\def\urltilda{\kern -.15em\lower .7ex\hbox{\~{}}\kern .04em}  
\newcommand{\mC}{\ensuremath{\mathcal{C}}}                    
\newcommand{\st}{\ensuremath{\colon\,}}                       
\newcommand{\dcup}{\ensuremath{\,\mathaccent\cdot\cup\,}}     
\newcommand{\ddd}{\ensuremath{\dcup \cdots \dcup}}            
\newcommand{\gprod}{\ensuremath{\,\square\,}}                 
\DeclareMathOperator{\Hom}{Hom}                               
\DeclareMathOperator{\Ind}{Ind}                               
\DeclareMathOperator{\conn}{conn}                             
\numberwithin{figure}{section}
\numberwithin{equation}{section}
\newtheorem{theorem}{Theorem}[section]
\newtheorem{lemma}[theorem]{Lemma}
\newtheorem{proposition}[theorem]{Proposition}
\newtheorem{corollary}[theorem]{Corollary}
\newtheorem{conjecture}[theorem]{Conjecture}
\theoremstyle{definition}
\newtheorem{definition}[theorem]{Definition}
\newtheorem{remark}[theorem]{Remark}
\newtheorem{example}[theorem]{Example}
\newtheorem{question}[theorem]{Question}
\newtheorem*{acknowledgement}{Acknowledgement}
\newcommand{\twodigit}[1]{\INTVAL=#1\relax\ifnum\INTVAL<10 0\fi\the\INTVAL}
\newcommand\rightnow{
           \twodigit{\the\HOUR}:\twodigit{\the\MINUTE},
           \twodigit{\number\day}.\space
           \ifcase\month\or January\or February\or March\or April\or May\or June\or July\or August\or September\or October\or November\or December\fi
           \space\number\year}
\begin{document}


\title{Nested colourings of graphs}
\author[D.\ Cook II]{David Cook II}
\address{Department of Mathematics, University of Notre Dame, Notre Dame, IN 46556, USA}
\email{\href{mailto:dcook8@nd.edu}{dcook8@nd.edu}}
\subjclass[2010]{05C15 (05C76)}
\keywords{Proper vertex colouring, chromatic number, nested neighbourhoods}

\begin{abstract}
    A proper vertex colouring of a graph is \emph{nested} if the vertices of each of its colour
    classes can be ordered by inclusion of their open neighbourhoods.  Through a relation to
    partially ordered sets, we show that the nested chromatic number can be computed in polynomial time.

    Clearly, the nested chromatic number is an upper bound for the chromatic number of a graph.
    We develop multiple distinct bounds on the nested chromatic number using common properties of
    graphs.  We also determine the behaviour of the nested chromatic number under several graph
    operations, including the direct, Cartesian, strong, and lexicographic product.  Moreover, we
    classify precisely the possible nested chromatic numbers of graphs on a fixed number of vertices
    with a fixed chromatic number.
\end{abstract}

\maketitle

\section{Introduction}\label{sec:introduction}

Let $G$ be a finite simple graph on vertex set $V(G)$ and with edge set $E(G)$.
A partition $\mC = C_1 \ddd C_k$ of the vertices is a proper vertex colouring of
$G$ if the $C_i$ are independent sets.  The chromatic number $\chi(G)$ is the least
cardinality of a proper vertex colouring of $G$.  It is well-known that computing
the chromatic number of a graph is $NP$-complete.

We define a novel colouring of a graph $G$.  In particular, a proper vertex
colouring of $G$ is \emph{nested} if the vertices of each of its colour classes can be
ordered by inclusion of their open neighbourhoods.  The \emph{nested chromatic number}
$\chi_N(G)$ is the least cardinality of a nested colouring of $G$.
The nested chromatic number clearly bounds the chromatic number from above.
Through a connection to partially ordered sets, we prove that computing the
nested chromatic number of a graph can be done in polynomial time (Theorem~\ref{thm:poly-time}).

The concept of a nested colouring is extended to simplicial complexes in~\cite{Co-UFI}.
Using Proposition~\ref{pro:clique-exchange} it is shown that the nested chromatic number of
the underlying graph of a simplicial complex bounds from  below the nested chromatic number
of the complex itself.  Moreover, a new face ideal for a simplicial complex with respect to a
colouring is defined therein.  It is the nested colourings of simplicial complexes that give
rise to ideals which have minimal linear resolutions supported on a cubical complex.

Herein we consider the properties of nested colourings and the nested chromatic number.
This note is organised as follows.  In Section~\ref{sec:nested} we introduce the relevant
new definitions.  In particular, we define the weak duplicate preorder which provides two
distinct interpretations of nested colourings (Propositions~\ref{pro:clique-exchange}
and~\ref{pro:chain-covers}).  Through the latter interpretation we see that the nested 
chromatic number is related to the Dilworth number of a graph (Remark~\ref{rem:dilworth}).

In Section~\ref{sec:families} we classify the structure of graphs
with nested chromatic number $2$ (Theorem~\ref{thm:bip}).  Further, we study the nested chromatic number of regular
graphs and diamond- and $C_4$-free graphs therein.  In Section~\ref{sec:behaviour} we consider the
behaviour of the nested chromatic number under many common operations, including: Mycielski's construction,
the disjoint union, the join, the direct product, the Cartesian product, the strong product, and the composition
or lexicographic product.  In Section~\ref{sec:exists} we provide a classification of the triples $(\#V(G), \chi(G), \chi_N(G))$
that can occur for some graph $G$ (Theorem~\ref{thm:possible-chi-chi-s}).

For standard definitions not given here and for more examples, we refer the reader to any
standard graph theory textbook, e.g., \cite{West}.

\section{Nested colourings}\label{sec:nested}

In this section, we introduce three new concepts:  nested colourings, the de-duplicate
graph, and the weak duplicate preorder.

\subsection{Nested colourings \& the nested chromatic number}\label{sub:nested}~

We first define a nested neighbourhood condition on vertices of a finite simple graph.
We use $N_G(u) = \{v \st \{u,v\} \in E(G)\}$ to denote the open neighbourhood of
$u$ in $G$ and $N_G[u] = N_G(u) \dcup \{u\}$ to denote the closed neighbourhood of $u$ in $G$.

\begin{definition}\label{def:dup}
    Let $G$ be a finite simple graph, and let $u, v$ be vertices of $G$.  The vertex $u$ is a
    \emph{weak duplicate} of $v$ if $N_G(u) \subset N_G(v)$; if equality holds, then $u$ is a
    \emph{duplicate} of $v$.  Further, a \emph{duplicate-free graph} is a finite simple graph
    for which no pair of vertices are duplicates.  An independent set $I$ of $G$ is
    \emph{nested} if the vertices of $I$ can be linearly ordered so that $v \leq u$ implies
    $u$ is a weak duplicate of $v$.
\end{definition}

The order on the vertices of a nested independent set is unique, up to permutations
of duplicates.  This will be formalised in Section~\ref{sub:order}.

Using this condition on the neighbourhoods, we define a novel proper vertex colouring
of a graph.

\begin{definition}\label{def:nested}
    Let $G$ be a finite simple graph, and let $\mC$ be a proper vertex $k$-colouring
    $C_1 \ddd C_k$ of $G$.  If every colour class of $\mC$ is nested, then $\mC$ is a
    \emph{nested colouring} of $G$.  The \emph{nested chromatic number} $\chi_N(G)$ is
    the least cardinality of a nested colouring of $G$.  Moreover, the graph $G$ is
    \emph{colour-nested} if $\chi_N(G) = \chi(G)$.
\end{definition}

\begin{example}\label{exa:first-example}
    \begin{figure}[!ht]
        \includegraphics[scale=0.75]{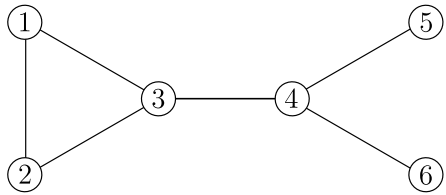}
        \caption{A graph $G$ with $\chi(G) = 3$ and $\chi_N(G) = 4$.}
        \label{fig:first-example}
    \end{figure}
    Let $G$ be the graph in Figure~\ref{fig:first-example}.  The partition
    $\{1,4\} \dcup \{2\} \dcup \{3,5,6\}$ is an optimal proper vertex $3$-colouring
    of $G$.  However, since $N_G(1) = \{2,3\}$ and $N_G(4) = \{3,5,6\}$, i.e.,
    the independent set $\{1, 4\}$ is not nested, the $3$-colouring is not nested.  
    Indeed, all proper vertex $3$-colourings of $G$ are not nested.  However, the
    proper vertex $4$-colouring $\{1\} \dcup \{2\} \dcup \{3, 5, 6\} \dcup \{4\}$ is 
    nested; indeed, $N_G(5) = N_G(6) = \{4\} \subset N_G(3) = \{1,2,4\}$.  Notice
    that the vertices $5$ and $6$ are duplicates.  Finally, as
    $\chi(G) = 3 < \chi_N(G) = 4$, we see that $G$ is not colour-nested.
\end{example}

We notice that isolated vertices are ``ignorable.''

\begin{remark}\label{rem:isolated-vertices}
    Isolated vertices are exactly those vertices that have an empty open neighbourhood.  Since the
    empty set is a subset of \emph{every} set, isolated vertices are weak duplicates of \emph{every}
    vertex of a graph.  Thus isolated vertices can be put in to \emph{any} colour class without
    modifying the nesting of the colour class.
\end{remark}

We also have a pair of immediate bounds on the nested chromatic number.

\begin{remark}\label{rem:singleton}
    Since every nested colouring of a finite simple graph $G$ is a proper colouring of $G$,
    we clearly have $\chi(G) \leq \chi_N(G)$.  Moreover, $\chi_N(G) \leq \#V(G)$ as the
    singleton colouring $\dcup_{v \in V(G)} \{v\}$ is nested.
\end{remark}

Graphs with very small or very large chromatic number are colour-nested.

\begin{lemma}\label{lem:1-n1-n}
    Let $G$ be a finite simple graph on $n$ vertices.  If $\chi(G) \in \{1, n-1, n\}$, where
    $n \geq 2$, then $\chi_N(G) = \chi(G)$, i.e., $G$ is colour-nested.
\end{lemma}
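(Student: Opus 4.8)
The plan is to verify the claim separately for each of the three values of $\chi(G)$, since the structural situation is quite different in each case.

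\medskip

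The case $\chi(G) = 1$ is immediate: a graph has chromatic number $1$ exactly when it has at least one vertex and no edges, so every vertex is isolated. By Remark~\ref{rem:isolated-vertices}, all vertices are mutually weak duplicates, so the single colour class $V(G)$ is nested; hence $\chi_N(G) = 1 = \chi(G)$.

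\medskip

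The case $\chi(G) = n$ is equally clean: $\chi(G) = n$ forces $G$ to be the complete graph $K_n$, since every vertex must receive its own colour. In $K_n$ every open neighbourhood is $N_G(v) = V(G) \setminus \{v\}$, and for distinct $u,v$ neither $N_G(u) \subset N_G(v)$ nor the reverse holds, so every colour class in any colouring is a singleton and the singleton colouring of Remark~\ref{rem:singleton} is forced. Thus $\chi_N(G) = n = \chi(G)$.

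\medskip

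The substantive case is $\chi(G) = n-1$, and this is where I expect the main obstacle. Here $G$ is one edge short of being a clique in colouring terms: there is an optimal proper $(n-1)$-colouring in which exactly one colour class is a pair $\{u,v\}$ of non-adjacent vertices and every other class is a singleton. It suffices to show that this pair can be chosen to be nested, i.e.\ that $G$ possesses two non-adjacent vertices $u,v$ with $N_G(u) \subset N_G(v)$ or $N_G(v) \subset N_G(u)$; then recolouring so that $\{u,v\}$ is the lone doubled class produces a nested $(n-1)$-colouring, giving $\chi_N(G) \le n-1$, and the reverse inequality $\chi_N(G) \ge \chi(G) = n-1$ is Remark~\ref{rem:singleton}. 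The difficulty is that a priori the only independent pairs of $G$ might be non-nested, as Example~\ref{exa:first-example} shows can happen in general. I would argue that this cannot occur when $\chi(G) = n-1$: the condition $\chi(G) = n-1$ means $G$ has independence number $\alpha(G) = 2$ (any independent triple would yield a colouring with fewer than $n-1$ colours, and an edgeless graph would have too small a chromatic number), so $G$ is the complement of a triangle-free graph. I would then take any independent pair $\{u,v\}$ and analyse the neighbourhoods directly: since $\{u,v\}$ is independent, $u \notin N_G(v)$ and $v \notin N_G(u)$, and every other vertex $w$ is adjacent to at least one of $u,v$ because $\{u,v,w\}$ cannot be independent. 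If some independent pair already nests we are done; otherwise for \emph{every} independent pair there exist private neighbours witnessing incomparability, and I would derive a contradiction by a counting or extremal argument on the triangle-free complement, choosing the independent pair whose neighbourhood is set-theoretically extremal (say $u$ with $N_G(u)$ minimal among the ends of non-edges) to force the containment. Carrying out this extremal selection carefully is the crux of the argument.
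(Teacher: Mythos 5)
Your cases $\chi(G)=1$ and $\chi(G)=n$ are correct and essentially identical to the paper's treatment. The problem is the case $\chi(G)=n-1$, which you rightly identify as the substantive one but do not actually prove: you explicitly defer the crux (``carrying out this extremal selection carefully'') to an unspecified counting argument, and the reduction you set up for it is too weak to ever deliver it. You replace the hypothesis $\chi(G)=n-1$ by the consequence $\alpha(G)=2$ (equivalently, the complement is triangle-free), and then hope to derive a contradiction from ``every independent pair is incomparable'' using only that. This cannot work: $C_5$ has $\alpha(C_5)=2$ and triangle-free complement, yet \emph{no} independent pair of $C_5$ is nested --- it is $2$-regular with girth $5$, so it is duplicate-free and $\chi_N(C_5)=5$ (Proposition~\ref{pro:regular}). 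Of course $\chi(C_5)=3=n-2$, so $C_5$ does not contradict the lemma, but it does show that any argument quantified only over graphs with triangle-free complement must fail; the full strength of $\chi(G)=n-1$ has to enter.

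The missing idea is that $\chi(G)=n-1$ forbids not only independent triples but also two \emph{disjoint} non-edges: if $\{u,v\}$ and $\{x,y\}$ were disjoint non-edges, colouring each pair with one colour and all remaining vertices with singletons would give an $(n-2)$-colouring. Since the non-edges pairwise intersect and cannot form a triangle of non-edges (that would be an independent triple), all non-edges share a single common vertex $v$; that is, $G$ is $K_n$ with a nonempty star of edges at $v$ removed. This is exactly the structural fact the paper uses, and from it the nested pair is immediate: for any $u$ non-adjacent to $v$ one has $N_G(u)=V(G)\setminus\{u,v\}\supseteq N_G(v)$, so $v$ is a weak duplicate of $u$, the class $\{u,v\}$ together with singletons is a nested $(n-1)$-colouring, and Remark~\ref{rem:singleton} gives the reverse inequality. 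So your outline has the right target but a genuine gap where the paper has a short structural argument; patching your version amounts to proving this star structure, at which point the extremal machinery you propose is unnecessary.
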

\begin{proof}
    Suppose that $\chi(G) = 1$.  Hence $E(G) = \emptyset$ and every vertex is a duplicate of every
    other vertex by Remark~\ref{rem:isolated-vertices}.  Thus the set $V(G)$ is a nested colouring
    of $G$ and $\chi_N(G) = \chi(G)$.

    Suppose that $\chi(G) = n-1$, where $n \geq 2$.  Thus $G$ is $K_n$ with a nonempty subset of
    the edges connected to some vertex, say, $v$, removed.  Let $u$ be a vertex nonadjacent to $v$.
    Thus $N_G(u) = V(G) \setminus \{u, v\}$ contains $N_G(v)$, and $v$ is a weak duplicate of $u$.
    Hence $\{u, v\}$ is a nested independent set of $G$ and so $\chi_N(G) \leq n-1$.  By the preceding
    remark we thus have $\chi_N(G) = \chi(G)$.

    Suppose that $\chi(G) = n$. By Remark~\ref{rem:singleton} we have
    $\chi(G) = \chi_N(G) = \#V(G)$.
\end{proof}

Moreover, the upper bound in Remark~\ref{rem:singleton} is sometimes attained by graphs with small
chromatic number.

\begin{example}\label{exa:petersen}
    Let $P$ be the Petersen graph; see Figure~\ref{fig:petersen}.  It is well-known that
    $\chi(P) = 3$.  However, since no vertex of $P$ is a weak duplicate of another vertex
    of $P$, $\chi_N(P) = 10 = \#V(P)$.
    \begin{figure}[!ht]
        \includegraphics[scale=1]{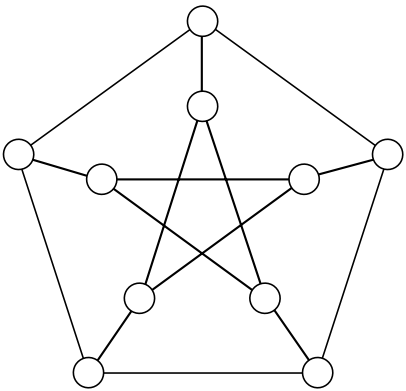}
        \caption{The Petersen graph.}
        \label{fig:petersen}
    \end{figure}
\end{example}

\begin{remark}\label{rem:sperner}
    Recall that a Sperner family is a collection of sets in which no set is a subset of another.
    Thus for a finite simple graph $G$, $\chi_N(G) = \#V(G)$ if and only if the set of open
    neighbourhoods of vertices of $G$ forms a Sperner family.  For example, the set of open
    neighbourhoods of vertices of the Petersen graph form a Sperner family.
\end{remark}

\subsection{The de-duplicate graph}\label{sub:de-dup}~

We now define a derivative graph based on the equivalence relation of duplicates.

\begin{definition}\label{def:de-dup}
    Let $G$ be a finite simple graph.  Define $\sim$ to be the equivalence relation of duplicates
    of $G$, and let $[\cdot]_\sim$ denote an equivalence class of this relation.  The
    \emph{de-duplicate graph} of $G$ is the graph $G^\star$ with vertices given by the equivalence
    classes $[v]_\sim$ for $v \in V(G)$ and with an edge between $[u]_\sim$ and $[v]_\sim$ if and only if
    $(u, v) \in E(G)$.
\end{definition}

Notice that $G \cong G^\star$ if and only if $G$ is duplicate-free.

Passing to the de-duplicate graph of $G$ does not change the nested chromatic number.

\begin{proposition}\label{pro:de-dup}
    If $G$ is a finite simple graph, then $\chi_N(G) = \chi_N(G^{\star})$.
\end{proposition}
\begin{proof}
    Suppose $\mC$ is the nested colouring $C_1 \ddd C_{k}$ of $G$.  For $1 \leq i \leq k$, let
    \[
        C'_i = \{[v]_{\sim} \st v \in C_i\} \setminus \cup_{j=1}^{i-1} C'_{j}.
    \]
    By construction, $C'_1 \ddd C'_{k}$ is a partition $\mC'$ of $V(G^\star)$.  Since $C_i$ is
    an independent set, $C'_i$ is as well.  Hence $\mC'$ is a proper colouring of $G^\star$.  Moreover,
    since $C_i$ is nested, $C'_i$ is nested with the order on the vertices of $C'_i$ induced by the order
    of the vertices of $C_i$, and so $\mC'$ is a nested colouring of $G^\star$.  Thus $\chi_N(G) \geq \chi_N(G^\star)$.

    On the other hand, suppose $\mathcal{D}$ is the nested colouring $D_1 \ddd D_{r}$ of $G^\star$.
    For $1 \leq i \leq r$, let $D'_i = \{v \st v \in V(G) \mbox{~and~} [v]_{\sim} \in D_i\}$.  Since
    $\mathcal{D}$ is a partition of $V(G^\star)$, $D'_1 \ddd D'_r$ is a partition $\mathcal{D}'$ of
    $V(G)$.  Moreover, since $D_i$ is an independent set, $D'_i$ is as well.  Hence $\mathcal{D}'$ is
    a proper colouring of $G$.  Since each $D_i$ is nested, $D'_i$ is nested with the order on the vertices
    of $D'_i$ induced by the order of the vertices of $D_i$, where the order on duplicate vertices is
    arbitrary.  Hence $\mathcal{D}'$ is a nested colouring of $G$.  Thus $\chi_N(G^\star) \geq \chi_N(G)$.
\end{proof}

Since a complete graph is the de-duplicate of a complete multipartite graph and the Tur\'an graph,
then its nested chromatic number is simple to compute.

\begin{corollary}\label{cor:complete-multi-partite}
    If $n_1, \ldots, n_r$ are positive integers, then $\chi_N(K_{n_1, \ldots, n_r}) = \chi(K_{n_1, \ldots, n_r}) = r$.
\end{corollary}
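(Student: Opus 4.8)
The plan is to apply Proposition~\ref{pro:de-dup} together with the well-known value of the chromatic number of a complete multipartite graph. First I would observe that the chromatic number of $K_{n_1, \ldots, n_r}$ is $r$: each of the $r$ parts is an independent set that must receive its own colour (since any two vertices in distinct parts are adjacent), and colouring each part with a single colour yields a proper $r$-colouring. This gives $\chi(K_{n_1, \ldots, n_r}) = r$, which is the rightmost equality in the statement.

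Next I would identify the de-duplicate graph of $K_{n_1, \ldots, n_r}$. Within a single part $V_i$, any two vertices $u, v$ have the same open neighbourhood, namely $N(u) = N(v) = V(G) \setminus V_i$, so all vertices in the same part are duplicates of one another; conversely, vertices in distinct parts have distinct neighbourhoods and are adjacent. Hence the equivalence classes of $\sim$ are exactly the $r$ parts, and the de-duplicate graph $(K_{n_1, \ldots, n_r})^\star$ has $r$ vertices with an edge between every pair of classes, i.e., it is $K_r$.

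By Proposition~\ref{pro:de-dup}, we then have $\chi_N(K_{n_1, \ldots, n_r}) = \chi_N((K_{n_1, \ldots, n_r})^\star) = \chi_N(K_r)$. Finally I would compute $\chi_N(K_r)$ directly: since $K_r$ has no two vertices with nested neighbourhoods (its neighbourhoods form a Sperner family, cf.\ Remark~\ref{rem:sperner}), every colour class of a nested colouring is a singleton, forcing $\chi_N(K_r) = r$. Alternatively, this follows immediately from Lemma~\ref{lem:1-n1-n} applied with $\chi(K_r) = r = \#V(K_r)$ (the case $\chi(G) = n$). Chaining these equalities yields $\chi_N(K_{n_1, \ldots, n_r}) = r = \chi(K_{n_1, \ldots, n_r})$.

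There is essentially no obstacle here, as the statement is phrased as a corollary to Proposition~\ref{pro:de-dup}; the only point requiring a sliver of care is verifying that the equivalence classes of the duplicate relation on $K_{n_1, \ldots, n_r}$ are precisely the parts, which is immediate from computing the open neighbourhoods. The remark about the Tur\'an graph in the text signals that the same argument applies verbatim to any complete multipartite graph regardless of the part sizes, since the de-duplicate graph depends only on the number of parts and not on their cardinalities.
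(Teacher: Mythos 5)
Your proposal is correct and follows the same route as the paper: the paper justifies this corollary precisely by noting that the complete graph $K_r$ is the de-duplicate graph of $K_{n_1,\ldots,n_r}$ and invoking Proposition~\ref{pro:de-dup}, exactly as you do. Your explicit verification that the duplicate classes are the parts, and the computation $\chi_N(K_r)=r$ via Lemma~\ref{lem:1-n1-n}, simply fill in details the paper leaves implicit.
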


\begin{corollary}\label{cor:Turan}
    If $T_{n,r}$ is the Tur\'an graph on $n$ vertices that is $r$-partite, then $\chi_N(T_{n,r}) = \chi(T_{n,r}) = r$.
\end{corollary}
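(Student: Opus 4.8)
The plan is to recognise $T_{n,r}$ as a special case of the complete multipartite graphs already treated, so that Corollary~\ref{cor:complete-multi-partite} applies at once. By definition, $T_{n,r}$ is the complete $r$-partite graph on $n$ vertices whose parts are as equal in size as possible: writing $n = qr + s$ with $0 \leq s < r$, exactly $s$ of the parts have size $q+1$ and the remaining $r - s$ have size $q$. Provided $n \geq r$, so that $q \geq 1$ and every part is nonempty, this exhibits $T_{n,r}$ as $K_{n_1,\ldots,n_r}$ for positive integers $n_1,\ldots,n_r$, whence $\chi_N(T_{n,r}) = \chi(T_{n,r}) = r$ follows immediately from Corollary~\ref{cor:complete-multi-partite}.

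Should one prefer to argue from the de-duplicate construction directly, as the remark preceding these corollaries suggests, I would proceed as follows. Two vertices lying in the same part of $T_{n,r}$ are nonadjacent and share the open neighbourhood consisting of all the other parts, so they are duplicates; two vertices in distinct parts are adjacent and hence are not duplicates. Thus the equivalence classes of the duplicate relation $\sim$ are exactly the $r$ parts, and since vertices from different parts are adjacent, the de-duplicate graph $(T_{n,r})^\star$ is the complete graph $K_r$. Proposition~\ref{pro:de-dup} then gives $\chi_N(T_{n,r}) = \chi_N(K_r)$, and Remark~\ref{rem:singleton}, or Lemma~\ref{lem:1-n1-n} applied with $\chi(K_r) = r = \#V(K_r)$, yields $\chi_N(K_r) = r$.

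Once the identification is in place the argument carries no real difficulty; the only point demanding care is the degenerate regime $n < r$. There an $r$-partite Tur\'an graph is forced to have empty parts, $T_{n,r}$ collapses to $K_n$, and one finds $\chi_N = \chi = n$ rather than $r$. I would therefore read the corollary under the standing hypothesis $n \geq r$, which guarantees that all $r$ parts are nonempty and that the de-duplicate graph is genuinely $K_r$; this caveat, rather than any substantive step, is the principal thing to get right.
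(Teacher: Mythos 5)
Your proposal is correct and matches the paper's own (implicit) argument: the paper derives this corollary from the observation that the de-duplicate graph of the Tur\'an graph is the complete graph $K_r$, so Proposition~\ref{pro:de-dup} applies, which is exactly your second paragraph (and your first paragraph, reducing to Corollary~\ref{cor:complete-multi-partite}, is the same argument one step removed, since that corollary rests on the identical de-duplication). Your caveat about the degenerate regime $n < r$ is a sensible reading convention but not a substantive addition; the paper tacitly assumes all $r$ parts are nonempty.
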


Moreover, duplicate-free graphs have been studied under various other names.

\begin{remark}\label{rem:dupfree}
    Duplicate-free graphs were studied by Sumner~\cite{Su} as ``point-determining graphs.''  Sumner
    showed that every connected point-determining graph has at least two vertices that can each be
    removed leaving point-determining induced subgraphs.

    They were also studied as ``mating graphs'' or ``$M$-graphs'' by Bull and Pease~\cite{BP} in order
    to understand mating-type systems.  In this case, vertices are identified with individuals in a
    population, and edges correspond to compatibility in mating.  Thus duplicate vertices correspond
    to individuals with identical mating compatibilities and so need not be represented.

    Kilibarda~\cite{Ki} proved a bijection between unlabeled (connected) mating graphs on $n$ vertices
    with unlabeled (connected) graphs without endpoints on $n$ vertices.  Thus~\cite[A004110]{OEIS}
    and~\cite[A004108]{OEIS} enumerate the number of unlabeled (connected) duplicate-free graphs on
    $n$ vertices.  We note that Kilibarda called the de-duplicate graph $G^\star$ the ``reduction of $G$.'

    Finally, duplicate-free graphs were used by McSorley~\cite{McS} as ``neighbourhood distinct graphs''
    to classify the neighbourhood anti-Sperner graphs, a related but distinct set of graphs.  A graph
    is \emph{neighbourhood anti-Sperner}, or \emph{NAS}, if every vertex is weakly duplicated by some
    other vertex.  Porter~\cite{Po} introduced the concept of NAS graphs, and showed that every NAS
    graph has a pair of duplicate vertices.  Porter and Yucas~\cite{PY} established more properties of
    NAS graphs.
\end{remark}

\subsection{The weak duplicate preorder}\label{sub:order}~

We next define a preorder on the vertices of a graph using the concept of weak duplicates.
It is particularly important to notice that the preorder is in the \emph{reverse} order of
containment.

\begin{definition}\label{def:weak-dup-preorder}
    Let $G$ be a finite simple graph.  The \emph{weak duplicate preorder} on $V(G)$
    is the preorder defined by $v \leq u$ if $u$ is a weak duplicate of $v$.
\end{definition}

Exchanging a vertex of a clique for a lesser vertex in the preorder generates another
clique of the graph.

\begin{lemma}\label{lem:clique-exchange}
    Let $G$ be a finite simple graph, and let $C$ be a clique of $G$.  If $u$ is a
    vertex of $C$, and $v \leq u$ under the weak duplicate preorder on $V(G)$, then
    $(C \dcup \{v\}) \setminus \{u\}$ is a clique of $G$.
\end{lemma}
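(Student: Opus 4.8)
The plan is to verify directly that every pair of vertices in the exchanged set $C' = (C \dcup \{v\}) \setminus \{u\}$ is joined by an edge. First I would dispose of the degenerate situations: if $v = u$ or $v \in C$, then $C'$ is contained in $C$ and is therefore automatically a clique, so I may assume $v \notin C$, in which case $C' = (C \setminus \{u\}) \cup \{v\}$ is genuinely obtained from $C$ by swapping $u$ for $v$.

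Next I would split the pairs of vertices of $C'$ into two kinds. Pairs lying entirely inside $C \setminus \{u\}$ are adjacent because they already lie in the clique $C$. The only remaining pairs have the form $\{v, w\}$ with $w \in C \setminus \{u\}$, and producing these edges is the actual content of the lemma.

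The crux is the orientation of the preorder. Because $w$ and $u$ are distinct vertices of the clique $C$, they are adjacent, so $w \in N_G(u)$. The hypothesis $v \leq u$ says that $u$ is a weak duplicate of $v$, that is, $N_G(u) \subseteq N_G(v)$; hence $w \in N_G(v)$, so $v$ is adjacent to $w$. Applying this to every $w \in C \setminus \{u\}$ shows that $v$ is adjacent to all the other vertices of $C'$, and together with the previous paragraph this proves $C'$ is a clique. The one point to keep straight, as the authors themselves flag, is that a \emph{lesser} vertex in the weak duplicate preorder has a \emph{larger} neighbourhood, so exchanging $u$ for a lesser $v$ can only enlarge the set of vertices that the exchanged vertex sees; once this reversal is noted, no real obstacle remains and the argument is pure bookkeeping.
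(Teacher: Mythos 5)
Your proof is correct and follows essentially the same route as the paper: the other vertices of $C$ lie in $N_G(u)$, which by $v \leq u$ is contained in $N_G(v)$, so $v$ is adjacent to all of them. The paper's version is just a terser statement of this containment argument; your extra handling of the degenerate cases ($v = u$ or $v \in C$) and the explicit pair-by-pair check are harmless elaborations of the same idea.
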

\begin{proof}
    Since $N_G(u) \subset N_G(v)$, $C \subset N_G(u)$ implies $C \subset N_G(v)$.
    Thus $(C \dcup \{v\}) \setminus \{u\}$ is a clique of $G$.
\end{proof}

This gives an alternate condition on a partition of the vertices that is equivalent
to being a nested colouring.

\begin{proposition}\label{pro:clique-exchange}
    Let $G$ be a finite simple graph. If $\mC = C_1 \ddd C_k$ is a partition of $V(G)$,
    then the following conditions are equivalent:
    \begin{enumerate}
        \item $\mC$ is nested;
        \item there is an ordering on the vertices of each colour class $C_i$ such
            that if $v$ is less than $u$ in that order, and $K$ is a clique of $G$
            containing $u$, then $(K \dcup\{v\}) \setminus \{u\}$ is a clique of $G$;
            and
        \item there is an ordering on the vertices of each colour class $C_i$ such
            that if $v$ is less than $u$ in that order, and $\{u, w\}$ is an edge
            of $G$, then $\{v, w\}$ is an edge of $G$.
    \end{enumerate}
\end{proposition}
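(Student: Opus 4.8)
The plan is to prove the three conditions equivalent by the cyclic chain (i) $\Rightarrow$ (ii) $\Rightarrow$ (iii) $\Rightarrow$ (i). The unifying idea is that all three are different ways of saying that each colour class can be ordered so that a larger vertex may always be replaced by a smaller one. The engine for translating between the neighbourhood-containment formulation and the clique formulation is Lemma~\ref{lem:clique-exchange}, while passing from cliques to edges is just the observation that an edge is a clique on two vertices. Throughout I would use the weak duplicate preorder of Definition~\ref{def:weak-dup-preorder} as the bridge: $v \le u$ in that preorder exactly when $u$ is a weak duplicate of $v$.

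For (i) $\Rightarrow$ (ii), I would take the ordering on each $C_i$ furnished by nestedness, so that $v < u$ implies $u$ is a weak duplicate of $v$, i.e.\ $v \le u$ in the weak duplicate preorder. Then Lemma~\ref{lem:clique-exchange} applies verbatim: for any clique $K$ containing $u$, the set $(K \dcup \{v\}) \setminus \{u\}$ is again a clique. For (ii) $\Rightarrow$ (iii), I would keep the same ordering and specialise (ii) to the two-vertex clique $K = \{u, w\}$ arising from an edge $\{u, w\}$ with $v < u$; the exchanged set $(K \dcup \{v\}) \setminus \{u\}$ equals $\{v, w\}$, which being a clique is precisely the edge $\{v, w\}$. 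Here one must check $v \ne w$, which holds because $w \in N_G(u)$ while $u$ and $v$ lie in a common independent colour class, forcing $w \ne v$.

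The step (iii) $\Rightarrow$ (i) is where the real content sits, and it is the main obstacle. Given the ordering from (iii), I would first recover neighbourhood containment: for $v < u$ and any $w \in N_G(u)$, the edge $\{u, w\}$ yields the edge $\{v, w\}$ by (iii), so $w \in N_G(v)$; hence $N_G(u) \subseteq N_G(v)$ and $u$ is a weak duplicate of $v$. This shows the given ordering witnesses that each $C_i$ is nested in the sense of Definition~\ref{def:dup}. The delicate point is that I must also confirm each $C_i$ is an \emph{independent} set before calling $\mC$ a nested colouring, since (iii) is stated for an arbitrary partition. This is exactly where the simple-graph hypothesis is used: if two vertices $v < u$ of a common class were adjacent, then taking $w = v$ in (iii) would force $\{v, v\}$ to be an edge, which is impossible in a simple graph. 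Thus no class contains an edge, each $C_i$ is independent, and $\mC$ is a nested colouring, closing the cycle.
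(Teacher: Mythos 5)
Your overall route is the paper's: (i) $\Rightarrow$ (ii) via Lemma~\ref{lem:clique-exchange}, (ii) $\Rightarrow$ (iii) by specialising to two-vertex cliques, and (iii) $\Rightarrow$ (i) by unwinding the edge condition into the containment $N_G(u) \subseteq N_G(v)$. Your handling of (iii) $\Rightarrow$ (i) is in fact more scrupulous than the paper's own proof, which never mentions independence of the classes; your observation that putting $w = v$ in (iii) forbids an edge inside a class is correct, and it is exactly what makes each $C_i$ an independent set.

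The gap is in (ii) $\Rightarrow$ (iii). You justify $v \neq w$ ``because $u$ and $v$ lie in a common \emph{independent} colour class.'' But independence is not available there: the proposition starts from an arbitrary partition of $V(G)$, and in proving (ii) $\Rightarrow$ (iii) you may assume only (ii) --- as you yourself note later, when you stress that the classes in (iii) are not known to be independent and that independence must be \emph{derived}. So this justification is circular: independence is part of what the equivalence with (i) ultimately delivers, not a standing hypothesis. Nor is the point pedantic, because if the exchange set in (ii) is read with an ordinary union, the implication you are proving is actually false: for $G = K_2$ with the one-class partition $\{a, b\}$ and order $a < b$, every exchange produces the singleton clique $\{a\}$, so (ii) holds, while (iii) and (i) both fail. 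What saves the step is the disjoint-union symbol in condition (ii): asserting that $(K \dcup \{v\}) \setminus \{u\}$ is a clique presupposes $v \notin K$. Applying (ii) to the clique $K = \{u, w\}$ therefore already gives $v \notin \{u, w\}$, hence $v \neq w$; and applying it to $K = \{u, v\}$, should $u$ and $v$ be adjacent, yields a contradiction, which is how (ii) itself forces the classes to be independent. (The paper's one-line proof of this step glosses over the same point.) Replace your appeal to independence by this appeal to disjointness and the step --- and with it the whole proof --- is sound.
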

\begin{proof}
    Suppose that condition (i) holds.  Since each independent set $C_i$ is nested, the vertices
    of $C_i$ are comparable under the weak duplicate preorder.  If we arbitrarily order
    the duplicates in $C_i$, then the induced order on $C_i$ is the desired order for
    condition (ii) by Lemma~\ref{lem:clique-exchange}.

    Clearly, condition (ii) implies condition (iii), since edges are cliques of $G$.

    Suppose now that condition (iii) holds.  Since $\{u, w\} \in E(G)$ implies
    that $\{v, w\} \in E(G)$,  $N_G(u)$ is a subset of $N_G(v)$.  That is,
    the order on the vertices of $C_i$ respects the weak duplicate preorder, and so
    $C_i$ is nested.  In particular, condition (i) holds.
\end{proof}

When the graph is duplicate-free, the preorder is a partial order.

\begin{definition}\label{def:weak-dup-poset}
    Let $G$ be a duplicate-free finite simple graph.  The weak duplicate preorder on
    $G$ is then a partial order, and we write $P_G$ for the poset on $V(G)$ under
    the weak duplicate partial order induced by $G$.
\end{definition}

The key observation is that, when $G$ is duplicate-free, the chain covers of $P_G$ are
in bijection with the nested colourings of $G$.

\begin{proposition}\label{pro:chain-covers}
    Let $G$ be a duplicate-free finite simple graph.  A partition $C_1 \ddd C_k$ of
    $V(G)$ is a nested colouring of $G$ if and only if it is a chain cover of $P_G$.
\end{proposition}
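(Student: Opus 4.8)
The plan is to reduce both directions to a single structural fact about $P_G$ and then unwind the definitions, using duplicate-freeness only to replace the weak duplicate preorder by the genuine partial order $P_G$. The fact I would isolate first is that \emph{comparable vertices of $P_G$ are non-adjacent}, so that a chain of $P_G$ is automatically an independent set of $G$. Indeed, suppose $v \leq u$ in $P_G$, so that $N_G(u) \subset N_G(v)$. If $\{u,v\}$ were an edge of $G$, then $v \in N_G(u) \subset N_G(v)$, forcing $v \in N_G(v)$, which is impossible in a loopless simple graph. Hence no two comparable vertices are adjacent, and every chain of $P_G$ is an independent set. This is the observation that lets a purely order-theoretic object serve as a colour class.

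For the forward implication, I would assume that $\mC = C_1 \ddd C_k$ is a nested colouring and show each $C_i$ is a chain of $P_G$. By Definition~\ref{def:dup} each $C_i$ is a nested independent set, so its vertices admit a linear order in which $v$ below $u$ implies $u$ is a weak duplicate of $v$, that is $v \leq u$ in $P_G$. In particular any two vertices of $C_i$ are comparable in $P_G$, so $C_i$ is a chain; as the $C_i$ partition $V(G)$, the partition is a chain cover of $P_G$.

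For the reverse implication, I would assume $C_1 \ddd C_k$ is a chain cover of $P_G$ and verify both defining properties of a nested colouring. Each class $C_i$ is a chain, hence an independent set by the observation above, so the partition is a proper colouring of $G$; and since $G$ is duplicate-free the partial order $P_G$ coincides with the weak duplicate preorder, so the total order on the chain $C_i$ is exactly a linear order witnessing that $C_i$ is nested. Thus the partition is a nested colouring. The main obstacle is precisely the independence observation in the first paragraph: once chains are known to be independent the two directions are essentially a translation between ``totally ordered under the weak duplicate preorder'' and ``chain in $P_G$,'' with everything else being routine bookkeeping.
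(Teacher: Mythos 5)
Your proposal is correct and takes essentially the same approach as the paper, whose proof is a one-line unwinding of the definitions via the translation $N_G(u) \subset N_G(v)$ if and only if $v \leq u$. If anything, you are more explicit than the paper: your observation that comparable vertices of $P_G$ are non-adjacent (so every chain is automatically an independent set of $G$) is exactly what the reverse direction needs, but it is left implicit in the paper's proof.
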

\begin{proof}
    This follows from the definitions of a nested independent set and the weak duplicate partial order.
    In particular, $N_G(u) \subset N_G(v)$ if and only if $v \leq u$, and in
    both cases the sets of vertices form a partition of $V(G)$.
\end{proof}

Dilworth~\cite[Theorem~1.1]{Di} proved that the width (or Dilworth number) of a poset $P$, i.e.,
the maximum cardinality of an antichain of $P$, is precisely the minimum cardinality of a chain
cover of $P$.  Hence the nested chromatic number of a graph is the width of the poset of the
de-duplicate of the graph.

\begin{corollary}\label{cor:width}
    If $G$ is a finite simple graph, then $\chi_N(G)$ is the width of $P_{G^\star}$.
\end{corollary}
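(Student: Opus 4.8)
The plan is to assemble the corollary from the three ingredients already in hand: Proposition~\ref{pro:de-dup}, Proposition~\ref{pro:chain-covers}, and Dilworth's theorem. First I would pass to the de-duplicate graph. By Proposition~\ref{pro:de-dup} we have $\chi_N(G) = \chi_N(G^\star)$, so it suffices to compute $\chi_N(G^\star)$. This reduction is exactly what makes $P_{G^\star}$, rather than a would-be ``$P_G$'', appear in the statement, since the weak duplicate preorder is only a genuine partial order on a duplicate-free graph (Definition~\ref{def:weak-dup-poset}).

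The one point requiring verification is that $G^\star$ is itself duplicate-free, so that $P_{G^\star}$ is defined. I would check this directly from Definition~\ref{def:de-dup}: if $[u]_\sim$ and $[v]_\sim$ were distinct vertices of $G^\star$ with equal open neighbourhoods, then unwinding the definition of adjacency in $G^\star$, together with the fact that duplicates share an open neighbourhood, forces $N_G(u) = N_G(v)$; hence $u \sim v$ and $[u]_\sim = [v]_\sim$, a contradiction. With this in place, $P_{G^\star}$ is a poset by Definition~\ref{def:weak-dup-poset}.

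Next I would apply Proposition~\ref{pro:chain-covers} to the duplicate-free graph $G^\star$: the nested colourings of $G^\star$ are precisely the chain covers of $P_{G^\star}$. Consequently $\chi_N(G^\star)$, the least cardinality of a nested colouring, equals the least cardinality of a chain cover of $P_{G^\star}$. Finally, Dilworth's theorem identifies this minimum chain-cover size with the width of $P_{G^\star}$. Chaining the equalities gives $\chi_N(G) = \chi_N(G^\star) = \text{width}(P_{G^\star})$, as desired.

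I expect no serious obstacle: the statement is a corollary that stitches together the preceding results, and the only genuinely new verification is the short argument that $G^\star$ is duplicate-free, which is a routine consequence of collapsing the duplicate classes. If anything, the subtlety worth flagging is merely that Proposition~\ref{pro:chain-covers} has a duplicate-free hypothesis, which is why the passage to $G^\star$ must come first.
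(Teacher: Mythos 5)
Your proposal is correct and follows essentially the same route as the paper, which derives the corollary by combining Proposition~\ref{pro:de-dup}, Proposition~\ref{pro:chain-covers} applied to $G^\star$, and Dilworth's theorem. Your explicit check that $G^\star$ is duplicate-free (so that $P_{G^\star}$ is defined) is a point the paper leaves implicit, but it is a routine verification rather than a different approach.
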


\begin{remark}\label{rem:dilworth}
    Let $G$ be a finite simple graph.  A vertex $v$ of $G$ \emph{dominates} a vertex $u$ of $G$
    if $N_G(u) \subset N_G[v]$.  Notice the subtle difference between domination and weak duplication,
    namely, $u$ and $v$ may be adjacent in the former.  The \emph{Dilworth number of $G$} is the cardinality of the
    largest set of vertices of $G$ such that no vertex dominates any other in the set.

    Following Felsner, Raghavan, and Spinrad~\cite{FRS}, we partially order the vertices of a duplicate-free
    graph $G$ by $v \leq u$ if $v$ dominates $u$.  The width of this partial order is precisely the Dilworth
    number of the graph $G$.  This partial order is in the \emph{reverse} order of containment, as in the weak
    duplicate partial order.

    The Dilworth number of a graph is \emph{not} the nested chromatic number of the graph despite
    the similarities.  Recall that threshold graphs are precisely the graphs with Dilworth number $1$.
    In Corollary~\ref{cor:threshold} we classify the nested chromatic number of threshold graphs as one
    more than the number of domination steps in the construction of the graph.
\end{remark}

As a consequence, the nested chromatic number can be computed in polynomial time.

\begin{theorem}\label{thm:poly-time}
    The nested chromatic number of a finite simple graph on $n$ vertices can be computed in $O(n^3)$ time.
\end{theorem}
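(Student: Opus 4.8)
The plan is to reduce the computation of $\chi_N(G)$ to the computation of the width of a finite poset, which is a classical polynomial-time problem. By Corollary~\ref{cor:width}, $\chi_N(G)$ equals the width of $P_{G^\star}$, so it suffices to (a) construct the poset $P_{G^\star}$ from $G$ and (b) compute its width, each within the cubic bound.

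First I would build the weak duplicate preorder directly on $V(G)$. Representing each open neighbourhood $N_G(v)$ as its characteristic vector over $V(G)$, that is, as a row of the adjacency matrix of $G$, one tests a single containment $N_G(u) \subseteq N_G(v)$ in $O(n)$ time. Performing this test for each of the $O(n^2)$ ordered pairs $(u,v)$ records the entire weak duplicate preorder in $O(n^3)$ time. Two vertices are duplicates exactly when mutual containment holds, so the equivalence classes $[v]_\sim$, and hence the vertex set and the comparabilities of $P_{G^\star}$, can be read off from this data in $O(n^2)$ additional time. This produces the full order relation of $P_{G^\star}$ on some $m \leq n$ elements.

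Next I would compute the width of $P_{G^\star}$ using Dilworth's theorem~\cite{Di}: the width equals the minimum number of chains needed to cover the poset. For this I would invoke the standard reduction of minimum chain cover to bipartite matching. Form a bipartite graph $H$ with a left copy and a right copy of each of the $m$ elements, joining the left copy of $x$ to the right copy of $y$ whenever $x < y$ in $P_{G^\star}$; then a minimum chain cover has size $m - \mu$, where $\mu$ is the size of a maximum matching of $H$. Computing a maximum matching by repeated augmenting-path searches costs $O(|E(H)|) = O(n^2)$ per search and requires at most $m \leq n$ augmentations, hence $O(n^3)$ overall. Subtracting $\mu$ from $m$ then yields the width of $P_{G^\star}$, which by Corollary~\ref{cor:width} equals $\chi_N(G)$.

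The two cubic steps, tabulating all neighbourhood containments and computing the maximum matching, are the bottlenecks, and together they keep the whole computation within $O(n^3)$. The main point to verify is simply that the order relation of $P_{G^\star}$ (including the transitive comparabilities that the matching reduction requires) is obtained directly from the containment table without any separate, more expensive transitive-closure step; this is immediate, since $v \leq u$ in the weak duplicate preorder is defined by the containment $N_G(u) \subseteq N_G(v)$, and containment is already transitive as computed.
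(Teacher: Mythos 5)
Your proposal is correct and follows essentially the same route as the paper: reduce to the width of $P_{G^\star}$ via Corollary~\ref{cor:width}, build the containment table in $O(n^3)$ time, and compute the width through the Fulkerson-style reduction to bipartite matching. The only difference is cosmetic---you use elementary augmenting-path matching in $O(n^3)$ where the paper cites Hopcroft--Karp's $O(n^{5/2})$ algorithm, but since the poset construction is the bottleneck in both arguments, the overall bound is unchanged.
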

\begin{proof}
    Fulkerson~\cite{Fu} proved that computing the width of a poset is equivalent to computing the cardinality
    of a maximum matching of a related bipartite graph.  Hopcroft and Karp~\cite{HK} proved that computing
    the latter can be done in $O(n^{5/2})$ time.

    Computing the relations between the $n$ vertices corresponds to computing $\binom{n}{2}$ subset
    containments, where each subset has size at most $O(n)$.  Hence computing the poset structure
    on $P_{G^\star}$ takes $O(n^3)$ time.  Thus computing the nested chromatic number of a finite simple
    graph on $n$ vertices via the width of the weak duplicate partial order takes $O(n^3)$ time.
\end{proof}

\begin{remark}\label{rem:computability}
    Since the nested chromatic number of a graph is the width of an associated poset, existing tools
    can be used to compute the value for specific cases.  Indeed, the computer algebra system
    \emph{Macaulay2}~\cite{M2} handles posets with the package \emph{Posets}~\cite{CMW}, which can
    compute the width of a poset.  Furthermore, using the package \emph{Nauty}~\cite{Co-Nauty}, one can generate
    all the graphs on a small number of vertices (with specific restrictions, e.g., bipartite only, if desired).
    The latter package uses the software \texttt{nauty}~\cite{Mc} at its core.

    The ease of computing the nested chromatic number on all graphs of small size is very helpful when
    proving results such as Theorem~\ref{thm:possible-chi-chi-s}.
\end{remark}

The poset $P_G$ need not be unique; see Figure~\ref{fig:non-unique}.

\begin{figure}[!ht]
    \begin{minipage}[b]{0.32\linewidth}
        \centering
        \includegraphics[scale=0.75]{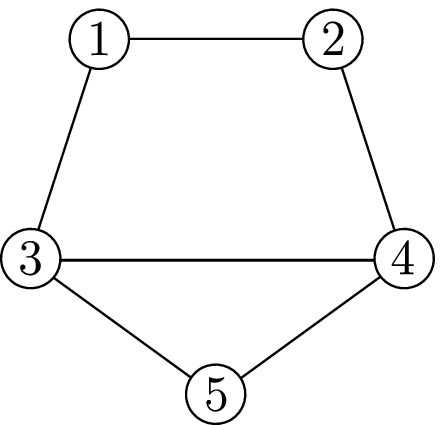}\\
        \emph{(i) The graph $G$.}
    \end{minipage}
    \begin{minipage}[b]{0.32\linewidth}
        \centering
        \includegraphics[scale=0.75]{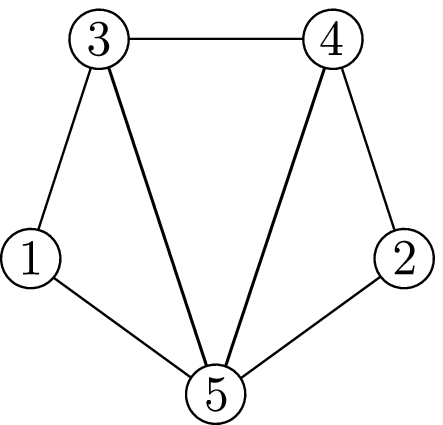}\\
        \emph{(ii) The graph $H$.}
    \end{minipage}
    \begin{minipage}[b]{0.32\linewidth}
        \centering
        \includegraphics[scale=0.75]{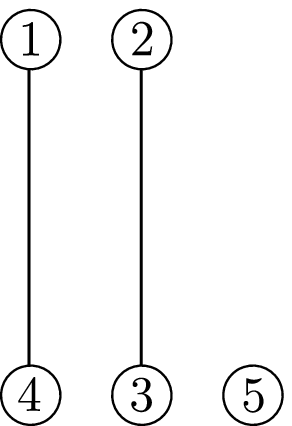}\\
        \emph{(iii) The poset $P_G = P_H$.}
    \end{minipage}
    \caption{The graphs $G$ and $H$ are non-isomorphic, but $P_G = P_H$.}
    \label{fig:non-unique}
\end{figure}

Furthermore, the poset need not be ranked; see Figure~\ref{fig:non-ranked}.

\begin{figure}[!ht]
    \begin{minipage}[b]{0.48\linewidth}
        \centering
        \includegraphics[scale=0.75]{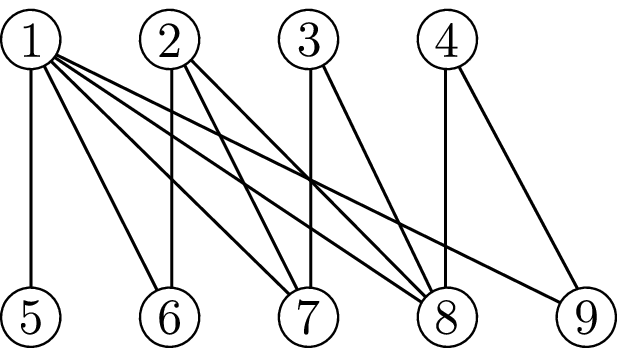}\\
        \emph{(i) The graph $G$.}
    \end{minipage}
    \begin{minipage}[b]{0.48\linewidth}
        \centering
        \includegraphics[scale=0.75]{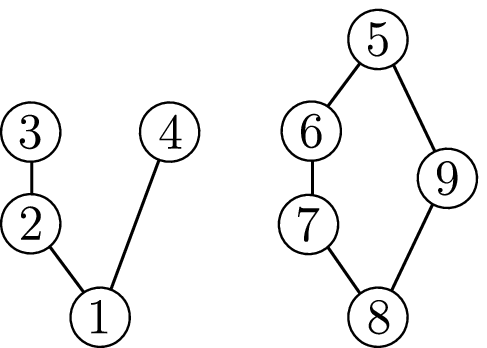}\\
        \emph{(ii) The poset $P_G$.}
    \end{minipage}
    \caption{The weak duplicate poset $P_G$ need not be ranked.}
    \label{fig:non-ranked}
\end{figure}

However, the height of the poset, i.e., the length of the longest chain, is restricted to at most
half the number of vertices.

\begin{proposition}\label{pro:height}
    If $G$ is a duplicate-free finite simple graph on $n$ vertices, then the height of $P_G$
    is at most $\left\lfloor \frac{n-1}{2} \right\rfloor$.
     That is, at most half of the vertices of $G$ can be in a nested independent set of $G$.
\end{proposition}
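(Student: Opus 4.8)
The plan is to extract, from a longest chain of $P_G$, one ``witness'' vertex for each consecutive step of the chain, all of them lying outside the chain itself; counting the chain vertices together with their witnesses will then force $n$ to be at least twice the number of chain vertices, minus one.

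First I would fix a chain $v_1 < v_2 < \cdots < v_m$ of $P_G$ of maximum size, so that the height of $P_G$ equals $m-1$. Unwinding the weak duplicate partial order (and using that $G$ is duplicate-free, so every comparability is strict), this chain is exactly a list of vertices with strictly nested open neighbourhoods $N_G(v_m) \subsetneq N_G(v_{m-1}) \subsetneq \cdots \subsetneq N_G(v_1)$. I would record at once that $\{v_1,\dots,v_m\}$ is independent: if $v_i$ and $v_j$ were adjacent with $i<j$, then $v_i \in N_G(v_j) \subseteq N_G(v_i)$, contradicting that $G$ is loopless. Next, for each $i$ with $1 \leq i \leq m-1$, the strict containment $N_G(v_{i+1}) \subsetneq N_G(v_i)$ lets me choose a witness $w_i \in N_G(v_i) \setminus N_G(v_{i+1})$.

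The crux of the argument, and the step I expect to require the most care, is to verify that the $m$ chain vertices and the $m-1$ witnesses are $2m-1$ pairwise distinct vertices. The key observation is that nesting pins down exactly which chain vertices each $w_i$ is adjacent to: since $w_i \in N_G(v_i) \subseteq N_G(v_j)$ for every $j \leq i$, while $w_i \notin N_G(v_{i+1}) \supseteq N_G(v_j)$ for every $j \geq i+1$, the vertex $w_i$ is adjacent to precisely $v_1,\dots,v_i$ among the chain. Distinct indices $i$ therefore yield distinct adjacency patterns, so the $w_i$ are pairwise distinct; and since each $w_i$ is adjacent to a chain vertex while $\{v_1,\dots,v_m\}$ is independent, no $w_i$ can coincide with any $v_j$.

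Finally I would count: the vertices $v_1,\dots,v_m,w_1,\dots,w_{m-1}$ form $2m-1$ distinct elements of $V(G)$, so $n \geq 2m-1$, whence $m \leq \lfloor (n+1)/2 \rfloor$ and the height $m-1 \leq \lfloor (n-1)/2 \rfloor$. Because a nested independent set of a duplicate-free graph is precisely a chain of $P_G$, the same count shows that such a set has at most $\lfloor (n+1)/2 \rfloor = \lceil n/2 \rceil$ vertices, which is the ``at most half'' assertion. The bound $n \geq 2m-1$ is sharp (an edge together with an isolated vertex gives $n=3$ and a nested independent set of size $2$), so no improvement is available from this counting alone.
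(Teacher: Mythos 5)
Your proof is correct and follows essentially the same route as the paper's: fix a longest chain, use the strict nesting of open neighbourhoods to produce height-many neighbours that are distinct from each other and from the (independent) chain vertices, and conclude $n \geq 2h+1$. The paper's version merely counts $\#N_G(v_0) \geq h$ for the largest neighbourhood directly, noting it is disjoint from the chain, rather than selecting explicit witnesses in each difference layer, but the counting argument is the same.
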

\begin{proof}
    Suppose the height of $P_G$ is $h$, i.e., there exist $h+1$ vertices $v_0, \ldots, v_h$ of $G$ such that
    $N_G(v_h) \subsetneq \cdots \subsetneq N_G(v_0)$.  This implies $N_G(v_0) \geq h$, and since $v_i \notin N_G(v_0)$
    for $0 \leq i \leq h$, $n \geq 2h+1$.  That is, $\frac{n-1}{2} \geq h$.
\end{proof}

\begin{example}\label{exa:not-weak-dup-poset}
    The preceding proposition implies that a poset with a large height relative to the number of vertices, e.g., a chain,
    cannot be the poset associated to a finite simple graph under the weak duplicate partial order.  However, there exist
    posets with small height that are also not associated to a finite simple graph.

    Consider the poset $P$ on $\{1,2,3,4\}$ with covering relations $1 < 2$, $1 < 3$, and $1 < 4$.
    This poset is not associated to a graph, as determined by a search of the $11$ graphs on $4$ vertices.
    Notice, however, that the dual of $P$ is the poset associated to the graph $K_3 \dcup K_1$.
\end{example}

\begin{question}\label{que:possible-posets}
    What posets are isomorphic to some $P_G$, where $G$ is a duplicate-free finite simple graph?
\end{question}

\section{Families of graphs}\label{sec:families}

We now look at three families of graphs with well-behaved nested chromatic numbers.

\subsection{Bipartite graphs}\label{sub:bip}~

Due to the structure of bipartite graphs it is possible to classify the graphs with
nested chromatic number $2$, i.e., colour-nested bipartite graphs.

\begin{theorem}\label{thm:bip}
    Let $r$ and $s$ be positive integers, and let $1 \leq a_r \leq \cdots \leq a_1 \leq s$ be
    a sequence of nonnegative integers.  Construct the graph $G = G_{a_1, \ldots, a_r; s}$ on
    the vertex set $\{u_1, \ldots, u_r, v_1, \ldots, v_s\}$ with an edge between $u_i$ and $v_j$
    if and only if $j \leq a_i$.  Then the following statements are true:
    \begin{enumerate}
        \item $\{u_1, \ldots, u_r\} \dcup \{v_1, \ldots, v_s\}$ is a nested colouring of $G$,
        \item $\chi_N(G) = \chi(G) = 2$, i.e., $G$ is colour-nested, and
        \item every nontrivial finite simple bipartite graph that is colour-nested arises this way.
    \end{enumerate}
\end{theorem}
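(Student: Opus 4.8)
The plan is to verify (i) and (ii) by directly reading off neighbourhoods, and then to prove the converse (iii) by reconstructing the parameters $a_1, \ldots, a_r; s$ from an arbitrary nested $2$-colouring. For (i), I first record the two families of neighbourhoods: from the defining adjacency $u_i \sim v_j$ exactly when $j \leq a_i$ we get $N_G(u_i) = \{v_1, \ldots, v_{a_i}\}$ and $N_G(v_j) = \{u_i \st a_i \geq j\}$. Because the sequence satisfies $a_r \leq \cdots \leq a_1$, the first family is totally ordered by inclusion, $N_G(u_r) \subseteq \cdots \subseteq N_G(u_1)$, and as $j$ increases the second family only loses elements, giving $N_G(v_s) \subseteq \cdots \subseteq N_G(v_1)$. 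Hence each of the two (clearly independent) colour classes is nested, which proves (i). For (ii), part (i) gives $\chi_N(G) \leq 2$; since $a_r \geq 1$ forces the edge $\{u_r, v_1\}$, the bipartite graph $G$ has $\chi(G) = 2$, and Remark~\ref{rem:singleton} then sandwiches $2 = \chi(G) \leq \chi_N(G) \leq 2$.

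The substance is (iii). Let $G$ be a nontrivial bipartite graph with $\chi_N(G) = \chi(G) = 2$, and fix a nested $2$-colouring with classes $U$ and $W$; since every edge joins $U$ to $W$, both classes are independent, and since the colouring is nested each class is nested. I order the vertices of each class by inclusion of neighbourhoods, writing $W = \{v_1, \ldots, v_s\}$ with $N_G(v_1) \supseteq \cdots \supseteq N_G(v_s)$ (breaking ties among duplicates arbitrarily) and $U = \{u_1, \ldots, u_r\}$ with $N_G(u_1) \supseteq \cdots \supseteq N_G(u_r)$. The key step is to show that each $N_G(u_i)$ is an \emph{initial segment} $\{v_1, \ldots, v_{a_i}\}$ of $W$, where $a_i = \#N_G(u_i)$: if $v_j \in N_G(u_i)$ and $j' < j$, then $u_i \in N_G(v_j) \subseteq N_G(v_{j'})$, so $v_{j'} \in N_G(u_i)$. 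This is exactly the adjacency rule that $u_i \sim v_j$ if and only if $j \leq a_i$, and the inclusion ordering of $U$ makes $a_1 \geq \cdots \geq a_r$, so $G$ is of the desired form once the parameter constraints are checked.

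It remains to secure $1 \leq a_r$ and $a_1 \leq s$ and to handle degenerate vertices. The inequality $a_1 \leq s$ is immediate since $N_G(u_1) \subseteq W$. The genuine obstacle is the requirement $a_r \geq 1$, for a priori $U$ may contain isolated vertices, whose empty neighbourhood would force some $a_i = 0$ and fall outside the allowed range. Here I invoke Remark~\ref{rem:isolated-vertices}: isolated vertices may be reassigned to either colour class without disturbing nestedness, so \emph{before} ordering I move every isolated vertex into $W$. After this normalisation $U$ consists only of vertices of positive degree, whence $a_r \geq 1$; because $G$ is nontrivial, the two endpoints of any edge remain split between $U$ and $W$, so neither class is emptied and $r, s \geq 1$. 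The isolated vertices now in $W$ have empty neighbourhood and therefore occupy the tail $v_{a_1+1}, \ldots, v_s$ of the ordering, matching the construction (which places isolated vertices only on the $v$-side). Thus $G = G_{a_1, \ldots, a_r; s}$, completing (iii). I expect this placement of isolated vertices — ensuring the normalisation lands precisely in the allowed parameter range with no isolated vertex on the $u$-side — to be the only delicate point; the initial-segment argument itself is a short consequence of one class being nested.
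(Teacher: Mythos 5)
Your proof is correct and follows essentially the same route as the paper's: verify the neighbourhoods $N_G(u_i) = \{v_1,\ldots,v_{a_i}\}$ for (i) and (ii), and for (iii) normalise isolated vertices into the $v$-class via Remark~\ref{rem:isolated-vertices}, order each class by inclusion of neighbourhoods, and read off the parameters $a_i$. The only difference is that you spell out the initial-segment argument (if $v_j \in N_G(u_i)$ and $j' < j$ then $v_{j'} \in N_G(u_i)$), a step the paper leaves implicit when it asserts $G$ arises from the construction.
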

\begin{figure}[!ht]
    \includegraphics[scale=0.9]{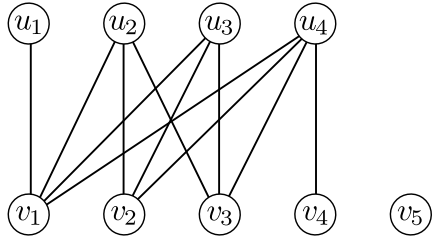}
    \caption{The colour-nested bipartite graph $G_{4,3,3,1;5}$.}
    \label{fig:bip-4331-5}
\end{figure}
\begin{proof}
    Let $U = \{u_1, \ldots, u_r\}$ and $V = \{v_1, \ldots, v_s\}$.

    By construction $N_G(u_i) = \{v_1, \ldots, v_{a_i}\}$ and hence $N_G(v_j) = \{u_i \st a_i \geq j\}$.
    Thus $U \dcup V$ is a colouring of $G$.  Since $a_1 \geq 1$, $u_1$ and $v_1$ are adjacent,
    hence $\chi(G) = 2$.  Since $a_{i+1} \leq a_i$, $N_G(u_{i+1})$ is a subset of $N_G(u_i)$ and 
    $N_G(v_{i+1})$ is a subset of $N_G(v_i)$.  Thus $U \dcup V$ is a nested colouring of $G$, and
    so $\chi_N(G) = 2$.  This completes parts (i) and (ii).

    Now let $G$ be any nontrivial finite simple bipartite graph that is colour-nested.  Suppose
    $\{u_1, \ldots, u_r\} \dcup \{v_1, \ldots v_s\}$ is a nested colouring of $G$, such that
    $N_G(u_{i+1}) \subset N_G(u_i)$ and $N_G(v_{i+1}) \subset N_G(v_i)$.  Furthermore, by
    Remark~\ref{rem:isolated-vertices} we may assume without loss of generality that any isolated
    vertices of $G$ are in $\{v_1, \ldots v_s\}$.  Set $a_i = \max\{j \st v_j \in N_G(u_i)\}$.
    Since $N_G(u_{i+1}) \subset N_G(u_i)$, $1 \leq a_r \leq \cdots \leq a_1 \leq s$.
    Thus $G$ arises as in the construction above, completing part (iii).
\end{proof}

The connected and duplicate-free colour-nested bipartite graphs have a simple classification.

\begin{corollary}\label{cor:bip-structure}
    Let $r$ and $s$ be positive integers, and let $1 \leq a_r \leq \cdots \leq a_1 \leq s$ be
    a sequence of nonnegative integers.  The following statements are true.
    \begin{enumerate}
          \item $G_{a_1, \ldots, a_r; s}$ is connected if and only if $a_1 = s$, and
        \item $G_{a_1, \ldots, a_r; s}$ is connected and duplicate-free if and only if $r = s$ and $a_i = i$, for $1 \leq i \leq r$.
    \end{enumerate}
\end{corollary}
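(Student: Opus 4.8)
The plan is to read everything off the explicit neighbourhoods already computed in the proof of Theorem~\ref{thm:bip}, namely $N_G(u_i) = \{v_1, \ldots, v_{a_i}\}$ and $N_G(v_j) = \{u_i \st a_i \geq j\}$, and then translate ``connected'' and ``duplicate-free'' into elementary conditions on the sequence $(a_i)$. For part (i) I would argue by contraposition on one side. If $a_1 < s$, then $a_i \leq a_1 < s$ for every $i$, so no $u_i$ is adjacent to $v_s$; hence $N_G(v_s) = \emptyset$ and $v_s$ is isolated. Since $a_1 \geq 1$ forces $s \geq 2$ in this case, the edge $\{u_1, v_1\}$ lies in another component and $G$ is disconnected. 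Conversely, if $a_1 = s$ then every $v_j$ with $1 \leq j \leq s$ satisfies $a_1 \geq j$, so $u_1$ is adjacent to all of $v_1, \ldots, v_s$, while every $u_i$ satisfies $a_i \geq 1$ and so is adjacent to $v_1$; thus every vertex lies within distance two of $u_1$ and $G$ is connected. This settles (i).

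For part (ii) I would assume $G$ is connected, so $a_1 = s$ and, having at least two vertices, $G$ has no isolated vertex. Because $N_G(u_i) \subseteq \{v_1,\ldots,v_s\}$ and $N_G(v_j) \subseteq \{u_1,\ldots,u_r\}$ are nonempty subsets of disjoint vertex sets, no $u_i$ can be a duplicate of any $v_j$, so duplicate-freeness splits into two independent conditions. On the $U$ side, $N_G(u_i) = N_G(u_{i'})$ holds exactly when $a_i = a_{i'}$, so (given $a_1 \geq \cdots \geq a_r$) duplicate-freeness means $a_1 > \cdots > a_r$; in particular the $a_i$ are $r$ distinct elements of $\{1,\ldots,s\}$, forcing $r \leq s$. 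On the $V$ side, monotonicity gives $N_G(v_j) = \{u_1,\ldots,u_{t_j}\}$ where $t_j = \#\{i \st a_i \geq j\}$; duplicate-freeness means the $t_j$ are pairwise distinct, and since no $v_j$ is isolated each $t_j$ lies in $\{1,\ldots,r\}$, so the $s$ distinct values $t_j$ force $s \leq r$. Hence $r = s$, and then $\{a_1,\ldots,a_r\}$, being an $r$-element subset of $\{1,\ldots,r\}$, is all of it; the decreasing order then pins down the sequence uniquely. The converse is a direct check: for this sequence one has $a_1 = s$ (connected), the $a_i$ distinct, and $t_j = s-j+1$ distinct, so $G$ is duplicate-free.

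I should flag, however, that the displayed conclusion ``$a_i = i$'' is incompatible with the standing hypothesis $a_1 \geq \cdots \geq a_r$ once $r \geq 2$, and the argument above instead produces the strictly decreasing staircase $a_i = s - i + 1$ (equivalently, the $a_i$ realise every value in $\{1,\ldots,r\}$ exactly once); presumably ``$a_i = i$'' is a typographical slip and is meant with the $a$-indices read in reverse. I do not expect any genuine difficulty in the argument itself, since it collapses to the two distinctness conditions together with a pigeonhole count. The one step that needs care is the $V$-side analysis: recognising that the invariant distinguishing $v_j$ is the \emph{count} $t_j = \#\{i \st a_i \geq j\}$ rather than the index $j$, and that distinctness of the $t_j$ is precisely what bounds $s$ above by $r$. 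Pairing this with the dual bound $r \leq s$ from the $U$ side is exactly what forces $r = s$, so this is where I would concentrate the write-up.
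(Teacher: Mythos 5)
Your proof is correct, and it rests on the same foundation as the paper's: read the neighbourhoods $N_G(u_i) = \{v_1, \ldots, v_{a_i}\}$ and $N_G(v_j) = \{u_i \colon a_i \geq j\}$ off the construction and translate connectivity and duplicate-freeness into conditions on the sequence; your part (i) is the paper's argument in only slightly different clothing. Where you genuinely diverge is in finishing part (ii). The paper's own proof is extremely terse: it records that $N_G(u_i) = N_G(u_j)$ iff $a_i = a_j$, asserts that duplicates among the $v$'s correspond to a gap $a_k - a_{k+1} \geq 2$, and then simply declares part (ii) complete, leaving the reader to deduce that unit gaps plus $a_1 = s$ force the staircase and $r = s$. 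You instead take the $V$-side invariant to be the count $t_j = \#\{i \colon a_i \geq j\}$ and close with a two-sided pigeonhole argument ($r$ distinct values $a_i$ in $\{1, \ldots, s\}$ gives $r \leq s$; $s$ distinct nonzero counts $t_j$ in $\{1, \ldots, r\}$ gives $s \leq r$), which pins down $r = s$ before identifying the sequence. This is a more complete and self-contained write-up of the same computation, and it also makes explicit a point the paper glosses over entirely: that no $u_i$ can be a duplicate of a $v_j$, since their nonempty neighbourhoods lie in disjoint sides of the bipartition. Finally, your flag on the statement is justified: under the standing convention $a_r \leq \cdots \leq a_1$, the conclusion should read $a_i = r + 1 - i$ (equivalently, the $a_i$ take each value in $\{1, \ldots, r\}$ exactly once); the printed ``$a_i = i$'' only matches if the indices are read in reverse, and the paper itself writes the increasing form $G_{1, \ldots, m; m}$ in Corollary~\ref{cor:enum-bip}, so this is an indexing slip in the statement rather than a substantive discrepancy with either proof.
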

\begin{proof}
    Let $G = G_{a_1, \ldots, a_r; s}$, $U = \{u_1, \ldots, u_r\}$, and $V = \{v_1, \ldots, v_s\}$.

    Since $a_r \geq 1$, we have $N_G(v_1) = U$.  As $U \dcup V$ is a nested colouring of $G$, 
    $G$ is connected if and only if $N_G(u_1) = V$, i.e., $a_1 = s$.  This completes part (i).

    Furthermore, $N_G(u_i) = N_G(u_j)$ if and only if $a_i = a_j$, and $N_G(v_i) = N_G(v_j)$ if and only
    if $\max\{k \st a_k \geq i\} = \max\{k \st a_k \geq j\}$, i.e., there exists a $k$ such that
    $a_k - a_{k+1} \geq 2$.  This completes part (ii).
\end{proof}

Further, this permits an enumeration of certain colour-nested bipartite graphs.

\begin{corollary}\label{cor:enum-bip}
    If $n \geq 3$ is an odd integer, then there are precisely $2^{n-3}$ unique connected colour-nested
    bipartite graphs with $n$ vertices.

    Furthermore, there exists a unique duplicate-free colour-nested bipartite graph with $n$ vertices,
    where $n \geq 2$ is an integer.
\end{corollary}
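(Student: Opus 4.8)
The plan is to reduce both assertions to counting problems about partitions and then to exploit conjugation. For the connected count, I would start from Theorem~\ref{thm:bip}(iii): every connected colour-nested bipartite graph on $n$ vertices is some $G = G_{a_1,\ldots,a_r;s}$, and by Corollary~\ref{cor:bip-structure}(i) connectivity forces $a_1 = s$. Thus such a graph is encoded by the weakly decreasing sequence $\lambda = (a_1 \geq \cdots \geq a_r \geq 1)$ with $a_1 = s$, i.e.\ by a partition $\lambda$ whose largest part is $s$ and whose number of parts is $r$, subject to $r + s = n$. Fixing $r$ (so $s = n-r$) and prescribing $a_1 = n-r$, the remaining choice of $a_2 \geq \cdots \geq a_r$ is a weakly decreasing length-$(r-1)$ sequence with values in $\{1,\ldots,n-r\}$, of which there are $\binom{n-2}{r-1}$; summing over $1 \leq r \leq n-1$ gives exactly $\sum_{r}\binom{n-2}{r-1} = 2^{n-2}$ such partitions.

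The key step is to pass from these labelled sequences to isomorphism classes, and the claim I must prove is that $G_\lambda \cong G_\mu$ (both connected) if and only if $\mu = \lambda$ or $\mu = \lambda'$, where $\lambda'$ denotes the conjugate partition. A connected bipartite graph has a unique bipartition up to swapping its two sides, so any isomorphism either preserves the parts $U,V$ or swaps them. In the parts-preserving case the $U$-degree multisets agree, and since $\deg_G(u_i) = a_i$ this forces $\lambda = \mu$; in the swapping case $\mu$ must equal the $V$-degree multiset of $G_\lambda$, which is precisely $\lambda'$ because $\deg_G(v_j) = \#\{i : a_i \geq j\} = \lambda'_j$. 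Conversely, $u_i \mapsto v'_i$ and $v_j \mapsto u'_j$ is an isomorphism $G_\lambda \to G_{\lambda'}$, since $j \leq a_i$ if and only if $i \leq \lambda'_j$. Hence isomorphism classes correspond bijectively to unordered pairs $\{\lambda, \lambda'\}$.

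Conjugation is an involution on the $2^{n-2}$ partitions and interchanges largest part with number of parts, so any self-conjugate $\lambda$ must satisfy $s = r$, i.e.\ $n = 2r$. As $n$ is odd this is impossible, so there are no fixed points and the $2^{n-2}$ partitions split into $2^{n-2}/2 = 2^{n-3}$ conjugate pairs, giving exactly $2^{n-3}$ connected colour-nested bipartite graphs on $n$ vertices. For the duplicate-free statement I would note that a duplicate-free graph on $n \geq 2$ vertices has at most one isolated vertex (two would be duplicates by Remark~\ref{rem:isolated-vertices}) and hence at least one edge, so it is nontrivial and arises as $G_{a_1,\ldots,a_r;s}$ with any isolated vertex placed in $V$. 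Duplicate-freeness forces the $a_i$ to be distinct and the nonempty $v$-neighbourhoods to be distinct; as in the proof of Corollary~\ref{cor:bip-structure}(ii) this pins $a = (a_1, a_1-1, \ldots, 1)$ (the staircase, $r = a_1$), while the at-most-one-isolated-vertex condition forces $s \in \{a_1, a_1+1\}$. The case $s = a_1$ yields the connected staircase on $n = 2a_1$ vertices and the case $s = a_1 + 1$ yields the staircase plus one isolated vertex on $n = 2a_1+1$ vertices; exactly one of these matches the parity of $n$, giving the asserted unique graph.

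I expect the main obstacle to be the isomorphism-versus-conjugation correspondence in the second paragraph: the argument genuinely needs that connectivity pins down the bipartition (so that only parts-preserving and parts-swapping isomorphisms occur) and that the $V$-side degree sequence is exactly the conjugate partition. Once that is established, the remainder is a routine binomial identity together with the parity observation that eliminates self-conjugate partitions when $n$ is odd.
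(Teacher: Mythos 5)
Your proposal is correct and follows essentially the same route as the paper: encode connected colour-nested bipartite graphs (via Theorem~\ref{thm:bip} and Corollary~\ref{cor:bip-structure}) by weakly decreasing sequences with $a_1 = s$, count $\sum_{r}\binom{n-2}{r-1} = 2^{n-2}$ parameter sequences, and halve by the conjugation involution, which is fixed-point-free because $n$ odd forbids $r = s$. If anything, your write-up is more careful than the paper's at the two points it glosses over, namely the proof that an isomorphism forces $\mu \in \{\lambda, \lambda'\}$, and the treatment of the single isolated vertex in the duplicate-free case when $n$ is odd (where the unique graph is disconnected, so Corollary~\ref{cor:bip-structure}(ii) alone does not literally apply).
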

\begin{proof}
    By Theorem~\ref{thm:bip} and Corollary~\ref{cor:bip-structure}, we need only look at graphs
    $G_{a_1, \ldots, a_r; s}$ where $1 \leq a_r \leq \cdots \leq a_1 = s$ such that $r + s = n$.

    For fixed $r$ and $s$, $G_{a_1, \ldots, a_r; s} = G_{a'_1, \ldots, a'_r; s}$
    if and only if $a_i = a'_i$; otherwise the vertex degree sequences of the $u_i$ would differ.
    Hence there are $\binom{n-2}{r-1}$ graphs (we choose with repetition the $r-1$ values
    $a_1, \ldots, a_{r-1}$ among the $s$ options) with the specified $r$ and $s$.  Thus among all
    choices of $r$ and $s$ there are $\sum_{r=1}^{n-1}\binom{n-2}{r-1} = 2^{n-2}$ possible graphs.
    However, if $r > s$, then $G_{a_1, \ldots, a_r; s} = G_{b_1, \ldots, b_s; r}$ where
    $b_i = \max\{j \st a_j \geq i \}$ for $1 \leq i \leq s$.  Thus we have double counted in our
    enumeration, and so there are $2^{n-3}$ unique graphs.

    The second statement follows by Corollary~\ref{cor:bip-structure}(ii).  In particular, if $n=2m$,
    then $G_{1,\ldots,m;m}$ is the unique duplicate-free graph, and if $n = 2m+1$, then $G_{1,\ldots,m;m+1}$
    is the unique duplicate-free graph.
\end{proof}

\subsection{Regular graphs}\label{sub:regular}~

The nested chromatic number of a graph is the same as the number of vertices if and only if the graph is duplicate-free.
Moreover, large girth can force a regular graph to be duplicate-free.

\begin{proposition}\label{pro:regular}
    Let $G$ be a $d$-regular finite simple graph, for $d \geq 1$.  The graph $G$ is duplicate-free if and only if $\chi_N(G) = \#V(G)$.

    In particular, if the girth of $G$ is at least $5$, then $\chi_N(G) = \#V(G)$.
\end{proposition}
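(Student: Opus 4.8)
The plan is to reduce the first equivalence to the width characterisation of Corollary~\ref{cor:width} and then exploit the rigidity of regular graphs, namely that every open neighbourhood has the same cardinality $d$. I would first dispatch the ``only if'' direction, which needs no regularity: by Corollary~\ref{cor:width}, $\chi_N(G)$ is the width of $P_{G^\star}$, which is at most $\#V(G^\star) \leq \#V(G)$, and the latter inequality is an equality exactly when $G$ is duplicate-free. Hence $\chi_N(G) = \#V(G)$ forces $G^\star = G$, so $G$ is duplicate-free.

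For the ``if'' direction I would use regularity crucially. The key observation is that in a $d$-regular graph $N_G(u) \subseteq N_G(v)$ already forces $N_G(u) = N_G(v)$, since both sets have size $d$; thus every weak duplicate is in fact a duplicate. Consequently, if $G$ is duplicate-free then no two distinct vertices are comparable under the weak duplicate preorder, and (as $G^\star = G$) the poset $P_{G^\star} = P_G$ is an antichain on $\#V(G)$ elements, whose width is $\#V(G)$. Corollary~\ref{cor:width} then yields $\chi_N(G) = \#V(G)$. Equivalently, one can phrase this via Remark~\ref{rem:sperner}: for regular graphs ``duplicate-free'' coincides with the neighbourhoods forming a Sperner family, since equal-sized sets admit no proper containments.

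For the ``in particular'' statement it suffices, by the equivalence just proved, to show that a $d$-regular graph of girth at least $5$ is duplicate-free. I would argue by contradiction: suppose distinct vertices $u, v$ satisfy $N_G(u) = N_G(v)$. They are non-adjacent, since $u \sim v$ would give the impossible $v \in N_G(v)$. Now pick a common neighbour $w$. If $d \geq 2$ I choose a second common neighbour $w' \neq w$ and obtain a $4$-cycle through $u, w, v, w'$ (the four vertices are distinct and all four edges are present), contradicting girth at least $5$; if $d = 1$ the vertex $w$ is adjacent to both $u$ and $v$ and so has degree at least $2$, contradicting $1$-regularity. Either way $G$ is duplicate-free.

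The main obstacle — really the only subtle point — is the equal-cardinality observation that collapses the weak duplicate preorder of a regular graph to an equivalence relation. It is exactly this that upgrades ``duplicate-free'' to the Sperner condition driving the ``if'' direction; without it the equivalence fails for general graphs, where proper neighbourhood containments may occur.
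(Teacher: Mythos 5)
Your proof is correct and follows essentially the same route as the paper: the crux in both is that $d$-regularity forces any containment $N_G(u) \subseteq N_G(v)$ to be an equality, so weak duplicates are duplicates, and the girth argument via common neighbours is the paper's as well. Your write-up is slightly more careful in two spots---you make the passage to $\chi_N$ explicit via Corollary~\ref{cor:width} (the paper leans implicitly on Remark~\ref{rem:sperner}), and you handle $d=1$ separately, a case the paper's claim of ``at least two common neighbours'' silently skips.
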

\begin{proof}
    Let $G$ be a $d$-regular finite simple graph.  Since $\#N_G(u) = d$ for all vertices $u$ of $G$, 
    $u$ is a weak duplicate of $v$ if and only if $u$ and $v$ are duplicates.

    Now suppose $G$ has girth at least $5$.  If $u$ and $v$ are distinct duplicates, then $u$ and $v$ have at least
    two common neighbours, say, $\{w,x\}$.  Thus either $\{u,v,w,x\}$ induces a $4$-cycle or $\{v,w,x\}$ induces a
    $3$-cycle, i.e., the girth of $G$ is at most $4$.  This contradicts the girth of $G$ being at least $5$, and so
    $G$ is duplicate-free.
\end{proof}

As an immediate consequence, we can compute the nested chromatic number of snarks and Kneser graphs.
See Example~\ref{exa:petersen} for the Petersen graph, which is both a snark and the Kneser graph $KG_{5,2}$.

\begin{corollary}\label{cor:snark}
    If $G$ is a snark, then $\chi_N(G) = \#V(G)$.
\end{corollary}
\begin{proof}
    Snarks are $3$-regular and have girth at least $5$.
\end{proof}

\begin{corollary}\label{cor:kneser}
    If $n$ and $k$ are positive integers so that $n \geq 2k$, then the nested chromatic number of the Kneser
    graph $KG_{n,k}$ is $\chi_N(KG_{n,k}) = \#V(KG_{n,k}) = \binom{n}{k}$.
\end{corollary}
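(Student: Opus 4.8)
The plan is to apply Proposition~\ref{pro:regular}, which reduces the claim to verifying that $KG_{n,k}$ is a regular graph of positive degree that is duplicate-free. First I would record the regularity: a vertex is a $k$-subset $A$ of $[n] = \{1,\ldots,n\}$, and its neighbours are exactly the $k$-subsets disjoint from $A$, i.e.\ the $k$-subsets of $[n]\setminus A$. Since $\#([n]\setminus A) = n-k \geq k$ whenever $n \geq 2k$, every vertex has exactly $\binom{n-k}{k} \geq 1$ neighbours, so $KG_{n,k}$ is $\binom{n-k}{k}$-regular of degree at least $1$. Thus the hypotheses of Proposition~\ref{pro:regular} are met, and it suffices to show that $KG_{n,k}$ is duplicate-free. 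I would emphasise that one cannot simply invoke the girth portion of that proposition: when $n \geq 3k$ there are three pairwise disjoint $k$-subsets, so $KG_{n,k}$ contains a triangle and has girth $3$. Hence the duplicate-free property must be established directly.

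To do so, I would take distinct vertices $A \neq B$ and produce a vertex lying in exactly one of their neighbourhoods. Since $A$ and $B$ are $k$-sets of equal size, $A \neq B$ forces $A \setminus B \neq \emptyset$, so I may choose $x \in A \setminus B$. The key step is to build a $k$-subset $C$ with $x \in C$ and $C \cap B = \emptyset$: such a $C$ is disjoint from $B$, hence adjacent to $B$, but meets $A$ at $x$, hence not adjacent to $A$, which yields $N_{KG_{n,k}}(A) \neq N_{KG_{n,k}}(B)$. The construction is $C = \{x\} \cup S$, where $S$ is any $(k-1)$-subset of $[n]\setminus(B \cup \{x\})$; such an $S$ exists precisely because $\#([n]\setminus(B \cup \{x\})) = n-k-1 \geq k-1$, again using $n \geq 2k$. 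With the duplicate-free property in hand, Proposition~\ref{pro:regular} immediately gives $\chi_N(KG_{n,k}) = \#V(KG_{n,k}) = \binom{n}{k}$.

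The one delicate point I would check is the boundary case $n = 2k$, where the degree drops to $1$ and $[n]\setminus(B \cup \{x\})$ has exactly $k-1$ elements, so $C$ is forced to equal $[n]\setminus B$; one verifies that this $C$ still meets $A$ (indeed $x \in C \cap A$), so the distinguishing argument survives. I expect the main obstacle to be recognising that the girth route is genuinely unavailable for large $n$, and that the threshold $n \geq 2k$ is exactly what is needed both to guarantee positive degree and to fit the extra element $x$ together with $k-1$ further elements avoiding $B$.
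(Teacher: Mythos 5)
Your proof is correct and takes essentially the same route as the paper: both reduce the claim to showing $KG_{n,k}$ is duplicate-free and then invoke the regularity (not the girth) part of Proposition~\ref{pro:regular}. The paper dispatches duplicate-freeness with the one-line observation that a duplicate pair would have to be the same $k$-subset, whereas you verify the same fact more explicitly by constructing a distinguishing neighbour $C = \{x\} \cup S$; this is a detail-level elaboration of the same step, and your observation that the girth criterion is genuinely unavailable for $n \geq 3k$ is a correct sanity check rather than a change of method.
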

\begin{proof}
    Recall that the vertices of the Kneser graph $KG_{n,k}$ are the $k$-subsets of $\{1, \ldots, n\}$, and
    a pair of vertices are adjacent if the corresponding sets are disjoint.  This implies that no two vertices
    are duplicates, otherwise they would be the same $k$-subset.  By Proposition~\ref{pro:regular}, $KG_{n,k}$
    being duplicate-free implies that $\chi_N(KG_{n,k}) = \#V(KG_{n,k})$.
\end{proof}

Let $\overline{G}$ denote the complement of the finite simple graph $G$.  The nested chromatic number
of the $n$-cycle $C_n$ and the $n$-anticycle $\overline{C_n}$ are simple expressions, for large $n$.

\begin{corollary}\label{cor:cycle}
    Let $n \geq 3$ be an integer.  The following statements are true:
    \begin{enumerate}
        \item $\chi_N(C_3) = 3$ and $\chi_N(\overline{C_3}) = 1$,
        \item $\chi_N(C_4) = 2$ and $\chi_N(\overline{C_4}) = 4$, and
        \item $\chi_N(C_n) = n = \chi_N(\overline{C_n})$, for $n \geq 5$.
    \end{enumerate}
\end{corollary}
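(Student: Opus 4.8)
The plan is to treat the cycles and the anticycles separately, and within each family to peel off the two small exceptional cases $n = 3$ and $n = 4$ before handling the uniform range $n \geq 5$. The reason for the split is that the main tool for large $n$, namely Proposition~\ref{pro:regular}, depends on girth or regularity hypotheses that degenerate for the tiny graphs: $C_3$ and $C_4$ have girth below $5$, while $\overline{C_3}$ and $\overline{C_4}$ have regularity degree $0$ and $1$ respectively, so each small case must be identified with a graph whose nested chromatic number is already known.

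For the cycles I would argue as follows. Since $C_3 = K_3$ is complete, Remark~\ref{rem:singleton} together with $\chi(K_3) = 3$ (equivalently Lemma~\ref{lem:1-n1-n} with $\chi = n$) forces $\chi_N(C_3) = 3$. Next, $C_4 = K_{2,2}$ is complete bipartite, so Corollary~\ref{cor:complete-multi-partite} gives $\chi_N(C_4) = 2$ at once. Finally, for $n \geq 5$ the cycle $C_n$ is $2$-regular of girth $n \geq 5$, so the second assertion of Proposition~\ref{pro:regular} delivers $\chi_N(C_n) = \#V(C_n) = n$ with no further work.

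For the anticycles, $\overline{C_3}$ is the edgeless graph on three vertices, so $\chi(\overline{C_3}) = 1$ and Lemma~\ref{lem:1-n1-n} gives $\chi_N(\overline{C_3}) = 1$; and $\overline{C_4}$ is a perfect matching on four vertices (two disjoint edges), which is $1$-regular with four distinct singleton open neighbourhoods, so Proposition~\ref{pro:regular}—or directly Remark~\ref{rem:sperner}—yields $\chi_N(\overline{C_4}) = 4$. For $n \geq 5$ I would label the vertices $1, \ldots, n$ cyclically and record that $N_{\overline{C_n}}(i) = V(\overline{C_n}) \setminus \{i-1, i, i+1\}$, with indices read modulo $n$; thus $\overline{C_n}$ is $(n-3)$-regular with $n - 3 \geq 2 \geq 1$, so Proposition~\ref{pro:regular} will apply once $\overline{C_n}$ is shown to be duplicate-free.

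The hard part—indeed the only step that is not a one-line invocation—will be verifying duplicate-freeness of $\overline{C_n}$ for $n \geq 5$. Two vertices $i \neq j$ are duplicates precisely when $N_{\overline{C_n}}(i) = N_{\overline{C_n}}(j)$, i.e.\ when the three-element windows $\{i-1, i, i+1\}$ and $\{j-1, j, j+1\}$ coincide as sets. I would show these consecutive triples are pairwise distinct by identifying the centre intrinsically: within such a triple, the centre $i$ is the unique element adjacent in $C_n$ to the other two (for $n \geq 4$ the endpoints $i-1$ and $i+1$ are nonadjacent), so equality of the sets forces equality of the centres and hence $i = j$. With duplicate-freeness in hand, Proposition~\ref{pro:regular} gives $\chi_N(\overline{C_n}) = n$, completing part (iii) and the corollary.
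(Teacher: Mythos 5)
Your proposal is correct and follows essentially the same route as the paper: small cases by identification with known graphs ($K_3$, $\overline{K_3}$, $K_{2,2}$, $2K_2$), and the range $n \geq 5$ via Proposition~\ref{pro:regular}, reducing everything to duplicate-freeness. The only cosmetic difference is in verifying that $\overline{C_n}$ is duplicate-free: you compute the complements of neighbourhoods as consecutive windows $\{i-1,i,i+1\}$ and identify their centres intrinsically, whereas the paper argues by contradiction that a duplicate pair would yield an independent triple in $\overline{C_n}$, i.e.\ a triangle in $C_n$; both are sound one-paragraph arguments.
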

\begin{proof}
    Parts (i) and (ii) are easy to verify.  Since $C_n$ has girth $n$ and is $2$-regular, by
    Proposition~\ref{pro:regular}, $\chi_N(C_n) = n$ for $n \geq 5$.

    Let $n \geq 5$.  The $n$-anticycle $\overline{C_n}$ is $(n-3)$-regular.  Suppose $u$ and $v$
    are distinct vertices of $\overline{C_n}$ such that $u$ is a duplicate of $v$.  This implies
    that there is a vertex $w$, distinct from $u$ and $v$, that is nonadjacent to $u$ and $v$.
    Hence $\{u,v,w\}$ is an independent set in $\overline{C_n}$ and so induces a $3$-cycle in $C_n$,
    which is absurd.  Thus $\overline{C_n}$ is duplicate-free and $\chi_N(\overline{C_n}) = n$
    by Proposition~\ref{pro:regular}.
\end{proof}

This further emphasises the distinction between the chromatic number and the nested chromatic number.

\begin{remark}\label{rem:comp-planar}
    Since $C_n$ is a planar graph, this shows that planar graphs can have arbitrarily large nested
    chromatic number.  This contrasts the chromatic number for planar graphs, which is bounded by $4$.
    See Proposition~\ref{pro:planar} for more about the nested chromatic number and planar graphs.

    Let $G$ be a finite simple graph, and let $\overline{G}$ denote the complement of $G$.
    In this case, $\chi(G) + \chi(\overline{G}) \leq \#V(G)+1$.  However, the nested
    chromatic number can break this bound.  Indeed, by the previous lemma, we have
    $\chi_N(C_n) + \chi_N(\overline{C_n}) = 2n = 2\#V(C_n)$ for $n \geq 5$.
    On the other hand, $\chi_N(P_4) + \chi_N(\overline{P_4}) = \#V(P_4) = 4$, since $\overline{P_4} \cong P_4$.
\end{remark}

We offer a conjecture suggested by the preceding remark.

\begin{conjecture}\label{con:complement}
    If $G$ is a finite simple graph, then $\chi_N(G) + \chi_N(\overline{G}) \geq \#V(G)$.
\end{conjecture}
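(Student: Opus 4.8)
The plan is to translate the statement into a question about independent sets of two comparability graphs and then to reduce it to a single partition problem. First I would record, via Corollary~\ref{cor:width} and Dilworth's theorem, that $\chi_N(G)$ equals the maximum size of an antichain of the weak duplicate preorder; that is, $\chi_N(G) = \alpha(\Gamma)$, where $\Gamma$ is the comparability graph on $V(G)$ in which $u$ and $v$ are adjacent exactly when $N_G(u)$ and $N_G(v)$ are comparable under inclusion (duplicates are comparable, so an independent set of $\Gamma$ meets each duplicate class at most once and corresponds to an antichain of $P_{G^\star}$). Applying the same to $\overline{G}$ and using $N_{\overline{G}}(u) = V(G) \setminus N_G[u]$, one sees that $\chi_N(\overline{G}) = \alpha(\Gamma')$, where $\Gamma'$ is the comparability graph in which $u$ and $v$ are adjacent exactly when the \emph{closed} neighbourhoods $N_G[u]$ and $N_G[v]$ are comparable. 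The crucial structural point is that $\Gamma$ and $\Gamma'$ are edge-disjoint: if $N_G(u) \subseteq N_G(v)$ with $u \neq v$, then $u,v$ are non-adjacent in $G$ (otherwise $v \in N_G(u) \subseteq N_G(v)$, which is impossible), whereas $N_G[u] \subseteq N_G[v]$ forces $u,v$ to be adjacent. Thus $\Gamma \subseteq \overline{G}$ and $\Gamma' \subseteq G$, and the conjecture becomes the clean inequality $\alpha(\Gamma) + \alpha(\Gamma') \geq \#V(G)$.

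Next I would reduce this inequality to a partition statement. Writing $n = \#V(G)$, it suffices to partition $V(G) = S \dcup T$ in which $S$ is independent in $\Gamma$ and $T$ is independent in $\Gamma'$, since then $\alpha(\Gamma) + \alpha(\Gamma') \geq \#S + \#T = n$; equivalently, one seeks a set $S$ that is an antichain of the open-neighbourhood preorder and simultaneously a vertex cover of $\Gamma'$. Such a partition is a $2$-colouring of $V(G)$ that is monochromatic-free for $\Gamma$-edges in one colour and for $\Gamma'$-edges in the other, so its existence is exactly the satisfiability of a $2$-CNF formula whose clauses are all positive (one per $\Gamma$-edge) or all negative (one per $\Gamma'$-edge). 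Because no pair of vertices carries both kinds of clause, these instances are highly structured, and I would attempt to prove satisfiability directly from that structure.

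The hard part will be precisely this satisfiability. An unsatisfiable instance corresponds to a vertex $v$ lying on two closed walks that alternate between $\Gamma$-edges and $\Gamma'$-edges: one of odd length beginning and ending with a $\Gamma'$-edge at $v$, the other of odd length beginning and ending with a $\Gamma$-edge at $v$. The core of the proof will be to show that the transitivity of the two neighbourhood preorders, together with the incompatibility recorded above (comparable open neighbourhoods force non-adjacency, while comparable closed neighbourhoods force adjacency), rules out such configurations; I expect this to demand a careful local analysis of how an alternating walk propagates nested chains of open and closed neighbourhoods until they collide. As a safeguard, note that the partition is only \emph{sufficient}: should it fail for some $G$, the inequality $\alpha(\Gamma) + \alpha(\Gamma') \geq n$ might still hold through overlapping independent sets, so I would also pursue an induction on $\#V(G)$—deleting a minimal or maximal element of one preorder and controlling how the remaining neighbourhoods change—and calibrate the argument against the tight cases, such as $P_4$ and the threshold graphs, where $\chi_N(G) + \chi_N(\overline{G})$ equals $n$ or $n+1$.
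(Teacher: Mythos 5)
First, a point of order: this statement is Conjecture~\ref{con:complement} in the paper --- the author offers no proof of it, so there is no argument of the paper to compare yours against. The only question is whether your proposal stands on its own as a proof, and it does not. Your reformulation is sound and worth having: by Corollary~\ref{cor:width} and Dilworth's theorem, $\chi_N(G)$ is indeed the maximum size of a set of vertices with pairwise incomparable open neighbourhoods (duplicates being comparable, this agrees with the width of $P_{G^\star}$), and since $N_{\overline{G}}(u) = V(G) \setminus N_G[u]$, the quantity $\chi_N(\overline{G})$ is the maximum size of a set with pairwise incomparable closed neighbourhoods in $G$. The edge-disjointness observation is also correct: comparability of open neighbourhoods forces non-adjacency, while comparability of closed neighbourhoods forces adjacency. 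So the conjecture is equivalent to $\alpha(\Gamma) + \alpha(\Gamma') \geq \#V(G)$ for your two comparability graphs, and the partition $V(G) = S \dcup T$ you describe would suffice, via a 2-SAT instance whose clauses are all-positive or all-negative.

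The genuine gap is that the entire mathematical content lies in the step you defer: proving that this 2-SAT instance is always satisfiable (equivalently, that the implication digraph never contains both an odd alternating closed walk through $v$ bracketed by $\Gamma$-edges and one bracketed by $\Gamma'$-edges). You correctly describe what an obstruction would look like, but you give no argument ruling it out, only the expectation that transitivity of the two preorders will do so; note that mixed triangles (two $\Gamma$-edges plus one $\Gamma'$-edge, and vice versa) genuinely occur, e.g.\ in $P_3$ and in small graphs with leaves, so the obstruction cannot be excluded by purely local reasoning and any proof must control arbitrarily long alternating walks. Moreover, as you yourself concede, the partition statement is strictly stronger than the inequality, so even a counterexample-free resolution of the conjecture might have to abandon this route; and your fallback (induction by deleting an extremal vertex of one preorder) is likewise only a sketch --- deleting a vertex can create new comparabilities among the remaining vertices in \emph{both} preorders, and you give no mechanism to control this. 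As it stands, the proposal is a reasonable research plan and a correct translation of the problem, but the conjecture remains unproved by it.
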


\subsection{Diamond- and \texorpdfstring{$C_4$}{four-cycle}-free graphs}\label{sub:weakly-geodetic}~

Let the \emph{diamond graph} be $K_4$ with any edge removed; see Figure~\ref{fig:diamond-C4}(i).  If $G$
is both diamond- and $C_4$-free, then only the presence of leaves, i.e., degree $1$ vertices, can
reduce the nested chromatic number from $\#V(G)$.

\begin{figure}[!ht]
    \begin{minipage}[b]{0.48\linewidth}
        \centering
        \includegraphics[scale=1]{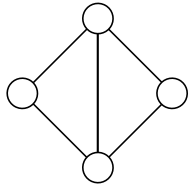}\\
        \emph{(i) The diamond graph.}
    \end{minipage}
    \begin{minipage}[b]{0.48\linewidth}
        \centering
        \includegraphics[scale=1]{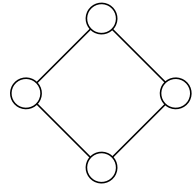}\\
        \emph{(ii) The $4$-cycle $C_4$.}
    \end{minipage}
    \caption{The forbidden graphs in Section~\ref{sub:weakly-geodetic}.}
    \label{fig:diamond-C4}
\end{figure}

\begin{theorem}\label{thm:weakly-geodetic}
    Let $G$ be a connected finite simple graph that is both diamond- and $C_4$-free.  If $G$ has
    $\ell$ leaves, then $\#V(G) - \ell \leq \chi_N(G) \leq \#V(G)$.  Furthermore,
    equality holds in the upper bound if and only if either $\ell = 0$ or $G = K_2$.

    In particular, if the minimum degree of a vertex $\delta(G)$ is at least $2$, then
    $\chi_N(G) = \#V(G)$.
\end{theorem}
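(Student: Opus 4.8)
The plan is to show that, under the diamond- and $C_4$-free hypothesis, the only weak duplications that can occur are forced by leaves; everything else then follows by counting in the weak duplicate preorder. The upper bound $\chi_N(G) \leq \#V(G)$ is immediate from Remark~\ref{rem:singleton}, so the work lies in the lower bound and in the equality characterisation.

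The central step is a structural observation: if $u$ is a weak duplicate of $v$ with $u \neq v$, then $u$ is a leaf. First note that $u$ and $v$ cannot be adjacent, since $v \in N_G(u) \subseteq N_G(v)$ would force $v \in N_G(v)$. Suppose toward a contradiction that $\deg_G(u) \geq 2$, and pick distinct $w, x \in N_G(u)$; since $N_G(u) \subseteq N_G(v)$ we also have $w, x \in N_G(v)$. The induced subgraph on $\{u, v, w, x\}$ then has the four edges $uw, ux, vw, vx$ present and $uv$ absent: if $wx \notin E(G)$ this is an induced $C_4$, and if $wx \in E(G)$ it is an induced diamond. Both are forbidden, so $\deg_G(u) \leq 1$; as $G$ is connected on at least two vertices, $u$ is a leaf. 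The potential induced $K_4$ configuration is avoided precisely because weak duplication forces $u \not\sim v$; keeping track of this forced non-adjacency is the subtle point, and the main obstacle to state the structural step cleanly.

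For the lower bound I would deduce that the non-leaf vertices form an antichain in the weak duplicate preorder: any two comparable vertices have one a weak duplicate of the other, so by the structural step the smaller of the two is a leaf. Since each colour class of a nested colouring is a chain in this preorder (by Definition~\ref{def:dup} and Definition~\ref{def:nested}), no colour class can contain two non-leaves. Hence any nested colouring uses at least $\#V(G) - \ell$ colours, giving $\chi_N(G) \geq \#V(G) - \ell$.

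Finally, for equality in the upper bound I would invoke Remark~\ref{rem:sperner}, by which $\chi_N(G) = \#V(G)$ exactly when no vertex is a weak duplicate of another. If $\ell = 0$ there are no leaves, so by the structural step there are no weak duplicates and equality holds; and $K_2$ has two leaves yet its two singleton neighbourhoods are incomparable, so equality holds there too. Conversely, if $\ell \geq 1$ and $G \neq K_2$, take a leaf $u$ with neighbour $w$; connectivity together with $G \neq K_2$ forces $\deg_G(w) \geq 2$, so $w$ has a neighbour $x \neq u$, whence $N_G(u) = \{w\} \subseteq N_G(x)$ exhibits $u$ as a weak duplicate of $x$ and yields $\chi_N(G) < \#V(G)$. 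The ``in particular'' clause is then exactly the case $\delta(G) \geq 2$, i.e.\ $\ell = 0$.
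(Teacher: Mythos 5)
Your proposal is correct and follows essentially the same route as the paper: the same key structural step (two common neighbours of non-adjacent vertices would force an induced diamond or $C_4$, so any proper weak duplicate must be a leaf), the same antichain/counting argument for the lower bound, and the same leaf-plus-neighbour construction for strictness when $\ell \geq 1$ and $G \neq K_2$. The only difference is cosmetic: you establish non-adjacency of weak duplicates before the case analysis, whereas the paper deduces adjacency from two common neighbours and then rules it out.
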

\begin{proof}
    Suppose $G$ is a connected finite simple graph that is both diamond- and $C_4$-free, and further
    suppose $G$ has $\ell$ leaves.

    Let $u$ and $v$ be distinct vertices of $G$.  If $u$ and $v$ have two neighbours in common, say, $w$
    and $x$, then $\{u, v, w, x\}$ must be a $4$-clique of $G$ since it cannot be a diamond or a $C_4$;
    thus $u$ and $v$ must be adjacent.  Hence if $u$ is a weak duplicate of $v$, then $u$ and $v$ must have
    exactly one neighbour in common, and so $\#N_G(u) = 1$, i.e., $u$ is a leaf.  Thus at most one element
    of each nested independent set is a non-leaf, and so $\chi_N(G)$ is at least the number of non-leaves, i.e.,
    $\#V(G) - \ell$.

    Clearly, if $G$ has no leaves, then $\chi_N(G) = \#V(G)$.  Suppose $G$ is not $K_2$, and $G$ has at
    least one leaf, say, $u$.  Let $v$ be the unique neighbour of $v$.  As $G$ is connected and not $K_2$,
    $v$ must have at least one neighbour not $u$, say, $w$.  Hence $N_G(u) = \{v\} \subset N_G(w)$, and $u$
    is a weak duplicate of $w$.  Thus $\{u, w\}$ is a nested independent set of $G$, and so $\chi_N(G) < \#V(G)$.
\end{proof}

Clearly, graphs with girth at least $5$ are diamond- and $C_4$-free.  Since $d$-regular graphs have no leaves,
if $d \geq 2$, then this recovers the second part of Proposition~\ref{pro:regular}, as well as
Corollary~\ref{cor:snark} and Corollary~\ref{cor:cycle}(iii).

Trees, which have infinite girth, are diamond- and $C_4$-free graphs.

\begin{corollary}\label{cor:trees}
    Let $G$ be a finite simple tree with at least three vertices.  If $G$ has $\ell$ leaves, then
    $\#V(G) - \ell \leq \chi_N(G) < \#V(G)$.
\end{corollary}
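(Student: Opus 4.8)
The plan is to verify that every tree satisfies the hypotheses of Theorem~\ref{thm:weakly-geodetic} and then to extract the strict upper bound from the equality clause of that theorem. First I would observe that a tree is, by definition, connected, and being acyclic it contains no cycle whatsoever; in particular it contains neither a $4$-cycle nor a diamond, since the diamond graph (namely $K_4$ with an edge removed) contains triangles. Hence any finite simple tree $G$ is a connected, diamond- and $C_4$-free graph, so Theorem~\ref{thm:weakly-geodetic} applies directly and yields $\#V(G) - \ell \leq \chi_N(G) \leq \#V(G)$, where $\ell$ denotes the number of leaves of $G$. This already establishes the lower bound and the weak form of the upper bound.

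It then remains only to upgrade $\chi_N(G) \leq \#V(G)$ to the strict inequality $\chi_N(G) < \#V(G)$. For this I would invoke the ``furthermore'' clause of Theorem~\ref{thm:weakly-geodetic}, which asserts that equality $\chi_N(G) = \#V(G)$ holds if and only if either $\ell = 0$ or $G = K_2$. Since $G$ is a tree on at least three vertices, it is certainly not $K_2$; and since every tree with at least two vertices has at least two leaves, we have $\ell \geq 2 > 0$. Thus neither of the two conditions forcing equality can hold, which leaves $\chi_N(G) < \#V(G)$.

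Because the result is essentially a direct specialisation of the preceding theorem, I do not anticipate any genuine obstacle. The only point that warrants explicit (if elementary) mention is the standard fact that a tree with at least two vertices has at least two leaves: this is precisely what guarantees $\ell \geq 1$ and hence eliminates the $\ell = 0$ equality case, so care should be taken to state it rather than leave it implicit.
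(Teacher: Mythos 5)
Your proposal is correct and matches the paper's intended argument: the paper states this corollary without proof, immediately after remarking that trees have infinite girth and are hence diamond- and $C_4$-free, so the result is exactly the specialisation of Theorem~\ref{thm:weakly-geodetic} you describe. Your explicit handling of the equality clause (ruling out $\ell = 0$ via the standard fact that a tree on at least two vertices has at least two leaves, and ruling out $G = K_2$ by the vertex count) is precisely the filling-in the paper leaves implicit.
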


This immediately gives the nested chromatic number for path graphs.

\begin{corollary}\label{cor:path}
    Let $P_n$ be the path graph on $n$ vertices.  The nested chromatic number of $P_n$ is
    \[
        \chi_N(P_n) =
        \left\{
            \begin{array}{ll}
                2 & \mbox{if $2 \leq n \leq 4$,} \\
                4 & \mbox{if $n = 5$, and } \\
                n-2 & \mbox{if $n \geq 6$.}
            \end{array}
        \right.
    \]
\end{corollary}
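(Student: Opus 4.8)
The plan is to treat three ranges of $n$ separately, using the leaf bounds of Corollary~\ref{cor:trees} to trap $\chi_N(P_n)$ in a two-element set and then resolving the remaining ambiguity by explicit constructions. Throughout I label the vertices $1, 2, \ldots, n$ consecutively along the path, so that the open neighbourhood of $i$ in $P_n$ is $N(i) = \{i-1, i+1\} \cap \{1, \ldots, n\}$; the leaves are exactly $1$ and $n$, whence $P_n$ has $\ell = 2$ leaves for every $n \geq 2$.

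For $2 \leq n \leq 4$ the path is bipartite and contains an edge, so $\chi_N(P_n) \geq \chi(P_n) = 2$ by Remark~\ref{rem:singleton}. For the matching upper bound I would exhibit a nested $2$-colouring in each case: $P_2 = K_2$ is covered by its singleton colouring; in $P_3$ the endpoints $1$ and $3$ are duplicates, so $\{1,3\} \dcup \{2\}$ is nested; and in $P_4$ the bipartition $\{1,3\} \dcup \{2,4\}$ is nested, since $N(1) \subset N(3)$ and $N(4) \subset N(2)$. (Equivalently, each of these paths appears among the colour-nested bipartite graphs classified in Theorem~\ref{thm:bip}.) This yields $\chi_N(P_n) = 2$ for $2 \leq n \leq 4$.

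For $n \geq 5$, Corollary~\ref{cor:trees} gives $n - 2 \leq \chi_N(P_n) < n$, so it remains to decide between $n-2$ and $n-1$. When $n \geq 6$ I would establish the upper bound $\chi_N(P_n) \leq n-2$ by a direct construction. The key observation is that $N(1) = \{2\} \subset \{2,4\} = N(3)$ and $N(n) = \{n-1\} \subset \{n-3, n-1\} = N(n-2)$, so both $\{1,3\}$ and $\{n-2, n\}$ are nested independent sets, and for $n \geq 6$ they are disjoint. Taking them as two colour classes and placing each of the remaining $n-4$ vertices in its own class produces a nested colouring with $2 + (n-4) = n-2$ classes; together with the lower bound this gives $\chi_N(P_n) = n-2$ for $n \geq 6$.

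The remaining, and subtlest, case is $n = 5$, which is precisely where the construction above collapses: the two endpoint chains $\{1,3\}$ and $\{n-2,n\} = \{3,5\}$ collide at the central vertex $3$, so the generic argument loses a class, and this collision is exactly what forces the anomalous value. To prove $\chi_N(P_5) \geq 4$ I would check that the only weak-duplicate relations in $P_5$ are that $1$ and $5$ are each weak duplicates of $3$ (here $N(1) = \{2\} \subset N(3) = \{2,4\}$ and $N(5) = \{4\} \subset N(3)$), so the only nontrivial chains in the weak duplicate order are $\{1,3\}$ and $\{3,5\}$. Hence every nested independent set of size at least two contains the vertex $3$, so at most one colour class can have more than one vertex, forcing at least $1 + 3 = 4$ classes; equivalently, by Corollary~\ref{cor:width} the antichain $\{1,2,4,5\}$ shows the width of $P_{P_5}$ is $4$. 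Since $\{1,3\} \dcup \{2\} \dcup \{4\} \dcup \{5\}$ is a nested $4$-colouring, $\chi_N(P_5) = 4$. The main obstacle is exactly this case: one cannot simply invoke the leaf estimate of Corollary~\ref{cor:trees}, but must instead argue a genuine lower bound from the structure of the weak duplicate order.
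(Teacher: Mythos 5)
Your proof is correct and takes essentially the same route as the paper: for $n \geq 6$ it uses the identical construction $\{1,3\} \dcup \{n-2,n\}$ plus singletons together with the lower bound $\#V(P_n) - \ell = n-2$ from Corollary~\ref{cor:trees}. The only difference is that you work out the small cases $2 \leq n \leq 5$ in full detail --- in particular the weak-duplicate analysis showing $\chi_N(P_5) = 4$ --- whereas the paper dismisses these cases as simple to verify.
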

\begin{proof}
    If $2 \leq n \leq 5$, then it is simple to verify the claim.

    Suppose $n \geq 6$, and without loss of generality assume the edges of $P_n$ are
    $\{\{1,2\},\ldots,\{n-1,n\}\}$.  In this case, $N_{P_n}(1) = \{2\} \subset N_{P_n}(3) = \{2,4\}$
    and $N_{P_n}(n) = \{n-1\} \subset N_{P_n}(n-2) = \{n-3,n-1\}$.  Since $n \geq 6$, $n-2 \neq 3$, and so
    \[
        \{1,3\} \dcup \{n-2, n\} \dcup  \{2\} \dcup \{4\} \ddd \{n-3\} \dcup \{n-1\}
    \]
    is a nested colouring of $P_n$.  Hence $\chi_N(P_n) \leq n-2$, and so equality holds by
    Corollary~\ref{cor:trees}.
\end{proof}

We close with some comments about the class of diamond- and $C_4$-free graphs.

\begin{remark}\label{rem:weakly-geodetic}
    The class of diamond- and $C_4$-free graphs has been studied in the more general setting of
    diamond- and even-cycle-free graphs by Kloks, M\"uller, and Vu\v{s}kovi\'c~\cite{KMV}.  Some of their
    results specify to the case of diamond- and $C_4$-free graphs.

    In a more focused case, Eschen, Ho\`ang, Spinrad, and Srithavan~\cite{EHSS} studied structural
    results on this class of graphs.  Moreover, they provide a polynomial-time recognition algorithm.
    They make use of an alternate classification of diamond- and $C_4$-free graphs:  they are precisely
    the graphs such that every nonadjacent pair of vertices has at most one common neighbour.

    We further note that diamond- and $C_4$-free graphs were called \emph{weakly geodetic graphs} in the past;
    see, e.g., \cite{KC}.
\end{remark}

\section{Induced subgraphs}\label{sec:induced-subgraphs}

A first natural operation to consider is that of taking induced subgraphs.

\subsection{Induced subgraphs}\label{sub:induced-subgraphs}~

The nested chromatic number behaves the same as the chromatic number under
taking induced subgraphs.

\begin{proposition}\label{pro:induced-subgraph}
    Let $G$ be a finite simple graph, and let $H$ be an induced subgraph of $G$.
    If $C_1 \ddd C_k$ is a nested colouring $\mC$ of $G$, then
    $(C_1 \cap V(H)) \ddd (C_k \cap V(H))$ is a nested colouring $\mC'$ of $H$.

    In particular, $\chi_N(H) \leq \chi_N(G)$.
\end{proposition}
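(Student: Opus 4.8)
The plan is to check the three defining features of a nested colouring for $\mC'$ directly, leveraging the single crucial fact that $H$ is an \emph{induced} subgraph: for every $w \in V(H)$ one has $N_H(w) = N_G(w) \cap V(H)$. First I would observe that the sets $C_i \cap V(H)$ partition $V(H)$ (after discarding any that are empty), since the $C_i$ partition $V(G)$ and $V(H) \subseteq V(G)$. Next, each $C_i \cap V(H)$ is independent in $H$: it is a subset of the independent set $C_i$, and as $H$ is induced, any edge of $H$ with both endpoints in $C_i \cap V(H)$ would be an edge of $G$ inside $C_i$, contradicting independence of $C_i$ in $G$. Hence $\mC'$ is at least a proper colouring of $H$.

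The heart of the matter is showing that each class $C_i \cap V(H)$ is nested. Since $C_i$ is nested in $G$, I would fix the linear order on $C_i$ for which $v \leq u$ implies $N_G(u) \subseteq N_G(v)$, and give $C_i \cap V(H)$ the induced order. I claim this order witnesses nesting in $H$: if $v \leq u$ in $C_i \cap V(H)$, then $N_G(u) \subseteq N_G(v)$, and intersecting both sides with $V(H)$ yields $N_G(u) \cap V(H) \subseteq N_G(v) \cap V(H)$, i.e.\ $N_H(u) \subseteq N_H(v)$ by the induced-subgraph identity. Thus $u$ remains a weak duplicate of $v$ in $H$, so the class is nested and $\mC'$ is a nested colouring of $H$. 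Equivalently, one can verify condition (iii) of Proposition~\ref{pro:clique-exchange}: if $v$ is less than $u$ and $\{u,w\}$ is an edge of $H$, then $\{u,w\}$ is an edge of $G$, so $\{v,w\}$ is an edge of $G$, and as $v,w \in V(H)$ with $H$ induced, $\{v,w\}$ is an edge of $H$.

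For the final inequality, I would apply the first part to an \emph{optimal} nested colouring of $G$, namely one with $k = \chi_N(G)$ classes. Its restriction $\mC'$ is then a nested colouring of $H$ using at most $k$ nonempty classes, whence $\chi_N(H) \leq k = \chi_N(G)$.

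I expect no serious obstacle; the only point requiring care — and the only place the hypothesis is genuinely used — is the identity $N_H(w) = N_G(w) \cap V(H)$, which holds precisely because $H$ is induced. Were $H$ merely an arbitrary subgraph, the open neighbourhoods of two vertices of a class could lose edges unequally, destroying the inclusion $N_H(u) \subseteq N_H(v)$; so the induced hypothesis is essential rather than cosmetic, and I would flag this explicitly.
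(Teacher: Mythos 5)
Your proof is correct and takes essentially the same approach as the paper: both arguments hinge on the identity $N_H(v) = N_G(v) \cap V(H)$ for induced subgraphs, which transfers properness and the nesting containments $N_G(u) \subseteq N_G(v)$ directly to $H$. Your write-up merely makes explicit the routine details (partitioning, independence, applying the result to an optimal colouring) that the paper's two-line proof leaves implicit.
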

\begin{proof}
    It is already known that $\mC'$ is a proper colouring of $H$, since
    $\mC$ is a proper colouring of $G$.  Moreover, since $N_H(v) = N_G(v) \cap V(H)$
    for $v \in V(H)$, the nesting of $C_i$ implies the nesting of $C_i \cap V(H)$.
\end{proof}

Together with Corollary~\ref{cor:cycle}, the preceding proposition implies that the maximum length of
an induced cycle, if one exists and is big enough, forms an effective lower bound for the nested
chromatic number.

\begin{corollary}\label{cor:cycle-bound}
    Let $G$ be a finite simple graph which has at least one induced cycle.  If the maximum length of
    an induced cycle $c$ is at least $5$, then $\chi_N(G) \geq c$.
\end{corollary}
\begin{proof}
    This follows from Proposition~\ref{pro:induced-subgraph} and Corollary~\ref{cor:cycle}.
\end{proof}

\begin{remark}\label{rem:cycle-bound}
    We offer a pair of comments about the preceding corollary.
    \begin{enumerate}
        \item If the girth of a graph is finite and at least $5$, then it is a lower bound for
            the nested chromatic number of the graph.
        \item Suppose the longest induced \emph{odd} cycle of $G$ has length $2k-1$.  Erd\H{o}s
            and Hajnal~\cite[Theorem~7.7]{EH} proved that $\chi(G) \leq 2k$.  On the other hand,
            the preceding result shows that $\chi_N(G) \geq 2k-1$, if $k \geq 3$.  See~\cite{KV}
            for further results bounding $\chi(G)$ using the length of the longest induced odd cycle.
    \end{enumerate}
\end{remark}

Let $G$ be a finite simple graph, and let $v$ be a vertex of $G$.  The \emph{vertex deletion of $G$ by $v$}
is the induced subgraph $G - v$ of $G$ on vertex set $V(G) \setminus \{v\}$.  The chromatic number is reduced by
at most one after vertex deletion.  The nested chromatic number is reduced by at most one more than the
degree of the vertex that was deleted.

\begin{proposition}\label{pro:vertex-deletion}
    Let $G$ be a finite simple graph.  If $v$ is any vertex of $G$, then
    \[
        \chi_N(G) - \#N_G(v) - 1 \leq \chi_N(G - v) \leq \chi_N(G).
    \]
\end{proposition}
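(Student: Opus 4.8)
The upper bound $\chi_N(G-v) \leq \chi_N(G)$ is immediate: $G-v$ is an induced subgraph of $G$, so Proposition~\ref{pro:induced-subgraph} applies directly. The entire burden of the proposition lies in the lower bound, which I rewrite as $\chi_N(G) \leq \chi_N(G-v) + \#N_G(v) + 1$. The plan is to take an optimal nested colouring of $G-v$ and extend it to a nested colouring of $G$ that uses at most $\#N_G(v)+1$ additional colour classes.

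Let me think about why deleting $v$ could lower the nested chromatic number at all. In a nested colouring of $G$, the colour class containing $v$ is a nested independent set, so its vertices are totally preordered by neighbourhood inclusion. When $v$ is removed, two things can happen: the class of $v$ may merge more easily with others, and more importantly, other vertices whose neighbourhoods were only nested \emph{because} they compared correctly with $v$ (or because $v$ sat in their neighbourhoods) may suddenly become comparable to each other. The key point is that the obstruction to comparability is localised: the neighbourhood of a vertex $w$ changes under deletion only if $w \in N_G(v)$, since $N_{G-v}(w) = N_G(w) \setminus \{v\}$, and this differs from $N_G(w)$ exactly when $w$ is a neighbour of $v$.

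So here is the concrete plan. First I would fix a nested colouring $\mathcal{D} = D_1 \dcup \cdots \dcup D_r$ of $G-v$ with $r = \chi_N(G-v)$, where each $D_i$ carries its witnessing total order under the weak duplicate preorder of $G-v$. I then want to recover a nested colouring of $G$. The difficulty is that each $D_i$, viewed as a set of vertices of $G$, need not be nested in $G$: adding $v$ back into the graph enlarges the open neighbourhoods of precisely the vertices in $N_G(v)$, and within a single class $D_i$ two such vertices, or one such vertex and a non-neighbour of $v$, may no longer be comparable. My strategy is to quarantine the troublemakers. I pull every vertex of $N_G(v)$ out of its class in $\mathcal{D}$ and place each of these $\#N_G(v)$ vertices into its own singleton class; I place $v$ itself into one further singleton class. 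The claim is then that (a) the pruned classes $D_i \setminus N_G(v)$ remain nested in $G$, because for vertices $w \notin N_G[v]$ we have $N_G(w) = N_{G-v}(w)$, so the inclusions witnessing nestedness in $G-v$ survive verbatim in $G$; and (b) the singletons are trivially nested, and the partition is proper in $G$ since the only new edges of $G$ involve $v$, which is isolated in its own class, against the $N_G(v)$ vertices, which are likewise isolated in their own classes.

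This produces a nested colouring of $G$ using at most $r + \#N_G(v) + 1$ classes, giving the bound; I would note that empty classes are simply discarded. \textbf{The main obstacle} is verifying step (a) cleanly, namely that removing the $N_G(v)$-vertices from a nested class leaves a class that is genuinely nested in $G$ and not merely in $G-v$: I must check that every surviving vertex $w$ has $N_G(w) = N_{G-v}(w)$, which holds precisely because $w \notin N_G(v)$, and that the residual total order still witnesses the weak-duplicate relation in $G$ rather than in $G-v$. Once this neighbourhood-stability observation is in place the rest is bookkeeping, and I expect the whole argument to run in a short paragraph paralleling the proof of Proposition~\ref{pro:de-dup}.
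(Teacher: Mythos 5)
Your proposal is correct and matches the paper's proof essentially verbatim: the paper also keeps the pruned classes $C_i \setminus N_G(v)$, places $v$ and each of its neighbours into singleton classes, and justifies nestedness by the observation that deleting $v$ only changes the neighbourhoods of vertices in $N_G(v)$. Your step (a), noting $N_G(w) = N_{G-v}(w)$ for surviving vertices $w$, is exactly the paper's remark that ``the presence of $v$ only affects the neighbourhoods of its neighbours,'' so no further comparison is needed.
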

\begin{proof}
    The upper bound follows immediately from Proposition~\ref{pro:induced-subgraph}.

    Let $C_1 \ddd C_k$ be a nested colouring of $G - v$.  This implies that
    $C'_1 \ddd C'_k$, where $C'_i = C_i \setminus N_G(v)$, together with $\{v\}$ and the singleton
    sets containing each neighbour of $v$ is a nested colouring of $G$.  This follows as
    the presence of $v$ only affects the neighbourhoods of its neighbours.  Hence
    $k + \#N_G(v) + 1 \geq \chi_N(G)$, and so $k \geq \chi_N(G) - \#N_G(v) - 1$.
\end{proof}

Both bounds in the preceding proposition are achievable.

\begin{example}\label{exa:vertex-deletion}
    Let $n \geq 3$.  Notice that $C_n - v = P_{n-1}$ and $\#N_{C_n}(v) = 2$ for any vertex $v$ of $C_n$.
    Combining Corollaries~\ref{cor:cycle} and~\ref{cor:path}, we have that
    $\chi_N(C_n - v) = \chi_N(P_{n-1}) = \chi_N(C_n) - \#N_{C_n}(v) - 1$ if $n \geq 5$ and $n \neq 6$.

    \begin{figure}[!ht]
        \includegraphics[scale=0.75]{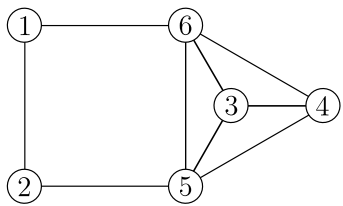}
        \caption{A graph $G$ such that $\chi_N(G) = \chi_N(G - 5) = \chi_N(G - 6) = 4$.}
        \label{fig:vertex-deletion}
    \end{figure}
    On the other hand, let $G$ be as in Figure~\ref{fig:vertex-deletion}.  We have
    $\chi_N(G) = \chi_N(G - 5) = \chi_N(G - 6) = 4$ despite $\#N_G(5) = \#N_G(6) = 4$.
\end{example}

\subsection{Criticality}\label{sub:criticality}~

Recall that a vertex $v$ of a finite simple graph $G$ is a \emph{critical vertex} if $\chi(G - v) = \chi(G) - 1$.
Further, if every vertex of $G$ is a critical vertex, then $G$ is \emph{vertex-critical} (or \emph{vertex-colour-critical}).
Critical vertices are never weak duplicates of other vertices in $G$.

\begin{lemma}\label{lem:critical-weak-dup}
    Let $G$ be a finite simple graph.  If $v$ is a critical vertex of $G$, and $v$ is a weak duplicate of
    $w \in G$, then $w = v$.
\end{lemma}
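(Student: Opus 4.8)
The plan is to argue by contradiction using the defining property of a critical vertex together with the clique-exchange behaviour of weak duplicates established in Lemma~\ref{lem:clique-exchange}. Suppose $v$ is a critical vertex of $G$ that is a weak duplicate of some $w \in V(G)$ with $w \neq v$; I want to derive a contradiction with $\chi(G-v) = \chi(G)-1$. The key idea is that, since $v$ is a weak duplicate of $w$, we have $N_G(v) \subset N_G(w)$, and so any proper colouring of $G-v$ can be extended to a proper colouring of $G$ without increasing the number of colours, by simply assigning $v$ the colour already used on $w$.

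First I would take an optimal proper colouring of $G-v$, which uses exactly $\chi(G-v) = \chi(G)-1$ colours. Next I would observe that $w$ is a vertex of $G-v$ (since $w \neq v$), so it receives some colour, say colour $c$, in this colouring. I would then attempt to colour $v$ with the same colour $c$. To verify this yields a proper colouring of $G$, I need to check that $v$ is not adjacent to any vertex already coloured $c$. The only new adjacencies introduced by adding $v$ back are the edges from $v$ to its neighbours $N_G(v)$; since $N_G(v) \subset N_G(w)$ and the colouring of $G-v$ is proper, no neighbour of $w$ (hence no neighbour of $v$) carries colour $c$. Therefore assigning colour $c$ to $v$ keeps the colouring proper.

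This produces a proper colouring of $G$ using only $\chi(G)-1$ colours, contradicting the minimality of $\chi(G)$. Hence no such $w \neq v$ can exist, establishing the lemma. I expect the entire argument to be routine; the only point requiring slight care is confirming that $w$ is genuinely a vertex of $G-v$, which follows immediately from $w \neq v$, and that the strict-versus-weak containment $N_G(v) \subset N_G(w)$ is exactly what licenses reusing $w$'s colour on $v$. There is no serious obstacle here, as the weak-duplicate condition is precisely designed to make the colour of a ``larger-neighbourhood'' vertex available to the smaller one.
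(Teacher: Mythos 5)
Your proof is correct and is essentially the paper's own argument: both take an optimal $(\chi(G)-1)$-colouring of $G-v$ and use $N_G(v) \subset N_G(w)$ to place $v$ in $w$'s colour class, contradicting criticality. (The appeal to Lemma~\ref{lem:clique-exchange} in your opening plan is never actually used; the colour-reuse argument stands on its own, just as in the paper.)
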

\begin{proof}
    Suppose $\chi(G) = k$, and let $C_1 \ddd C_{k-1}$ be an optimal colouring of $G - v$.
    Assume $w \neq v$ and $w \in C_1$, without loss of generality.  Since $N_G(v) \subset N_G(w)$,
    $v$ is independent of the vertices in $C_1$.  Hence $(C_1 \dcup \{v\}) \dcup C_2 \ddd C_{k-1}$
    is a colouring of $G$, contradicting $\chi(G) = k$.  Thus $w = v$.
\end{proof}

This implies that the number of critical vertices provides a lower bound for the nested chromatic number.

\begin{corollary}\label{cor:critical-bound}
    Let $G$ be a finite simple graph.  If $c$ is the number of critical vertices of $G$, then
    $\chi_N(G) \geq c$.

    In particular, if $G$ is vertex-critical, then $\chi_N(G) = \#V(G)$.
\end{corollary}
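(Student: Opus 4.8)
The plan is to deduce the bound directly from Lemma~\ref{lem:critical-weak-dup} by showing that any single colour class of a nested colouring can contain at most one critical vertex; the inequality then follows by counting classes. First I would fix an arbitrary nested colouring $\mC = C_1 \ddd C_k$ of $G$ and analyse it class by class.

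The key step is to establish that each colour class $C_i$ contains at most one critical vertex. Suppose toward a contradiction that some $C_i$ contains two distinct critical vertices $u$ and $v$. Since $C_i$ is a nested independent set, its vertices are linearly ordered under the weak duplicate preorder (Definition~\ref{def:dup}), so $u$ and $v$ are comparable; after relabelling we may assume $v$ is a weak duplicate of $u$. But $v$ is critical, so Lemma~\ref{lem:critical-weak-dup}, applied with the critical vertex $v$ being a weak duplicate of $u$, forces $u = v$, contradicting that $u$ and $v$ are distinct. Hence each class holds at most one critical vertex.

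The remaining steps are routine. Because each of the $k$ colour classes contains at most one critical vertex, we have $k \geq c$, where $c$ is the number of critical vertices; since this holds for every nested colouring, $\chi_N(G) \geq c$. For the final assertion, if $G$ is vertex-critical then every vertex is critical, so $c = \#V(G)$ and thus $\chi_N(G) \geq \#V(G)$; combining this with the upper bound $\chi_N(G) \leq \#V(G)$ from Remark~\ref{rem:singleton} yields $\chi_N(G) = \#V(G)$.

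There is no substantial obstacle here; the only point requiring care is orienting the application of Lemma~\ref{lem:critical-weak-dup} correctly, namely ensuring that the vertex playing the role of the weak duplicate in the lemma is one of the critical vertices. The nesting of the colour class is exactly what supplies the comparability that makes one of the two vertices a weak duplicate of the other, which is what permits the lemma to be invoked.
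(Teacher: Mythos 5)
Your proof is correct and takes essentially the same route as the paper: the paper's (one-line) proof also invokes Lemma~\ref{lem:critical-weak-dup} to conclude that each nested colour class can contain at most one critical vertex, and then counts classes. Your write-up simply makes explicit the comparability argument within a class and the correct orientation of the lemma (the critical vertex playing the role of the weak duplicate), which is exactly what the paper leaves implicit.
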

\begin{proof}
    By Lemma~\ref{lem:critical-weak-dup}, every critical vertex of $G$ must be at the top of its
    own nested colour class, which immediately implies the bound.
\end{proof}

We define a concept of criticality for the nested chromatic number.

\begin{definition}\label{def:nested-critical}
    A finite simple graph $G$ is \emph{nested-critical} if the deletion of any vertex reduces
    the nested chromatic number of $G$.
\end{definition}

Graphs with large nested chromatic number are nested-critical.

\begin{proposition}\label{pro:stab-crit}
    Let $G$ be a finite simple graph.  If $\chi_N(G) = \#V(G)$, then $G$ is nested-critical.

    In particular, if $G$ is vertex-critical, then $G$ is nested-critical.
\end{proposition}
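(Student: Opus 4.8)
The plan is to notice that the first assertion is a direct consequence of the universal upper bound $\chi_N \leq \#V$, and that the second assertion then follows by feeding the hypothesis of the first through the criticality bound already established.

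For the first assertion I would argue as follows. Fix an arbitrary vertex $v$ of $G$ and apply Remark~\ref{rem:singleton} to the induced subgraph $G - v$ to obtain $\chi_N(G - v) \leq \#V(G - v) = \#V(G) - 1$. Invoking the hypothesis $\chi_N(G) = \#V(G)$ turns this into $\chi_N(G - v) \leq \chi_N(G) - 1 < \chi_N(G)$. Since $v$ was arbitrary, the deletion of every vertex strictly lowers the nested chromatic number, which is precisely the defining property of a nested-critical graph. Note that the weaker bound $\chi_N(G - v) \leq \chi_N(G)$ coming from Proposition~\ref{pro:vertex-deletion} does not suffice on its own; the strict drop is forced by the fact that $G - v$ has one fewer vertex than $G$.

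For the ``in particular'' statement I would simply chain two results. Corollary~\ref{cor:critical-bound} shows that a vertex-critical graph $G$ satisfies $\chi_N(G) = \#V(G)$, so any such $G$ meets the hypothesis of the first assertion; the first assertion then yields that $G$ is nested-critical.

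Because every ingredient is already in hand, I do not anticipate a substantive obstacle. The only point demanding attention is the bookkeeping of strict versus non-strict inequalities: one must ensure that it is the loss of a single vertex, rather than any structural feature of $G$, that produces the required strict inequality $\chi_N(G - v) < \chi_N(G)$.
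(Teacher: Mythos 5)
Your proposal is correct and is essentially identical to the paper's own argument: the strict inequality $\chi_N(G-v) \leq \#V(G-v) < \#V(G) = \chi_N(G)$ for each vertex $v$, followed by Corollary~\ref{cor:critical-bound} for the vertex-critical case. Nothing further is needed.
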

\begin{proof}
    This follows immediately since $\chi_N(G - v) \leq \#V(G - v) < \chi_N(G)$.

    The second claim follows from Corollary~\ref{cor:critical-bound}.
\end{proof}

Being nested-critical does not imply being vertex-critical.  For example,
$P_n$ is nested-critical for $n \geq 7$ but is never vertex-critical.

However, if $G$ is colour-nested, then being nested-critical is equivalent to being vertex-critical.

\begin{proposition}\label{pro:colour-nested-critical}
    Let $G$ be a finite simple graph.  If $G$ is colour-nested, then the following conditions are equivalent:
    \begin{enumerate}
        \item $G$ is nested-critical,
        \item $G$ is vertex-critical,
        \item $\chi_N(G) = \#V(G)$, and
        \item $G = K_{\#V(G)}$.
    \end{enumerate}
\end{proposition}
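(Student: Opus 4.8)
My plan is to prove the four conditions equivalent by establishing the single cycle of implications (i)$\Rightarrow$(ii)$\Rightarrow$(iii)$\Rightarrow$(iv)$\Rightarrow$(i). Three of these four links will be nearly free, since they are restatements or direct applications of results already proved; the only link carrying real content is (i)$\Rightarrow$(ii), so I would handle that one carefully and dispatch the remaining three by citation. Throughout I would set $n = \#V(G)$ and keep the colour-nested hypothesis $\chi_N(G) = \chi(G)$ in reserve, invoking it only where it is actually needed.

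The heart of the argument is (i)$\Rightarrow$(ii). Here I would fix an arbitrary vertex $v$ and run a chromatic squeeze. Nested-criticality gives $\chi_N(G-v) < \chi_N(G)$, hence $\chi_N(G-v) \le \chi_N(G) - 1$. Feeding this into the general inequality $\chi(G-v) \le \chi_N(G-v)$ from Remark~\ref{rem:singleton} (applied to $G-v$) and then using colour-nestedness $\chi_N(G) = \chi(G)$, I obtain $\chi(G-v) \le \chi_N(G-v) \le \chi_N(G)-1 = \chi(G)-1$. On the other side, deleting one vertex can drop the chromatic number by at most one, since any proper colouring of $G-v$ extends to $G$ by giving $v$ a fresh colour; thus $\chi(G-v) \ge \chi(G)-1$. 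The two bounds force $\chi(G-v) = \chi(G)-1$, so $v$ is critical, and as $v$ was arbitrary $G$ is vertex-critical.

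The remaining implications are routine. For (ii)$\Rightarrow$(iii), Corollary~\ref{cor:critical-bound} states exactly that a vertex-critical graph satisfies $\chi_N(G) = \#V(G)$. For (iii)$\Rightarrow$(iv), the colour-nested hypothesis converts $\chi_N(G) = \#V(G)$ into $\chi(G) = \#V(G) = n$, and the only graph on $n$ vertices with chromatic number $n$ is $K_n$. For (iv)$\Rightarrow$(i), the bounds of Remark~\ref{rem:singleton} squeeze $\chi_N(K_n)$ between $\chi(K_n) = n$ and $\#V(K_n) = n$, so $\chi_N(K_n) = \#V(K_n)$ and Proposition~\ref{pro:stab-crit} immediately gives that $K_n$ is nested-critical, closing the cycle. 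I expect the main obstacle to be precisely the squeeze in (i)$\Rightarrow$(ii): the subtlety is that nested-criticality only guarantees $\chi_N$ strictly drops, and one must verify that colour-nestedness transfers this into a drop of $\chi$ that is forced to be \emph{exactly} one rather than merely nonzero. I would also note, as a sanity check, that the colour-nested hypothesis is genuinely used only in the links (i)$\Rightarrow$(ii) and (iii)$\Rightarrow$(iv), which is consistent with its role as a standing assumption of the proposition.
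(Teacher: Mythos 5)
Your proposal is correct and follows essentially the same route as the paper: the identical cycle (i)$\Rightarrow$(ii)$\Rightarrow$(iii)$\Rightarrow$(iv)$\Rightarrow$(i), with the same chromatic squeeze $\chi(G-v) \leq \chi_N(G-v) < \chi_N(G) = \chi(G)$ for (i)$\Rightarrow$(ii), Corollary~\ref{cor:critical-bound} for (ii)$\Rightarrow$(iii), the colour-nested hypothesis forcing $\chi(G) = \#V(G)$ for (iii)$\Rightarrow$(iv), and Proposition~\ref{pro:stab-crit} for (iv)$\Rightarrow$(i). Your write-up merely spells out in more detail the squeeze that the paper compresses into a single sentence.
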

\begin{proof}
    Since $\chi(G) = \chi_N(G)$, and $\chi(H) \leq \chi_N(H)$ in general, we clearly have condition (i)
    implying condition (ii), and the latter implies condition (iii) by Corollary~\ref{cor:critical-bound}.
    Condition (iii) implies $\chi(G) = \#V(G)$, which is equivalent to $G = K_{\#V(G)}$.
    Finally, $K_{\#V(G)}$ is nested-critical by Proposition~\ref{pro:stab-crit}.
\end{proof}

On the other hand, if $G$ is colour-nested, then $G - v$ need not be colour-nested.  The graph $G$ in
Figure~\ref{fig:vertex-deletion} is colour-nested, though $G - 5$ and $G - 6$ are not colour-nested.

\subsection{A topological remark}\label{sub:topology}~

Consider the \emph{homomorphism complex} $\Hom(H, G)$ coming from the homomorphisms from
$H$ to $G$, where $G$ and $H$ are finite simple graphs; see~\cite[Definition~3.2]{Ko}.  
Kozlov showed~\cite[Theorem~3.3]{Ko} that $\Hom(H, G)$ collapses onto $\Hom(H, G - u)$ if
$u$ is a weak duplicate of some other vertex $v$ in $G$.  Thus if $C_1 \ddd C_k$ is a nested
$k$-colouring of $G$, and $G'$ is an induced subgraph of $G$ with vertex set $\{v_1, \ldots, v_k\}$,
where $v_i$ is a minimal element of $C_i$ under the weak duplicate preorder, then $\Hom(H, G)$
collapses onto $\Hom(H, G')$, and so $\Hom(H, G)$ and $\Hom(H, G')$ have the same simple homotopy type.

This is particularly interesting as the neighbourhood complex of $G$, i.e., the simplicial
complex of subsets of $V(G)$ which have a common neighbour, is homotopy equivalent to $\Hom(K_2, G)$.
Thus Lov\'asz's lower bound on the chromatic number~\cite[Theorem~2]{Lo} can be interpreted as
$\conn \Hom(K_2, G) \leq \chi(G) - 3$, where $\conn{X}$ is the connectivity of the complex $X$.
We note that this lower bound is strict in the case of Kneser graphs.

Hence we see that there exists an induced subgraph $G'$ of $G$ on $\chi_N(G)$ vertices
such that $\Hom(K_2, G') \leq \chi(G) - 3$.  Thus if $\chi_N(G) < \#V(G)$, then $G$ has more
redundancy than necessary for such topological bounds on the chromatic number to be useful.
Indeed, it is this redundant and recursive nature that is exploited in the associated 
algebras studied in~\cite{Co-UFI}.

Further, recall that the \emph{independence complex} $\Ind(G)$ of a finite simple graph $G$ is the
simplicial complex with faces given by the independent sets of $G$.  Engstr\"om showed~\cite[Lemma~3.2]{En}
that $\Ind(G)$ collapses onto $\Ind(G - v)$ if $v$ is weakly duplicated by some other vertex $u$.
More generally, this implies that if $C_1 \ddd C_k$ is a nested $k$-colouring of $G$, and $G''$ is an
induced subgraph of $G$ with vertex set $\{u_1, \ldots, u_k\}$, where $u_i$ is a maximal element
of $C_i$ under the weak duplicate preorder, then $\Ind(G)$ collapses onto $\Ind(G'')$, and so
$\Ind(G)$ and $\Ind(G'')$ have the same simple homotopy type.  Again, this emphasises the redundant
structure present in graphs with $\chi_N(G) < \#V(G)$.

We note that the de-duplicate graph $G^\star$ of $G$ can be constructed in both of these fashions.
In particular, let $C_1 \ddd C_k$ be the nested $k$-colouring such that each colour-class is an
equivalence class of duplicates.  Selecting an induced subgraph of $G$ with precisely one element
from each colour class generates a graph isomorphic to $G^\star$.  Hence $\Hom(H, G)$ and
$\Hom(H, G^\star)$ have the same simple homotopy type, as do $\Ind(G)$ and $\Ind(G^\star)$.

\section{Behaviour of the nested chromatic number}\label{sec:behaviour}

Now we consider the behaviour of the nested chromatic number under various graph operations.

\subsection{Mycielski's construction}\label{sub:mycielski}~

Let $G$ be a simple graph on $V(G) = \{u_1, \ldots, u_n\}$.  The \emph{Mycielski graph of $G$} is the
graph $\mu(G)$ with vertex set $\{u_1, \ldots, u_n, v_1, \ldots, v_n, w\}$ with edge set
\[
    E(\mu(G)) = E(G) \dcup \{ \{u_i, v_j\} \st u_j \in N_G(u_i)\} \dcup \{ \{w, v_i\} \st 1 \leq i \leq n \}.
\]
This construction was first described by Mycielski~\cite{My}, wherein he proved that $\chi(\mu(G)) = \chi(G) + 1$,
and further that $\mu(G)$ is triangle-free if $G$ is triangle-free.  We further note that $G$ is an induced
subgraph of $\mu(G)$.

Unlike the chromatic number, which only increases by one, the nested chromatic number doubles and increases by one
under the Mycielski construction.

\begin{proposition}\label{pro:mycielski}
    If $G$ is a finite simple graph, then $\chi_N(\mu(G)) = 2\chi_N(G) + 1$.
\end{proposition}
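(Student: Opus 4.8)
The plan is to prove the two inequalities $\chi_N(\mu(G)) \le 2\chi_N(G)+1$ and $\chi_N(\mu(G)) \ge 2\chi_N(G)+1$ separately, after first recording the neighbourhoods in $\mu(G)$. Writing the vertices as $u$-, $v$-, and $w$-vertices and identifying $N_G(u_i)$ with a set of $u$-vertices, the edge set gives $N_{\mu(G)}(u_i) = N_G(u_i) \cup \{v_j : u_j \in N_G(u_i)\}$, $N_{\mu(G)}(v_i) = N_G(u_i) \cup \{w\}$, and $N_{\mu(G)}(w) = \{v_1, \ldots, v_n\}$. Everything flows from these three formulas.

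For the upper bound I would take an optimal nested colouring $C_1 \ddd C_k$ of $G$, with $k = \chi_N(G)$, and lift it to the partition $U_1 \ddd U_k \dcup V_1 \ddd V_k \dcup \{w\}$ of $\mu(G)$, where $U_i = \{u_j : u_j \in C_i\}$ and $V_i = \{v_j : u_j \in C_i\}$. Each $U_i$ is independent because $C_i$ is, and each $V_i$ is independent because the $v$-layer carries no edges. The neighbourhood formulas show that $u_i$ is a weak duplicate of $u_j$ in $\mu(G)$ exactly when it is in $G$, and likewise for $v_i, v_j$; hence the order inherited from $C_i$ witnesses that $U_i$ and $V_i$ are nested. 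This exhibits a nested colouring with $2k+1$ classes.

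The lower bound is the crux. The key structural claim is that, writing $\le$ for the weak duplicate preorder and $\le_G$ for that of $G$, in $\mu(G)$ one has $u_i \le u_j$ if and only if $u_i \le_G u_j$, one has $v_i \le v_j$ if and only if $u_i \le_G u_j$, no $u$-vertex is comparable to any $v$-vertex, and $w$ is incomparable to every other vertex. These follow from the formulas: a shadow $v_k$ lies in some $N_{\mu(G)}(u_i)$ but in no $N_{\mu(G)}(v_j)$ and not in $N_{\mu(G)}(w)$, while $w$ lies in every $N_{\mu(G)}(v_j)$ but in no $N_{\mu(G)}(u_i)$. Granting this, every nested independent set of $\mu(G)$, being a chain in the preorder, consists entirely of $u$-vertices, entirely of $v$-vertices, or is the singleton $\{w\}$. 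In any nested colouring $\mathcal{D}$ of $\mu(G)$, the pure-$u$ classes restrict, via Proposition~\ref{pro:induced-subgraph} and the fact that the $u$-layer induces $G$, to a nested colouring of $G$, so there are at least $\chi_N(G)$ of them; the pure-$v$ classes, transported along $v_i \mapsto u_i$, likewise form a nested colouring of $G$ by the second equivalence, contributing at least $\chi_N(G)$ more; and at least one class is $\{w\}$. As these three families are disjoint, $\mathcal{D}$ has at least $2\chi_N(G)+1$ classes, completing the proof.

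The main obstacle is establishing the incomparability between $u$- and $v$-vertices, specifically the direction that $v_j$ is not a weak duplicate of $u_i$: this needs $N_G(u_i) \neq \emptyset$, since an isolated vertex of $G$ has empty neighbourhood and is a weak duplicate of everything. Thus the one point requiring separate, easy attention is the behaviour of isolated vertices of $G$ (equivalently, ensuring every $u_i$ and $v_i$ has a nonempty neighbourhood in $\mu(G)$); once that degenerate case is dispatched, the block decomposition above closes both bounds.
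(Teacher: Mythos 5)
Your outline is, step for step, the paper's own proof: the same three neighbourhood formulas (yours are in fact more accurate --- the paper's proof misstates $N_{\mu(G)}(v_i)$ by omitting $w$), the same doubled-plus-$\{w\}$ lift for the upper bound, and the same layer decomposition for the lower bound, with pure-$v$ classes transported to $G$ along $v_i \mapsto u_i$. One slip in your structural paragraph: a shadow $v_k$ certainly \emph{does} lie in $N_{\mu(G)}(w) = \{v_1, \ldots, v_n\}$, so incomparability of $w$ with a non-isolated $u_i$ must instead use that $v_i \in N_{\mu(G)}(w) \setminus N_{\mu(G)}(u_i)$ (because $u_i \notin N_G(u_i)$) in one direction, and that $N_{\mu(G)}(u_i)$ contains an original vertex $u_t$ while $N_{\mu(G)}(w)$ contains none in the other. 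That is a routine repair.

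The deferred isolated-vertex case, however, is a genuine gap, and not because you were lazy: it \emph{cannot} be dispatched, since the proposition as stated is false for edgeless graphs. For $G = K_1$ one has $\mu(G) = K_2 \dcup K_1$, and $\{u_1, v_1\} \dcup \{w\}$ is a nested colouring (the isolated $u_1$ has empty neighbourhood, hence is a weak duplicate of $v_1$), so $\chi_N(\mu(K_1)) = 2 \neq 3 = 2\chi_N(K_1) + 1$; the same happens for every edgeless $G$. The paper's proof shares this defect: its ``without loss of generality'' rearrangement into pure classes silently \emph{increases} the number of classes precisely when an isolated $u$-vertex shares a class with a $v$-vertex or with $w$, which is exactly what the colouring above does. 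When $G$ has at least one edge, the dispatch you promised does go through: given any nested colouring of $\mu(G)$, use Remark~\ref{rem:isolated-vertices} to move every isolated vertex into one fixed class containing a non-isolated $u$-vertex (such a class exists); the colouring stays nested since isolated vertices are weak duplicates of everything, the number of classes does not grow (classes that empty out are discarded), and every class is now pure, so your count of at least $\chi_N(G)$ pure-$u$ classes, at least $\chi_N(G)$ pure-$v$ classes, and $\{w\}$ closes the bound. So to finish you should add this rearrangement, and either restrict the statement to graphs with an edge or record that the formula fails without that hypothesis.
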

\begin{proof}
    The vertices of $\mu(G)$ have the open neighbourhoods:
    $N_{\mu(G)}(u_i) = \{ u_j, v_j \st u_j \in N_G(u_i) \}$, $N_{\mu(G)}(v_i) = N_G(u_i)$,
    and $N_{\mu(G)}(w) = \{v_1, \ldots, v_n\}$.

    Let $\mC$ be any nested colouring $C_1 \ddd C_k$ of $G$.  Each $C_i$ remains a nested independent
    set in $\mu(G)$.  Moreover, substituting $v_j$ for $u_j$ in each $C_i$ generates a nested independent set $C'_i$
    in $\mu(G)$.  Thus $C_1 \ddd C_k \dcup C'_1 \ddd C'_k \dcup \{w\}$ is a nested colouring of $\mu(G)$, and so
    $\chi_N(\mu(G)) \leq 2\chi_N(G)+1$.

    Isolated vertices of $G$ remain isolated in $\mu(G)$, and none of the $v_j$ nor $w$ can be isolated.
    If $u_i$ is not an isolated vertex of $G$, then $u_i$ is not a weak duplicate of any $v_j$ since the $v_j$ are
    not adjacent to any other $v_t$ in $\mu(G)$, and every $v_j$ is not a weak duplicate of $u_i$ since only the $v_j$
    are adjacent to $w$.  Moreover, with the exception of isolated vertices, $w$ is not a weak duplicate of or weakly
    duplicated by any other vertex.  Thus any nontrivial nested independent set of $\mu(G)$ that does not
    contain isolated vertices is contained exclusively in one of $\{u_1, \ldots, u_n\}$, $\{v_1, \ldots, v_n\}$, and $\{w\}$.

    Let $\mC$ be any nested colouring $C_1 \ddd C_k$ of $\mu(G)$.  By the preceding paragraph, we may assume without
    loss of generality that $C_1 \ddd C_i = \{u_1, \ldots, u_n\}$, $C_{i+1} \ddd C_{k-1} = \{v_1, \ldots, v_n\}$,
    and $C_k = \{w\}$.  Thus $C_1 \ddd C_i$ induces a nested colouring on $G$, and so $i \geq \chi_N(G)$.
    Similarly, substituting $u_j$ for $v_j$ in $C_{i+1} \ddd C_{k-1}$, we have another nested colouring of $G$, and so
    $k - i - 1 \geq \chi_N(G)$.  Hence $k \geq 2\chi_N(G) + 1$.
\end{proof}

\begin{example}\label{exa:mycielski}
    In~\cite{My}, Mycielski presented the family $M_i$ recursively defined by $M_2 = K_2$ and $M_{k+1} = \mu(M_k)$,
    for $k \geq 2$.  Since $M_2$ is a triangle-free graph with $\chi(M_2) = 2$, $M_k$ is a triangle-free graph
    with $\chi(M_k) = k$.  For $2 \leq k \leq4$, $M_k$ is the triangle-free graph with fewest vertices such that
    $\chi(M_k) = k$.

    The nested chromatic number of $M_2 = K_2$ is $\#V(M_2) = 2$.  By Proposition~\ref{pro:mycielski}, it follows
    that $\chi_N(M_k) = \#V(M_k) = 2^{k-2} \cdot 3 - 1$.
\end{example}

\subsection{Disjoint union}\label{sub:disjoint}~

The chromatic number of a graph is the maximum of the chromatic numbers of the components of the graph.
The nested chromatic number, on the other hand, is additive along the components.

\begin{proposition}\label{pro:components}
    Let $G$ be a finite simple graph without isolated vertices, and let $G_1, \ldots, G_t$ be the
    components of $G$.  The partition $\mC$ of $V(G)$ is a nested colouring of $G$ if and only if
    $\mC = \mC_1 \ddd \mC_t$, where $\mC_1, \ldots, \mC_t$ are nested colourings of $G_1, \ldots, G_t$, respectively.

    In particular, $\chi_N(G) = \chi_N(G_1) + \cdots + \chi_N(G_t)$.
\end{proposition}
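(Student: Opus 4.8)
The plan is to reduce everything to a single structural observation: when $G$ has no isolated vertices, the weak duplicate preorder never compares two vertices lying in distinct components. First I would prove this claim. Suppose $u$ and $v$ lie in different components of $G$ and, say, $u$ is a weak duplicate of $v$, so that $N_G(u) \subset N_G(v)$. Since $G$ has no isolated vertices, $N_G(u)$ is nonempty, so there is a vertex $w \in N_G(u) \subset N_G(v)$. But $w \in N_G(u)$ forces $w$ into the component of $u$, while $w \in N_G(v)$ forces $w$ into the component of $v$; hence $u$ and $v$ share a component, a contradiction. Thus across-component pairs are incomparable in the preorder.

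Next I would use this to show that every colour class of a nested colouring of $G$ is contained in a single component. A colour class is a nested independent set, so its vertices are linearly ordered by the weak duplicate preorder and in particular pairwise comparable; by the claim, pairwise comparable vertices lie in a common component, so the whole class sits inside one $G_i$. Moreover, for any $v \in V(G_i)$ all neighbours of $v$ lie in $G_i$, so $N_G(v) = N_{G_i}(v)$; hence a set of vertices of $G_i$ is a nested independent set of $G$ if and only if it is one of $G_i$.

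These two facts give the forward direction. Given a nested colouring $\mC$ of $G$, collect into $\mC_i$ exactly those colour classes contained in $G_i$; since each class lies in a unique component, $\mC = \mC_1 \ddd \mC_t$ and each $\mC_i$ is a partition of $V(G_i)$ into nested independent sets, i.e.\ a nested colouring of $G_i$. Conversely, if $\mC_i$ is a nested colouring of each $G_i$, then $\mC_1 \ddd \mC_t$ partitions $V(G)$; each class is an independent set of $G$ (there are no edges between components) and, by the neighbourhood identity above, is nested in $G$, so the union is a nested colouring of $G$. Finally, the cardinality statement follows by taking, on each component, an optimal nested colouring: minimising the total number of classes is achieved by minimising on each component independently, whence $\chi_N(G) = \chi_N(G_1) + \cdots + \chi_N(G_t)$.

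The only real obstacle is the opening claim, which is exactly where the no-isolated-vertices hypothesis is needed; without it Remark~\ref{rem:isolated-vertices} shows isolated vertices are weak duplicates of every vertex, so they could be absorbed into other components' colour classes and additivity would fail. Everything after the claim is bookkeeping built on the identity $N_G(v) = N_{G_i}(v)$ for $v \in V(G_i)$.
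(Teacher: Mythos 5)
Your proof is correct and follows essentially the same route as the paper's: the paper's (much terser) argument likewise rests on the identity $N_{G_i}(v) = N_G(v)$ for $v \in V(G_i)$ and on non-empty neighbourhoods forcing each colour class into a single component. You have simply spelled out in full the incomparability claim and the bookkeeping that the paper leaves implicit.
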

\begin{proof}
    If $v \in G_i$, then $N_{G_i}(v) = N_G(v)$.  Since there are no isolated vertices, none of these
    neighbourhoods are empty.  Hence no colour class can contain vertices from two separate
    components of $G$.  Furthermore, since the neighbourhoods do not change, the nesting of
    a colour class does not change when it is considered in $G$ or a component.
\end{proof}

Hence the disjoint union is also additive.

\begin{corollary}\label{cor:disjoint-union}
    If $G_1, \ldots, G_t$ are finite simple graphs without isolated vertices, then
    \[
        \chi_N(G_1 \ddd G_t) = \chi_N(G_1) + \cdots + \chi_N(G_t).
    \]

    In particular, if $G$ is a nontrivial finite simple graph and $t \geq 1$, then $\chi_N(tG) = t\chi_N(G)$.
\end{corollary}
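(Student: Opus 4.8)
The plan is to derive this immediately from Proposition~\ref{pro:components}, which already establishes additivity of the nested chromatic number along connected components, together with the elementary observation that forming a disjoint union simply concatenates the component structures of the summands. Since the statement is labelled a corollary, the right level of effort is to invoke Proposition~\ref{pro:components} as a black box and handle only the bookkeeping.

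First I would set $H = G_1 \ddd G_t$ and record two structural facts. Because each $G_i$ has no isolated vertices, neither does $H$, so Proposition~\ref{pro:components} applies to $H$. Moreover, since the $G_i$ share no vertices, for any vertex $v$ lying in $G_i$ one has $N_H(v) = N_{G_i}(v)$; consequently the collection of connected components of $H$ is exactly the union of the collections of connected components of $G_1, \ldots, G_t$, and a connected component of $H$ contained in $G_i$ is precisely a connected component of $G_i$. Next I would apply Proposition~\ref{pro:components} twice: applied to $H$ it expresses $\chi_N(H)$ as the sum of the nested chromatic numbers over all connected components of $H$; grouping those components according to which summand $G_i$ they belong to, and applying the proposition again to each $G_i$ (again legitimate, as each $G_i$ is free of isolated vertices), collapses each group's contribution to $\chi_N(G_i)$. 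Summing over $i$ yields $\chi_N(H) = \chi_N(G_1) + \cdots + \chi_N(G_t)$. The final assertion is then the special case $G_1 = \cdots = G_t = G$.

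I do not expect a genuine obstacle here; the only point requiring any care is reconciling the two framings, since Proposition~\ref{pro:components} is phrased in terms of the connected components of a \emph{single} graph, whereas the summands $G_i$ in the corollary may themselves be disconnected. This is dispatched entirely by the remark above that the components of $H$ lying inside $G_i$ coincide with the components of $G_i$. As an aside, one could avoid routing through connected components altogether: the argument underlying Proposition~\ref{pro:components} uses only that vertices in distinct pieces have disjoint nonempty open neighbourhoods, so no colour class of a nested colouring of $H$ can meet two different summands $G_i$; this gives the additivity directly, but invoking the proposition is the cleaner route for a corollary.
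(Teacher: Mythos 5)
Your proposal is correct and matches the paper's route exactly: the paper states this corollary as an immediate consequence of Proposition~\ref{pro:components} (with no written proof), relying on precisely your observation that the components of $G_1 \ddd G_t$ are the components of the summands. Your extra care in reconciling possibly disconnected summands with the component-based phrasing of Proposition~\ref{pro:components} is the right bookkeeping and introduces nothing beyond what the paper implicitly assumes.
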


Moreover, a graph with small nested chromatic number and no isolated vertices is connected.

\begin{corollary}\label{cor:chi-s-3}
    Let $G$ be a finite simple graph with no isolated vertices.  If $\chi_N(G) \leq 3$,
    then $G$ is connected.
\end{corollary}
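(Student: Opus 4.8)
The plan is to reduce everything to the additivity of the nested chromatic number over connected components, which is exactly Corollary~\ref{cor:disjoint-union}, combined with the trivial lower bound $\chi(H) \leq \chi_N(H)$ from Remark~\ref{rem:singleton}. Write $G_1, \ldots, G_t$ for the components of $G$. The whole argument hinges on showing that each component contributes at least $2$ to the total, after which a simple counting estimate forces $t = 1$.

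First I would observe that since $G$ has no isolated vertices, every component $G_i$ must contain at least two vertices: a single-vertex component would itself be an isolated vertex of $G$. Hence each $G_i$ contains at least one edge, so $\chi(G_i) \geq 2$, and therefore $\chi_N(G_i) \geq \chi(G_i) \geq 2$ by Remark~\ref{rem:singleton}. Note that each $G_i$ inherits the no-isolated-vertices property from $G$, so Corollary~\ref{cor:disjoint-union} applies directly.

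Next I would apply Corollary~\ref{cor:disjoint-union} to conclude that $\chi_N(G) = \chi_N(G_1) + \cdots + \chi_N(G_t) \geq 2t$. If $G$ were disconnected, then $t \geq 2$, giving $\chi_N(G) \geq 4$ and contradicting the hypothesis $\chi_N(G) \leq 3$. Thus $t = 1$ and $G$ is connected.

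There is no genuine obstacle here; the only point requiring care is verifying that each component really does carry an edge, and this is immediate from the assumption that $G$ has no isolated vertices. Indeed, the same estimate yields the more general statement that a finite simple graph with no isolated vertices and $\chi_N(G) \leq 2m+1$ has at most $m$ components, of which the present corollary is the case $m = 1$.
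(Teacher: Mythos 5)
Your proof is correct and follows exactly the argument the paper intends: the corollary is stated without proof precisely because it is immediate from the additivity in Proposition~\ref{pro:components}/Corollary~\ref{cor:disjoint-union} together with $\chi_N(G_i) \geq \chi(G_i) \geq 2$ for each component. Your closing observation that $\chi_N(G) \leq 2m+1$ forces at most $m$ components is a valid and natural strengthening.
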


\subsection{Join}\label{sub:join}~

Let $G$ and $H$ be finite simple graphs.  The \emph{join of $G$ and $H$} is the graph
$G \vee H$ with vertex set $V(G) \dcup V(H)$, where all edges of $G$ and $H$ are preserved
and every vertex in $V(G)$ is adjacent to every vertex in $V(H)$.
In particular, the open neighbourhoods of $g$ in $V(G)$ and $h$ in $V(H)$ are
\[
    N_{G \vee H}(g) = N_G(g) \dcup V(H)~\mbox{~and~}~N_{G \vee H}(h) = N_H(h) \dcup V(G),
\]
respectively.

Both the chromatic number and the nested chromatic number are additive across joins.

\begin{proposition}\label{pro:join}
    Let $G$ and $H$ be finite simple graphs.  The partition $\mC$ of $V(G) \dcup V(H)$
    is a nested colouring of $G \vee H$ if and only if $\mC = \mC_G \dcup \mC_H$, where
    $\mC_G$ and $\mC_H$ are nested colourings of $G$ and $H$, respectively.

    In particular, $\chi_N(G \vee H) = \chi_N(G) + \chi_N(H)$.
\end{proposition}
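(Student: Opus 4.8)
The plan is to exploit the fact that in a join every independent set must lie entirely on one side. First I would observe that since every vertex of $G$ is adjacent to every vertex of $H$ in $G \vee H$, no independent set of $G \vee H$ can contain both a vertex of $V(G)$ and a vertex of $V(H)$. Consequently each colour class of any proper colouring of $G \vee H$—in particular each class of a nested colouring $\mC$—is contained either in $V(G)$ or in $V(H)$. This lets me split $\mC$ unambiguously into the sub-collection $\mC_G$ of classes lying in $V(G)$ and the sub-collection $\mC_H$ of classes lying in $V(H)$, so that $\mC = \mC_G \dcup \mC_H$ and $\mC_G$, $\mC_H$ are partitions of $V(G)$ and $V(H)$, respectively.

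The second step is to check that the weak duplicate preorder, when restricted to $V(G)$, is unchanged by passing from $G$ to $G \vee H$. Using the given formula $N_{G \vee H}(g) = N_G(g) \dcup V(H)$, for two vertices $g, g' \in V(G)$ I would note that $N_{G \vee H}(g) \subseteq N_{G \vee H}(g')$ holds if and only if $N_G(g) \subseteq N_G(g')$, since the common summand $V(H)$ is disjoint from $V(G)$ and may be cancelled. Thus $g$ is a weak duplicate of $g'$ in $G \vee H$ precisely when it is in $G$, and the analogous statement holds on the $H$ side. Because a colour class contained in $V(G)$ is nested in $G \vee H$ exactly when its vertices admit a linear order by weak duplication in $G \vee H$, and that relation agrees with the one in $G$, such a class is nested in $G \vee H$ if and only if it is nested in $G$.

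Combining these two observations yields the biconditional: $\mC$ is a nested colouring of $G \vee H$ if and only if every class is a nested independent set contained in a single side, which in turn happens if and only if $\mC_G$ and $\mC_H$ are nested colourings of $G$ and $H$. For the cardinality statement, the number of colour classes of $\mC$ equals $\#\mC_G + \#\mC_H \geq \chi_N(G) + \chi_N(H)$, and equality is realised by taking the union of optimal nested colourings of $G$ and of $H$, which is nested by the forward direction just established. Hence $\chi_N(G \vee H) = \chi_N(G) + \chi_N(H)$.

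I expect no serious obstacle here; the only points requiring care are the cancellation of $V(H)$ in the neighbourhood containment, which relies on $V(G)$ and $V(H)$ being disjoint, and the observation that each colour class must land wholly on one side. Both are immediate from the definition of the join, so the argument is essentially a bookkeeping of these two facts.
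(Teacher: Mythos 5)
Your proposal is correct and follows essentially the same route as the paper's proof: first observing that the complete bipartition of edges between $V(G)$ and $V(H)$ forces every colour class onto one side, then noting that adjoining the common set $V(H)$ (resp.\ $V(G)$) to every neighbourhood leaves containments among neighbourhoods, and hence nesting, unchanged. The paper states these two facts tersely; you simply spell out the cancellation argument and the counting step in more detail.
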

\begin{proof}
    As every vertex of $G$ is adjacent to every vertex of $H$, no colour class can contain
    vertices from both $G$ and $H$.  Moreover, since all the vertices of $G$ (resp., $H$)
    have their neighbourhoods modified in a uniform way, nesting is not changed.
\end{proof}

This implies, in particular, that adding a dominating vertex, i.e., a vertex adjacent to
every other vertex, to a graph increases the nested chromatic number by precisely $1$.

\begin{example}\label{exa:dominating}
    Many common families of graphs are constructed by adding a dominating vertex to another
    common graph.  Consider the following examples.
    \begin{enumerate}
        \item The star graph $S_n$ is the trivial graph on $n$ vertices with a dominating
            vertex added.  Hence $\chi_N(S_n) = 2$ for $n \geq 1$.
        \item The windmill graph $W{\kern -.2em}d_{k,n}$ is $n K_k$ with a dominating vertex added.
            Hence \[\chi_N(W{\kern -.2em}d_{k,n}) = \chi_N(n K_k) + 1 = n \chi_N(K_k) + 1 = nk + 1.\]
        \item The wheel graph $W_n$ is the cycle graph $C_n$ with a dominating vertex added.
            Hence $\chi_N(W_n) = n+1$ for $n = 3$ and $n \geq 5$, and $\chi_N(W_4) = 3$,
            by Corollary~\ref{cor:cycle}.
    \end{enumerate}
\end{example}

Further, threshold graphs are colour-nested.  Recall that a threshold graph is a graph that
can be constructed from a single isolated vertex by repeatedly adding a new isolated vertex
or a new dominating vertex.

\begin{corollary}\label{cor:threshold}
    If $G$ is a threshold graph constructed with $d$ dominating steps, then
    $\chi_N(G) = \chi(G) = d+1$.
\end{corollary}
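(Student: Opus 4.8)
The plan is to induct on the number of steps in the construction of $G$, peeling off the last vertex added. The engine is Proposition~\ref{pro:join}: adding a dominating vertex is exactly forming the join $G' \vee K_1$, so it raises both $\chi$ and $\chi_N$ by $\chi_N(K_1) = 1$, while adding an isolated vertex leaves both invariant. For the base case I take $G = K_1$, the starting vertex, where $d = 0$ and $\chi_N(K_1) = \chi(K_1) = 1 = d+1$.

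For the inductive step, suppose $G$ is obtained from a threshold graph $G'$ — built with $d'$ dominating steps, for which the inductive hypothesis gives $\chi_N(G') = \chi(G') = d'+1$ — by one additional step. If that step adds a dominating vertex, then $G = G' \vee K_1$ and $d = d'+1$; Proposition~\ref{pro:join} yields $\chi_N(G) = \chi_N(G') + \chi_N(K_1) = (d'+1) + 1 = d+1$, and the analogous additivity of $\chi$ across the join gives $\chi(G) = d+1$ as well. If instead the step adds an isolated vertex, then $G = G' \dcup K_1$ and $d = d'$; here $\chi(G) = \chi(G')$ is immediate, and it remains to check that $\chi_N(G) = \chi_N(G')$.

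This last claim is the one point requiring care, and the closest thing to an obstacle. Monotonicity under induced subgraphs (Proposition~\ref{pro:induced-subgraph}) gives $\chi_N(G') \leq \chi_N(G)$, since $G'$ is recovered from $G$ by deleting the new isolated vertex. For the reverse inequality I invoke Remark~\ref{rem:isolated-vertices}: an isolated vertex is a weak duplicate of every vertex, so it may be dropped into any colour class of an optimal nested colouring of $G'$ without disturbing its nesting, exhibiting a nested colouring of $G$ with $\chi_N(G')$ classes and hence $\chi_N(G) \leq \chi_N(G')$. Combining the two inequalities gives $\chi_N(G) = \chi_N(G') = d'+1 = d+1$, which closes the induction and establishes $\chi_N(G) = \chi(G) = d+1$ in all cases. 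I expect the argument to be routine throughout; the only genuine subtlety is resisting the temptation to treat the isolated-vertex step as trivially harmless, and instead pinning down the equality with both the induced-subgraph bound and the remark.
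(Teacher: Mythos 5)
Your proposal is correct and follows essentially the same route as the paper: both arguments peel off the construction steps, using Remark~\ref{rem:isolated-vertices} to dismiss isolated-vertex steps and Proposition~\ref{pro:join} to show each dominating step raises $\chi_N$ by exactly $1$, giving $\chi_N(G) = d+1$. The only divergence is how $\chi(G) = d+1$ is concluded: you invoke join-additivity of the ordinary chromatic number (true, and asserted in the paper's prose, though not formally part of Proposition~\ref{pro:join}, which only treats $\chi_N$), whereas the paper avoids needing that fact by observing $\omega(G) = d+1$ and sandwiching $\omega(G) \leq \chi(G) \leq \chi_N(G) = d+1$.
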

\begin{proof}
    Adding isolated vertices does not change the nested chromatic number as seen in
    Remark~\ref{rem:isolated-vertices}.  By Proposition~\ref{pro:join}, adding
    a dominating vertex increases the nested chromatic number by $1$.  Hence the
    nested chromatic number of $G$ is one more than the number of dominating steps.
    That is $\chi_N(G) = d+1$.  Moreover, after $d$ dominating steps, the clique
    number of $G$, $\omega(G)$, is $d+1$.  Since $\omega(G) \leq \chi(G) \leq \chi_N(G)$,
    we have $\chi(G) = d+1$.
\end{proof}

\subsection{Direct product}\label{sub:direct-product}~

Let $G$ and $H$ be finite simple graphs.  The \emph{direct} (or \emph{tensor}) \emph{product of $G$ and $H$}
is the graph $G \times H$ with vertex set $V(G) \times V(H)$, where $(g,h)$ is adjacent to $(g',h')$ if and
only if $g$ is adjacent to $g'$ in $G$ and $h$ is adjacent to $h'$ in $H$.  In particular, the
open neighbourhood of $(g,h)$ in $G \times H$ is
\[
    N_{G \times H}(g,h) = N_G(g) \times N_H(h).
\]
Notice that $G \times K_1 \cong \overline{K_n}$, so $\chi_N(G \times K_1) = 1$.  Moreover,
$(G \dcup G') \times H = (G \times H) \dcup (G' \times H)$.  Thus following Section~\ref{sub:disjoint},
we need only consider finite simple graphs $G$ that are connected and have at least two vertices.

Let $(P, \leq_P)$ and $(Q, \leq_Q)$ be posets.  The \emph{direct product of $P$ and $Q$} is the poset
$(P \times Q, \leq_{P \times Q})$, where $(p, q) \leq_{P \times Q} (p', q')$ if and only if
$p \leq_P p'$ and $q \leq_Q q'$.  The weak duplicate poset of the direct product of two graphs is
the direct product of the weak duplicate posets of the graphs.

\begin{lemma}\label{lem:dirprod-poset}
    If $G$ and $H$ are duplicate-free connected finite simple graphs, neither of which is $K_1$,
    then $P_{G \times H} = P_G \times P_H$.
\end{lemma}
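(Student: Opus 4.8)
The plan is to compare the two posets directly on their common ground set $V(G) \times V(H)$. Both $P_{G \times H}$ and $P_G \times P_H$ have exactly this set underlying them, so it suffices to show their order relations agree. First I would translate each relation into an inclusion of neighbourhoods using the identity $N_{G \times H}(g,h) = N_G(g) \times N_H(h)$. In $P_{G \times H}$ the relation $(g,h) \leq (g',h')$ holds precisely when $N_G(g') \times N_H(h') \subseteq N_G(g) \times N_H(h)$, whereas in $P_G \times P_H$ the same relation unwinds, by definition of the poset product, to the pair of conditions $N_G(g') \subseteq N_G(g)$ and $N_H(h') \subseteq N_H(h)$. Thus the whole lemma reduces to a purely set-theoretic equivalence between these two formulations.

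The crucial observation is that the hypotheses guarantee every neighbourhood is nonempty: since $G$ is connected with at least two vertices, it has no isolated vertex, so $N_G(g) \neq \emptyset$ for every $g$, and likewise for $H$. I would then invoke the elementary fact that for \emph{nonempty} sets $A', B'$, the inclusion $A' \times B' \subseteq A \times B$ holds if and only if $A' \subseteq A$ and $B' \subseteq B$; the nontrivial forward direction follows by fixing witnesses $a \in A'$ and $b \in B'$ and noting that $(a,b) \in A \times B$ forces $a \in A$ and $b \in B$. Applying this with the four nonempty neighbourhoods above immediately identifies the two order relations, and the corresponding equality version identifies equalities of product neighbourhoods with componentwise equalities.

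Before this is legitimate I would also check that $P_{G \times H}$ is genuinely a poset, i.e., that $G \times H$ is duplicate-free: the same nonempty-product argument shows $N_G(g) \times N_H(h) = N_G(g') \times N_H(h')$ forces $N_G(g) = N_G(g')$ and $N_H(h) = N_H(h')$, which by the duplicate-freeness of $G$ and $H$ yields $(g,h) = (g',h')$. The only real obstacle is the pathological behaviour of products under inclusion when a factor is empty, since $A' \times B' \subseteq A \times B$ can hold vacuously whenever $A'$ or $B'$ is empty, which would destroy the equivalence; the assumption that neither graph is $K_1$, together with connectivity, is exactly what excludes isolated vertices and hence this degenerate case.
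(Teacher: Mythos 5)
Your proof is correct and takes essentially the same route as the paper's: both rest on the identity $N_{G \times H}(g,h) = N_G(g) \times N_H(h)$ and the observation that weak duplication in $G \times H$ is therefore componentwise. The paper compresses the whole argument to ``it is immediate,'' whereas you make explicit the one point where the hypotheses actually enter --- connectivity and the exclusion of $K_1$ force all neighbourhoods to be nonempty, so that an inclusion of product sets is equivalent to the componentwise inclusions (and likewise for equalities, giving duplicate-freeness of $G \times H$) --- which is a worthwhile clarification but not a different proof.
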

\begin{proof}
    Since $N_{G \times H}(g,h) = N_G(g) \times N_H(h)$, it is immediate that
    $(g, h)$ is a weak duplicate of $(g', h')$ in $G \times H$ if and only if
    $g$ is a weak duplicate of $g'$ in $G$ and $h$ is a weak duplicate of $h'$ in $H$.
    The claim follows immediately.
\end{proof}

As a consequence, the nested chromatic number of the direct product of graphs is bounded below by
the product of the nested chromatic numbers of the factors.  We note that the chromatic
number of the direct product of graphs is bounded above by the minimum of the chromatic numbers
of the factors (Hedetniemi's conjecture says equality holds).

\begin{proposition}\label{pro:dirprod}
    If $G$ and $H$ are connected finite simple graphs, neither of which is $K_1$, then
    \[
        \chi_N(G) \cdot \chi_N(H) \leq \chi_N(G \times H) \leq \min\{\#V(G) \cdot \chi_N(H), \chi_N(G) \cdot \#V(H)\}.
    \]

    In particular, if $\chi_N(H) = \#V(H)$, then $\chi_N(G \times H) = \chi_N(G) \cdot \chi_N(H)$.
\end{proposition}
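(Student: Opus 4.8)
The plan is to treat the two bounds separately, using the colouring interpretation for the upper bound and the poset interpretation for the lower bound. Throughout I would use that $G$ and $H$, being connected with at least two vertices, have no isolated vertices, so every open neighbourhood is nonempty. As observed in the proof of Lemma~\ref{lem:dirprod-poset}, this makes the containment $N_G(g') \times N_H(h') \subseteq N_G(g) \times N_H(h)$ equivalent to the pair of containments $N_G(g') \subseteq N_G(g)$ and $N_H(h') \subseteq N_H(h)$; in other words, the weak duplicate preorder on $V(G \times H)$ is exactly the product of the weak duplicate preorders on $V(G)$ and $V(H)$. This single observation drives both inequalities, and crucially it holds at the level of the preorder, independent of whether $G \times H$ is connected.

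For the upper bound I would build an explicit nested colouring. Fix an optimal nested colouring $C_1 \ddd C_k$ of $G$ with $k = \chi_N(G)$, and for each $1 \leq i \leq k$ and each $h \in V(H)$ consider the set $C_i \times \{h\}$. These sets partition $V(G) \times V(H)$ into $\chi_N(G) \cdot \#V(H)$ parts. Each part is independent, since $(g,h)$ and $(g',h)$ are never adjacent in $G \times H$ (a second coordinate is never adjacent to itself), and each part is nested: because $N_H(h)$ is nonempty, $(g,h)$ is a weak duplicate of $(g',h)$ precisely when $g$ is a weak duplicate of $g'$ in $G$, so the nesting of $C_i$ transfers verbatim. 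Hence $\chi_N(G \times H) \leq \chi_N(G) \cdot \#V(H)$, and by the symmetric construction $\chi_N(G \times H) \leq \#V(G) \cdot \chi_N(H)$; taking the minimum gives the stated upper bound.

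For the lower bound I would pass to posets. By Proposition~\ref{pro:de-dup} and Corollary~\ref{cor:width}, $\chi_N(G \times H)$ equals the width of $P_{(G \times H)^\star}$, which is the largest number of vertices of $G \times H$ that are pairwise incomparable and pairwise non-duplicate under the weak duplicate preorder. Let $\tilde A \subseteq V(G)$ and $\tilde B \subseteq V(H)$ be sets of representatives of maximum antichains of $P_{G^\star}$ and $P_{H^\star}$, so that $\#\tilde A = \chi_N(G)$ and $\#\tilde B = \chi_N(H)$. Using the product structure of the preorder, any comparability or duplication between two elements of $\tilde A \times \tilde B$ forces a comparability or duplication in each coordinate separately, which by the choice of $\tilde A$ and $\tilde B$ can only occur when the two elements coincide. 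Thus $\tilde A \times \tilde B$ is a set of $\chi_N(G) \cdot \chi_N(H)$ pairwise non-duplicate, pairwise incomparable vertices, giving $\chi_N(G \times H) \geq \chi_N(G) \cdot \chi_N(H)$.

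The main obstacle is the lower bound, and specifically the bookkeeping around duplicates. The antichain argument above is the cleanest route precisely because it works directly on the preorder, and therefore sidesteps the fact that $G \times H$ may be disconnected when both factors are bipartite; one cannot simply apply Lemma~\ref{lem:dirprod-poset} to $G \times H$ itself, since that lemma assumes connectedness. Finally, the \emph{in particular} statement is immediate: when $\chi_N(H) = \#V(H)$ the upper bound reads $\chi_N(G \times H) \leq \chi_N(G) \cdot \#V(H) = \chi_N(G) \cdot \chi_N(H)$, which together with the lower bound forces equality.
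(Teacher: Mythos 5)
Your proof is correct. The lower bound is established essentially as in the paper: the product of maximum antichains is an antichain of the product order; the only cosmetic difference is that you work at the level of the weak duplicate preorder with representatives of duplicate classes, whereas the paper first reduces to duplicate-free graphs via Proposition~\ref{pro:de-dup} and then invokes Lemma~\ref{lem:dirprod-poset} and Corollary~\ref{cor:width}. Your upper bound, however, goes a genuinely different way: you exhibit an explicit nested colouring by the sets $C_i \times \{h\}$, using that $(g,h)$ and $(g',h)$ are never adjacent in $G \times H$ and that nesting in the first coordinate transfers verbatim when $N_H(h) \neq \emptyset$; in other words, you directly produce a chain cover of size $\chi_N(G)\cdot\#V(H)$. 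The paper instead argues on the antichain side of Dilworth's theorem: any antichain $A$ of $P_{G\times H}$ slices into the sets $A_g = \{h \st (g,h)\in A\}$, each an antichain of $P_H$, whence $\#A \leq \#V(G)\cdot\chi_N(H)$. The two arguments are dual and equally rigorous; yours is more constructive and does not need the width machinery for the upper bound at all, while the paper's keeps both inequalities inside the poset-width framework. One small correction: your worry that the paper's route is endangered because $G \times H$ may be disconnected is unfounded, since Lemma~\ref{lem:dirprod-poset} only requires $G$ and $H$ (not their product) to be connected, and Corollary~\ref{cor:width} has no connectedness hypothesis. That said, your preorder-level formulation has the merit of making this a non-issue on its face, and of avoiding the implicit identification $(G\times H)^\star \cong G^\star \times H^\star$ on which the paper's ``assume duplicate-free'' reduction silently relies.
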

\begin{proof}
    By Proposition~\ref{pro:de-dup} we may assume $G$ and $H$ are duplicate-free.  Thus
    by Lemma~\ref{lem:dirprod-poset} we have that $P_{G \times H} = P_G \times P_H$.  Hence
    $\chi_N(G \times H)$ is the width of $P_G \times P_H$, by Corollary~\ref{cor:width}.

    Clearly, if $A$ and $B$ are antichains of $P_G$ and $P_H$, respectively, then $A \times B$
    is an antichain of $P_{G \times H}$.  Hence the width of $P_{G \times H}$ is at least the
    product of the widths of $P_G$ and $P_H$, i.e., $\chi_N(G \times H) \geq \chi_N(G) \cdot \chi_N(H)$.

    On the other hand, let $A$ be any antichain of $P_{G \times H}$.  For each $g$ in $P_G$, let
    $A_g = \{h \in P_H \st (g, h) \in A\}$.  By construction, $A_g$ must be an antichain of $P_H$ for
    all $g \in P_G$.  This implies that $\#A_g \leq \chi_N(H)$ and so $\chi_N(G \times H) \leq \#V(G) \cdot \chi_N(H)$.
    As the graph direct product is commutative, we also then have $\chi_N(G \times H) \leq \chi_N(G) \cdot \#V(H)$
    by symmetry.
\end{proof}

Both bounds are achievable.

\begin{example}\label{exa:dirprod}
    Let $G = P_4$, and let $H$ be the graph on $V(H) = \{1,2,3,4\}$ with edge set $E(H) = \{\{1,2\},\{1,3\},\{2,3\},\{3,4\}\}$.
    In this case, $\chi_N(G) = 2$ and $\chi_N(H) = 3$.  However, $\chi_N(G \times H) = 8 = \chi_N(G) \cdot \#V(H)$.

    On the other hand, equality holds with the lower bound for the bipartite double cover $G \times K_2$ of $G$.  In particular,
    this implies that the crown graph on $2n$ vertices, i.e., $K_n \times K_2$, has nested chromatic number $2n$.
\end{example}

\subsection{Cartesian product}\label{sub:cartesian-product}~

Let $G$ and $H$ be finite simple graphs.  The \emph{Cartesian product of $G$ and $H$} is the graph
$G \gprod H$ with vertex set $V(G) \times V(H)$, where $(g,h)$ is adjacent to $(g',h')$ if and only
if either $g = g'$ and $h$ is adjacent to $h'$ in $H$ or $h = h'$ and $g$ is adjacent to $g'$ in $G$.
In particular, the open neighbourhood of $(g,h)$ in $G \gprod H$ is
\[
    N_{G \gprod H}(g,h) = \{g\} \times N_H(h) \dcup N_G(g) \times \{h\}.
\]

Notice that $G \gprod K_1$ is isomorphic to $G$, so $\chi_N(G \gprod K_1) = \chi_N(G)$.  Moreover,
$(G \dcup G') \gprod H = (G \gprod H) \dcup (G' \gprod H)$.  Thus following Section~\ref{sub:disjoint},
we need only consider finite simple graphs $G$ that are connected and have at least two vertices.

The weak duplicates generated in the Cartesian product come from leaves.

\begin{lemma}\label{lem:prod-dup}
    Let $G$ and $H$ be connected finite simple graphs, neither of which is $K_1$.
    The vertex $(g, h)$ is a weak duplicate of the distinct vertex $(g', h')$ in $G \gprod H$
    if and only if $N_G(g) = \{g'\}$ and $N_H(h) = \{h'\}$.
\end{lemma}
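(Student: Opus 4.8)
The plan is to prove both implications directly from the neighbourhood formula for the Cartesian product, namely $N_{G \gprod H}(g,h) = \{g\} \times N_H(h) \dcup N_G(g) \times \{h\}$. Throughout I will use that, since $G$ and $H$ are connected and neither is $K_1$, each of the open neighbourhoods $N_G(g)$, $N_G(g')$, $N_H(h)$, $N_H(h')$ is nonempty; this nonemptiness is precisely what drives the forward direction.

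For the backward implication, suppose $N_G(g) = \{g'\}$ and $N_H(h) = \{h'\}$. Then $N_{G \gprod H}(g,h) = \{(g,h'),(g',h)\}$. Since $g' \in N_G(g)$ forces $g \in N_G(g')$, and likewise $h \in N_H(h')$, a one-line check places both points inside $N_{G \gprod H}(g',h') = \{g'\} \times N_H(h') \dcup N_G(g') \times \{h'\}$: indeed $(g,h') \in N_G(g') \times \{h'\}$ and $(g',h) \in \{g'\} \times N_H(h')$. Hence $(g,h)$ is a weak duplicate of $(g',h')$.

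For the forward implication, assume $N_{G \gprod H}(g,h) \subset N_{G \gprod H}(g',h')$ with $(g,h) \neq (g',h')$, and split into cases according to whether $g = g'$ and whether $h = h'$. If $g = g'$, and so $h \neq h'$, then choosing any $x \in N_G(g)$ gives the point $(x,h)$ in the left-hand neighbourhood; since $x \neq g$ (no loops) and $h \neq h'$, this point lies in neither block of $N_{G \gprod H}(g,h')$, a contradiction. The case $h = h'$ is symmetric. Thus in fact $g \neq g'$ and $h \neq h'$, which is the substantive case. Here I test membership block by block: any point $(g,y)$ with $y \in N_H(h)$ can only land in the block $N_G(g') \times \{h'\}$ of the right-hand neighbourhood, which forces $g \in N_G(g')$ and $y = h'$; ranging over all $y$ and using nonemptiness of $N_H(h)$ yields $N_H(h) = \{h'\}$. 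Dually, every point $(x,h)$ with $x \in N_G(g)$ can only land in the block $\{g'\} \times N_H(h')$, forcing $x = g'$, whence $N_G(g) = \{g'\}$.

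The routine part is the backward direction and the bookkeeping of which block of the right-hand neighbourhood a given point can occupy. The main obstacle is handling the two degenerate cases $g = g'$ and $h = h'$ cleanly: there the conclusion $N_G(g) = \{g'\}$ is impossible, as it would demand a loop, so the argument must instead rule these cases out entirely — and this is exactly where the nonemptiness of the neighbourhoods, guaranteed by connectedness together with $G, H \neq K_1$, is indispensable.
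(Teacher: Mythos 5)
Your proof is correct and follows essentially the same route as the paper's: both work directly from the formula $N_{G \gprod H}(g,h) = \{g\} \times N_H(h) \dcup N_G(g) \times \{h\}$, use nonemptiness and loop-freeness of the neighbourhoods to rule out the degenerate cases $g = g'$ and $h = h'$, and then argue block by block that $\{g\} \times N_H(h)$ must land in $N_G(g') \times \{h'\}$ and $N_G(g) \times \{h\}$ in $\{g'\} \times N_H(h')$. Your treatment of the backward direction is slightly more explicit than the paper's (which folds it into a chain of equivalences), but the substance is identical.
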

\begin{proof}
    By definition, $(g, h)$ is a weak duplicate of $(g', h')$ if and only if
    \[
        N_{G \gprod H}(g, h) = \{g\} \times N_H(h) \dcup N_G(g) \times \{h\} \subset
        N_{G \gprod H}(g', h') = \{g'\} \times N_H(h') \dcup N_G(g') \times \{h'\}.
    \]

    If $g = g'$, then $(g, h)$ being a weak duplicate of $(g', h')$ forces $h = h'$, since $N_G(g)$ is
    nonempty and open.  Hence we may assume $g \neq g'$ and $h \neq h'$.  In this case, $(g, h)$ is a
    weak duplicate of $(g', h')$ if and only if $\{g\} \times N_H(h) \subset N_G(g') \times \{h'\}$ and
    $N_G(g) \times \{h\} \subset \{g'\} \times N_H(h'\}$, since $N_G(g)$ and $N_H(h)$ are nonempty and
    open.  The latter is equivalent to $N_H(h) = \{h'\}$ and $N_G(g) = \{g'\}$, again since the
    neighbourhoods are nonempty.
\end{proof}

Thus except $K_2 \gprod K_2 = C_4$, all Cartesian products of connected graphs are duplicate-free.

\begin{corollary}\label{cor:prod-dupfree}
    Let $G$ and $H$ be connected finite simple graphs, neither of which is $K_1$.
    The graph $G \gprod H$ is duplicate-free if and only if $G \neq K_2$ or $H \neq K_2$.
\end{corollary}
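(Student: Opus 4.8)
The plan is to characterise exactly when $G \gprod H$ admits a pair of duplicate vertices, and then read off the corollary from that characterisation. The starting observation is that two distinct vertices are \emph{duplicates} precisely when they have equal open neighbourhoods, i.e., when each is a weak duplicate of the other. So I would apply Lemma~\ref{lem:prod-dup} twice, once in each direction, to the pair $(g,h)$ and $(g',h')$.

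Concretely, first I would suppose $(g,h)$ and $(g',h')$ are distinct duplicate vertices of $G \gprod H$. Applying Lemma~\ref{lem:prod-dup} to the statement that $(g,h)$ is a weak duplicate of $(g',h')$ yields $N_G(g) = \{g'\}$ and $N_H(h) = \{h'\}$; applying it again to the statement that $(g',h')$ is a weak duplicate of $(g,h)$ yields $N_G(g') = \{g\}$ and $N_H(h') = \{h\}$. The conditions $N_G(g) = \{g'\}$ and $N_G(g') = \{g\}$ say that $g$ and $g'$ are adjacent, each has degree one, and neither is adjacent to anything else; since $G$ is connected, this forces $G$ to be the single edge $\{g,g'\}$, that is, $G = K_2$. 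The identical argument on the $H$-coordinate gives $H = K_2$. Hence a duplicate pair can exist only when $G = K_2$ and $H = K_2$, which is the contrapositive of the ``if'' half: if $G \neq K_2$ or $H \neq K_2$, then $G \gprod H$ is duplicate-free.

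For the reverse implication, I would invoke the already-noted identity $K_2 \gprod K_2 = C_4$ recorded just before the corollary. In $C_4$ the two antipodal pairs of vertices share a common open neighbourhood, so $C_4$ is not duplicate-free; thus when $G = K_2 = H$ the product does contain duplicates. This confirms the biconditional is sharp. I do not anticipate a genuine obstacle here: the entire content is packaged in Lemma~\ref{lem:prod-dup}, and the only subtlety is remembering that duplication is mutual weak duplication and that connectedness upgrades the degree-one conditions to the conclusion $G = K_2$.
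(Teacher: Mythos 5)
Your proposal is correct and follows essentially the same route as the paper: the paper's proof likewise applies Lemma~\ref{lem:prod-dup} in both directions to conclude that a duplicate pair forces $N_G(g) = \{g'\}$, $N_G(g') = \{g\}$, $N_H(h) = \{h'\}$, $N_H(h') = \{h\}$, and hence (by connectedness) $G = H = K_2$. Your explicit verification of the converse via $K_2 \gprod K_2 = C_4$ is folded into the paper's ``if and only if'' chain, so there is no substantive difference.
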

\begin{proof}
    By Lemma~\ref{lem:prod-dup}, $(g, h)$ is a duplicate of the distinct vertex $(g', h')$
    if and only if $N_G(g) = \{g'\}$, $N_G(g') = \{g\}$, $N_H(h) = \{h'\}$, and $N_H(h') = \{h\}$,
    i.e., $G = H = K_2$.
\end{proof}

Moreover, we can compute the nested chromatic number of Cartesian products of connected graphs.
Whereas the chromatic number of the Cartesian product of graphs is the maximum of the chromatic
numbers of the factors, the nested chromatic number is close to the product of the nested chromatic
numbers of the factors.  We recall that $[v]_\sim$ is the equivalence class of duplicate vertices in
$G$, defined in Definition~\ref{def:de-dup}.

\begin{proposition}\label{pro:cart-prod}
    Let $G$ and $H$ be connected finite simple graphs, neither of which is $K_1$.
    If $G \neq K_2$ or $H \neq K_2$, then
    \[
        \chi_N(G \gprod H) = \#V(G \gprod H) - \ell'(G) \cdot \ell'(H),
    \]
    where $\ell'(L) = \#\{[v]_\sim \st v \mbox{~is a leaf of $L$}\}$ for a graph $L$.

    In particular, if $G$ or $H$ has minimum vertex degree of at least $2$, then
    $\chi_N(G \gprod H) = \#V(G \gprod H)$.
\end{proposition}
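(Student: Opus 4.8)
The plan is to pass to the weak duplicate poset and compute its width. Since $G \neq K_2$ or $H \neq K_2$, Corollary~\ref{cor:prod-dupfree} tells us that $G \gprod H$ is duplicate-free, so by Corollary~\ref{cor:width} it suffices to compute the width of $P_{G \gprod H}$. By Lemma~\ref{lem:prod-dup}, the strict relations of this poset are exactly the pairs $(g,h) < (g',h')$ with $g'$ a leaf of $G$ whose unique neighbour is $g$ and $h'$ a leaf of $H$ whose unique neighbour is $h$. The entire argument is built on this description.

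First I would show that $P_{G \gprod H}$ has height at most $2$. Indeed, a three-element chain $x < y < z$ would force both coordinates of $y$ to be leaves (since $y$ is a weak duplicate of $x$) and simultaneously each coordinate of $y$ to be the unique neighbour of the corresponding coordinate of $z$, which is itself a leaf (since $z$ is a weak duplicate of $y$). A leaf adjacent to a leaf forces that factor to equal $K_2$, so such a chain would require both $G = K_2$ and $H = K_2$, contrary to hypothesis. Hence no chain has three elements; in particular no element is both above and below another, so the minimal and maximal elements fall into disjoint classes.

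In a height-$2$ poset every chain partition uses only singletons and comparable pairs, so the minimum number of chains is $\#V(G \gprod H) - \nu$, where $\nu$ is the maximum number of pairwise disjoint comparable pairs, i.e.\ a maximum matching in the comparability graph. The main computation is to show $\nu = \ell'(G)\cdot\ell'(H)$. The key point is that each maximal element $(g',h')$ is a pair of leaves and, by the description above, lies above \emph{exactly one} element, namely the pair of their respective neighbours; thus on the maximal side every vertex of the comparability graph has degree $1$, making that graph a disjoint union of stars centred at the minimal elements. Therefore $\nu$ counts precisely the minimal elements lying below at least one maximal element, i.e.\ the pairs $(g,h)$ for which $g$ is adjacent to some leaf of $G$ and $h$ is adjacent to some leaf of $H$. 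Since two leaves are duplicates exactly when they share their unique neighbour, the number of vertices of $G$ adjacent to a leaf is exactly $\ell'(G)$, and likewise for $H$; as the two choices are independent, $\nu = \ell'(G)\ell'(H)$, and assembling the pieces yields $\chi_N(G \gprod H) = \#V(G \gprod H) - \ell'(G)\ell'(H)$.

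I expect the main obstacle to be the degenerate case in which exactly one factor, say $G$, equals $K_2$: then both vertices of $G$ are leaves, so a single vertex can appear as a leaf in one pair and as the neighbour of a leaf in another, threatening the disjointness of the minimal and maximal classes. I would need to confirm that the height-$2$ argument and the ``exactly one element below each maximal element'' claim survive this overlap; both in fact use only that $G$ and $H$ are not \emph{simultaneously} $K_2$, since a pair $(g,h)$ is maximal precisely when its $H$-coordinate is a leaf and minimal precisely when its $H$-coordinate is adjacent to a leaf, and these cannot both hold because $H \neq K_2$. Once the star decomposition is verified in this case, the count $\ell'(G)\ell'(H)$ stands. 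The closing ``in particular'' is then immediate: if a factor has minimum degree at least $2$ it has no leaves, so $\ell'(G)\ell'(H) = 0$.
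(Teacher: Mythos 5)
Your proposal is correct and follows essentially the same route as the paper: both rest on Lemma~\ref{lem:prod-dup} to identify the comparable pairs as (neighbour-pair, leaf-pair), rule out three-element chains (equivalently, colour classes of three vertices) using the hypothesis that not both factors are $K_2$, and then count a maximum family of pairwise disjoint comparable pairs as $\ell'(G)\cdot\ell'(H)$ via the correspondence between duplicate classes of leaves and their support vertices. Your matching-in-the-comparability-graph and star-decomposition language is just a more explicit rendering of the paper's ``largest subset of $L$ of pairwise disjoint elements,'' with the added virtue that you spell out the height-two claim and the degenerate case where exactly one factor is $K_2$, which the paper compresses into the single remark that duplicate-freeness forbids larger colour classes.
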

\begin{proof}
    By Lemma~\ref{lem:prod-dup}, $\{(g, h), (g', h')\}$ is a nested independent set of
    $G \gprod H$ if and only if $N_G(g) = \{g'\}$ and $N_H(h) = \{h'\}$.  Since $G \gprod H$
    is duplicate-free by Corollary~\ref{cor:prod-dupfree}, no colour class can contain
    more than two vertices.

    Let $L$ be the set of all colour classes of two vertices.  Every nested colouring of $G \gprod H$
    consists of subset of $L$ of pairwise disjoint elements together with singleton sets of the
    remaining vertices.  In particular, $\chi_N(G \gprod H)$ is $\#V(G \gprod H)$ minus the
    largest subset of $L$ that consists of pairwise disjoint elements.

    Since the weak duplicate vertex of each element of $L$ is unique, selection of a subset of
    $L$ of pairwise disjoint elements depends only on the weakly duplicated vertex of each element
    of $L$.  In particular, if $(g, h)$ is the weak duplicate vertex of an element of $L$, then no
    other element of $L$ with weak duplicate vertex $(i,j)$ such that $[g]_\sim = [i]_\sim$ and
    $[h]_\sim = [j]_\sim$ can be in such a disjoint set.  Thus the largest subset of $L$
    that consists of pairwise disjoint elements is of size $\ell'(G) \cdot \ell'(H)$.
\end{proof}

\begin{example}\label{exa:cube}
    The cube graph $Q_n$ is defined recursively by $Q_1 = K_2$ and $Q_n = Q_{n-1} \gprod K_2$, so $Q_n$
    has no leaves for $n \geq 2$.  Hence $\chi_N(Q_n) = 2^n$ if $n \neq 2$ and $\chi_N(Q_2) = 2$.
    This also follows by Proposition~\ref{pro:regular} since $Q_n$ is $n$ regular and duplicate-free for
    $n \neq 2$ by Corollary~\ref{cor:prod-dupfree}.
\end{example}

\subsection{Strong product}\label{sub:strong-product}~

Let $G$ and $H$ be finite simple graphs.  The \emph{strong product of $G$ and $H$} is the graph
$G \boxtimes H$ with vertex set $V(G) \times V(H)$, where $(g,h)$ is adjacent to the distinct
vertex $(g',h')$ if and only if $g = g'$ or $g$ is adjacent to $g'$ in $G$, and $h = h'$ and
$h$ is adjacent to $h'$ in $H$.  In particular, the open neighbourhood of $(g,h)$
in $G \boxtimes H$ is
\[
    N_{G \boxtimes H}(g,h) = N_G[g] \times N_H[h] \setminus \{(g, h)\}.
\]

Notice that $G \boxtimes K_1$ is isomorphic to $G$, so $\chi_N(G \boxtimes K_1) = \chi_N(G)$.  Moreover,
$(G \dcup G') \boxtimes H = (G \boxtimes H) \dcup (G' \boxtimes H)$.  Thus following Section~\ref{sub:disjoint},
we need only consider finite simple graphs $G$ that are connected and have at least two vertices.

With the exception of $G \boxtimes K_1$, the strong product of connected graphs has no weak duplicate vertices.

\begin{lemma}\label{lem:strong-dup}
    Let $G$ and $H$ be connected finite simple graphs, neither of which is $K_1$.
    The vertices $(g, h)$ and $(g', h')$ are weak duplicates in $G \boxtimes H$ if and only
    if $(g, h) = (g',h')$.
\end{lemma}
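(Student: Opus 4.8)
The plan is to show that weak duplication in $G \boxtimes H$ forces equality by exploiting the closed-neighbourhood structure $N_{G \boxtimes H}(g,h) = N_G[g] \times N_H[h] \setminus \{(g,h)\}$. Unlike the Cartesian product (Lemma~\ref{lem:prod-dup}), where the neighbourhood uses open neighbourhoods and a ``cross'' shape, here the product is over \emph{closed} neighbourhoods forming a full box, and this richer structure is what eliminates nontrivial weak duplicates. First I would suppose that $(g,h)$ is a weak duplicate of the distinct vertex $(g',h')$, i.e.\ $N_{G \boxtimes H}(g,h) \subset N_{G \boxtimes H}(g',h')$, and then derive a containment of the boxes themselves, namely that $N_G[g] \times N_H[h]$ is (essentially) contained in $N_G[g'] \times N_H[h']$.

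The key technical step is to recover the containment of closed neighbourhoods from the containment of punctured boxes. Removing the single point $(g,h)$ from the box $N_G[g] \times N_H[h]$ does not lose containment information provided each factor has at least two elements; since $G$ and $H$ are connected and not $K_1$, every vertex has at least one neighbour, so $\#N_G[g] \geq 2$ and $\#N_H[h] \geq 2$, and the punctured box still determines its two ``shadows.'' Concretely, I would argue that $N_G[g] \subset N_G[g']$ and $N_H[h] \subset N_H[h']$. For the $G$-coordinate, take any $a \in N_G[g]$; since $\#N_H[h] \geq 2$ I can choose $b \in N_H[h]$ with $(a,b) \neq (g,h)$, so $(a,b)$ lies in the punctured box and hence in $N_{G \boxtimes H}(g',h') \subseteq N_G[g'] \times N_H[h']$, giving $a \in N_G[g']$. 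Symmetrically $N_H[h] \subset N_H[h']$.

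Now I would translate these closed-neighbourhood containments back into a contradiction. The relation $N_G[g] \subset N_G[g']$ means $g'$ dominates $g$ in the sense of Remark~\ref{rem:dilworth}, and in particular $g \in N_G[g']$ and $g' \in N_G[g]$, so either $g = g'$ or $g$ and $g'$ are adjacent; the analogous statement holds for $h, h'$. The crux is then to show that strict containment or adjacency cannot persist in both coordinates simultaneously with $(g,h) \neq (g',h')$. I expect the main obstacle to be ruling out the mixed cases where, say, $g = g'$ but $h \neq h'$ (so $h,h'$ adjacent): here one must check the neighbourhood inclusion \emph{fails} by producing a specific neighbour of $(g,h)$ not adjacent to $(g',h')$, using that $N_H[h] \subsetneq N_H[h']$ or $N_H[h'] \subsetneq N_H[h]$ would contradict weak duplication applied in the reverse coordinate.

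The cleanest route to dispatch these cases is to observe that weak duplication is a one-sided containment, so from $N_G[g] \subset N_G[g']$ and $N_H[h] \subset N_H[h']$ one gets $\#N_G[g] \leq \#N_G[g']$ and $\#N_H[h] \leq \#N_H[h']$; if $(g,h) \neq (g',h')$ then at least one of these containments is strict as sets (since equality of both closed neighbourhoods together with the box structure would itself force a contradiction via the removed point $(g,h)$ landing outside $N_{G \boxtimes H}(g',h')$). I would then locate a witness vertex: pick $x \in N_G[g'] \setminus N_G[g]$ or $y \in N_H[h'] \setminus N_H[h]$ and build a vertex adjacent to $(g',h')$ but not to $(g,h)$, contradicting the assumed inclusion of neighbourhoods. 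Careful bookkeeping of the removed diagonal point $(g,h)$ versus $(g',h')$ throughout is the place where errors are easiest, so I would handle the ``diagonal'' overlaps explicitly rather than appealing to symmetry.
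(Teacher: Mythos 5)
Your second paragraph is sound, and it is essentially the paper's opening move: since $G$ and $H$ are connected and neither is $K_1$, projecting the punctured box $N_G[g] \times N_H[h] \setminus \{(g,h)\}$ onto each factor loses nothing, so the assumed inclusion $N_{G \boxtimes H}(g,h) \subseteq N_{G \boxtimes H}(g',h')$ yields $N_G[g] \subseteq N_G[g']$ and $N_H[h] \subseteq N_H[h']$, and in particular $g' \in N_G[g]$ and $h' \in N_H[h]$ (the paper extracts only these point memberships, which is all that is needed). The genuine gap is your endgame. The witness argument in your final paragraph is logically backwards: a vertex adjacent to $(g',h')$ but not to $(g,h)$ does \emph{not} contradict the inclusion $N_{G \boxtimes H}(g,h) \subseteq N_{G \boxtimes H}(g',h')$ --- such vertices are exactly what one expects when the containment is strict, and weak duplication is a one-sided condition. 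To refute it you must exhibit a neighbour of $(g,h)$ that fails to be a neighbour of $(g',h')$; neither your strictness/equality split nor the mixed cases of your third paragraph ever produce one.

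The needed witness is $(g',h')$ itself, and it works in every case at once. Since $g' \in N_G[g]$, $h' \in N_H[h]$, and $(g',h') \neq (g,h)$, we have $(g',h') \in N_G[g] \times N_H[h] \setminus \{(g,h)\} = N_{G \boxtimes H}(g,h)$. The assumed inclusion then forces $(g',h') \in N_{G \boxtimes H}(g',h')$, i.e., $(g',h')$ is adjacent to itself, which is absurd because the open neighbourhood of a vertex in a simple graph excludes that vertex. This single observation is the paper's entire conclusion and renders all of your case analysis unnecessary. Your parenthetical about ``the removed point landing outside'' gestures at this idea but misidentifies the point and restricts it to the equality case: what matters is that the removed point of the \emph{target} neighbourhood, namely $(g',h')$, is a neighbour of $(g,h)$ yet cannot be a neighbour of itself, and this holds regardless of whether the closed-neighbourhood containments are strict or are equalities.
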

\begin{proof}
    Since $G$ and $H$ are connected, $N_G(g) \neq \emptyset \neq N_H(h)$.

    Suppose $(g, h)$ is a weak duplicate of $(g', h')$.  This implies that
    $\{g\} \times N_H(h) \subset N_{G \boxtimes H}(g', h')$, and so $g \in N_G[g']$, i.e.,
    $g' \in N_G[g]$.  By symmetry, we also have $h' \in N_H[h]$.  If $(g, h) \neq (g', h')$,
    then $(g', h') \in N_{G \boxtimes H}(g, h) \subset N_{G \boxtimes H}(g', h')$, which is
    absurd.
\end{proof}

Thus the nested chromatic number of the strong product is the number of vertices of the product.

\begin{proposition}\label{pro:strong}
    If $G$ and $H$ are connected finite simple graphs, neither of which is $K_1$, then
    $\chi_N(G \boxtimes H) = \#V(G) \cdot \#V(H)$.
\end{proposition}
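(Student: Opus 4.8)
The plan is to show that $G \boxtimes H$ is duplicate-free and then invoke the characterisation from Proposition~\ref{pro:regular}, or rather the more general fact that $\chi_N(L) = \#V(L)$ exactly when $L$ is duplicate-free. Indeed, recall that a graph $L$ satisfies $\chi_N(L) = \#V(L)$ if and only if no two distinct vertices are duplicates, equivalently if the open neighbourhoods form a Sperner family (Remark~\ref{rem:sperner}); but here we can say something stronger, namely that there are no \emph{weak} duplicates at all, which certainly rules out duplicates.

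The key step is already done for us in Lemma~\ref{lem:strong-dup}: under the hypotheses that $G$ and $H$ are connected and neither is $K_1$, the lemma asserts that the only way $(g,h)$ can be a weak duplicate of $(g',h')$ in $G \boxtimes H$ is the trivial case $(g,h) = (g',h')$. In particular, no two \emph{distinct} vertices of $G \boxtimes H$ are weak duplicates of one another, and so certainly no two distinct vertices are duplicates. Hence $G \boxtimes H$ is duplicate-free.

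First I would invoke Lemma~\ref{lem:strong-dup} to conclude that $G \boxtimes H$ has no nontrivial weak duplicate pairs. Since a duplicate is in particular a weak duplicate, this shows $G \boxtimes H$ is duplicate-free. Then I would apply Proposition~\ref{pro:regular}'s first assertion in the form used throughout this section: a finite simple graph $L$ is duplicate-free if and only if $\chi_N(L) = \#V(L)$. Applying this to $L = G \boxtimes H$ yields $\chi_N(G \boxtimes H) = \#V(G \boxtimes H) = \#V(G) \cdot \#V(H)$, the last equality holding because the vertex set of the strong product is the Cartesian product $V(G) \times V(H)$.

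I anticipate no genuine obstacle here, since the substantive work has been front-loaded into Lemma~\ref{lem:strong-dup}; the proof is essentially a one-line deduction. The only point requiring a word of care is the clean statement of the equivalence ``duplicate-free $\iff \chi_N = \#V$.'' The first paragraph of the proof of Proposition~\ref{pro:regular} notes that for a $d$-regular graph a weak duplicate is automatically a duplicate, but the equivalence between being duplicate-free and having $\chi_N(L) = \#V(L)$ holds in general (it is the content of the opening sentence of Section~\ref{sub:regular}, and follows from Remark~\ref{rem:sperner} together with the fact that a weak-duplicate-free graph is a fortiori duplicate-free). Since Lemma~\ref{lem:strong-dup} gives the absence of all nontrivial weak duplicates, the conclusion follows immediately.
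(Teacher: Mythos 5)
Your core idea is the right one, and it is the paper's own: Lemma~\ref{lem:strong-dup} does all the work, and the proposition is an immediate consequence (the paper in fact offers no separate proof beyond the sentence introducing the proposition). However, the final deduction as you have written it rests on a false principle. The equivalence ``$L$ is duplicate-free if and only if $\chi_N(L) = \#V(L)$'' does \emph{not} hold for general finite simple graphs: the path $P_4$ is duplicate-free (all four open neighbourhoods are distinct), yet $\chi_N(P_4) = 2$, because $N_{P_4}(1) = \{2\} \subset \{2,4\} = N_{P_4}(3)$ makes $\{1,3\}$ a nested independent set. What is true in general is only one direction ($\chi_N(L) = \#V(L)$ forces duplicate-freeness). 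Proposition~\ref{pro:regular} upgrades this to an equivalence \emph{only} under the $d$-regularity hypothesis, which is exactly what makes every weak duplicate an honest duplicate; and $G \boxtimes H$ need not be regular, so that proposition cannot be invoked here. The opening sentence of Section~\ref{sub:regular}, which you cite as a general fact, should be read as an informal lead-in to the regular case. In short, by passing from ``no weak duplicates'' down to the strictly weaker ``duplicate-free,'' you discard precisely the information you need, and then attempt to recover it with an implication that fails in general.

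The repair is immediate, and you already have every ingredient in hand: Lemma~\ref{lem:strong-dup} says no two \emph{distinct} vertices of $G \boxtimes H$ are weak duplicates, i.e.\ no open neighbourhood is contained in another, i.e.\ the open neighbourhoods form a Sperner family. Remark~\ref{rem:sperner} (equivalently, the observation that every nested independent set must then be a singleton) gives $\chi_N(G \boxtimes H) = \#V(G \boxtimes H) = \#V(G) \cdot \#V(H)$ directly, with no mention of duplicate-freeness or of Proposition~\ref{pro:regular} at all. With that one substitution your argument coincides with the paper's.
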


\subsection{Composition}\label{sub:composition}~

Let $G$ and $H$ be finite simple graphs.  The \emph{composition} (or \emph{lexicographic product})
\emph{of $G$ and $H$} is the graph $G[H]$ with vertex set $V(G) \times V(H)$, where $(g,h)$ is adjacent
to $(g',h')$ if and only if either $g$ is adjacent to $g'$ in $G$ or $g = g'$ and $h$ is adjacent to $h'$ in $H$.
In particular, the open neighbourhood of $(g,h)$ in $G[H]$ is
\[
    N_{G[H]}(g,h) = N_G(g) \times V(H) \dcup \{g\} \times N_H(h).
\]
Clearly, composition is non-commutative, in general.

The weak duplicates in the composition come from weak duplicates of the operands.

\begin{lemma}\label{lem:comp-dup}
    Let $G$ and $H$ be finite simple graphs.  The vertex $(g,h)$ is a weak duplicate of the distinct
    vertex $(g', h')$ in $G[H]$ if and only if either $g = g'$ and $h$ is a weak duplicate of $h'$ in $H$
    or $g$ is a weak duplicate of $g'$ in $G$ and $h$ is an isolated vertex in $H$.
\end{lemma}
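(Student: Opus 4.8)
The plan is to read everything off the neighbourhood formula $N_{G[H]}(g,h) = N_G(g) \times V(H) \cup \{g\} \times N_H(h)$ and to decide, for distinct vertices $(g,h)$ and $(g',h')$, exactly when $N_{G[H]}(g,h) \subset N_{G[H]}(g',h')$, i.e.\ when $(g,h)$ is a weak duplicate of $(g',h')$. The natural split is on whether $g = g'$. The backward implication will be a direct substitution into the formula, so the substance lies in the forward implication, which I would organise by the same case split.

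First I would dispose of the case $g = g'$. Here both neighbourhoods share the block $N_G(g) \times V(H)$, so the inclusion reduces to asking where the points of $\{g\} \times N_H(h)$ can land on the right. Since $G$ is simple, $g \notin N_G(g)$, so no point with first coordinate $g$ lies in $N_G(g') \times V(H) = N_G(g) \times V(H)$; such a point can only lie in $\{g\} \times N_H(h')$. Hence the inclusion is equivalent to $N_H(h) \subset N_H(h')$, which is precisely the first alternative (and $h \neq h'$ by distinctness).

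The harder case is $g \neq g'$, where I expect the main obstacle to be ruling out adjacency of $g$ and $g'$. The key claim is that the inclusion forces $g \notin N_G(g')$, and I would argue this by contradiction: if $g \in N_G(g')$ then, by symmetry of adjacency, $g' \in N_G(g)$, so the whole fibre $\{g'\} \times V(H)$ sits inside the left-hand block $N_G(g) \times V(H)$ and must be covered on the right; but a point $(g', y)$ cannot lie in $N_G(g') \times V(H)$ (again $g' \notin N_G(g')$), so it must lie in $\{g'\} \times N_H(h')$, forcing $y \in N_H(h')$ for every $y \in V(H)$, i.e.\ $N_H(h') = V(H)$, which is impossible since $h' \notin N_H(h')$. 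With $g \notin N_G(g')$ established, the points of $\{g\} \times N_H(h)$ on the left have no possible image on the right, so $N_H(h) = \emptyset$ and $h$ is isolated; finally, with that block gone, the inclusion of $N_G(g) \times V(H)$ together with the non-adjacency $g' \notin N_G(g)$ gives $N_G(g) \subset N_G(g')$, the second alternative.

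For the converse I would simply substitute each alternative into the neighbourhood formula: if $g = g'$ and $N_H(h) \subset N_H(h')$ then only the second block grows, giving the inclusion; and if $N_G(g) \subset N_G(g')$ with $h$ isolated then the left neighbourhood is just $N_G(g) \times V(H) \subset N_G(g') \times V(H)$, which sits inside the right neighbourhood. The only bookkeeping to keep in mind throughout is the distinctness of $(g,h)$ and $(g',h')$, which in the first alternative corresponds to $h \neq h'$; this is routine rather than an obstacle.
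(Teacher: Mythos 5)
Your proof is correct and takes essentially the same approach as the paper: both arguments read the claim directly off the formula $N_{G[H]}(g,h) = N_G(g) \times V(H) \cup \{g\} \times N_H(h)$, split on whether $g = g'$, and hinge on the same contradiction that adjacency of $g$ and $g'$ would force $V(H) \subset N_H(h')$, impossible since $h' \notin N_H(h')$. The only difference is organisational—the paper splits the $g \neq g'$ case on whether $h$ is isolated, while you first rule out adjacency and then deduce that $h$ must be isolated—which is an equivalent rearrangement of the same argument (and in fact spells out a small ``only if'' step the paper leaves implicit).
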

\begin{proof}
    Suppose $g = g'$.  This implies that $(g,h)$ is a weak duplicate of $(g', h')$ if and only
    if $N_H(h) \subset N_H(h')$, i.e., $h$ is a weak duplicate of $h'$ in $H$.

    Assume $g \neq g'$.  Further suppose $h$ is an isolated vertex in $H$.  Thus
    $N_{G[H]}(g, h) = N_G(g) \times V(H)$, and so $(g,h)$ is a weak duplicate of $(g',h')$
    if and only if $N_G(g) \subset N_G(g')$, i.e., $g$ is a weak duplicate of $g'$ in $G$.

    Now suppose $h$ is not an isolated vertex in $H$, and suppose $(g, h)$ is a weak
    duplicate of $(g', h')$.  This implies that $g$ is adjacent to $g'$; hence
    $\{g'\} \times V(H) \subset N_{G[H]}(g', h')$, i.e., $V(H) \subset N_H(h')$,
    which is absurd.
\end{proof}

From this we can derive conditions classifying which compositions are duplicate-free.

\begin{corollary}\label{cor:comp-dupfree}
    Let $G$ and $H$ be finite simple graphs.  The graph $G[H]$ is duplicate-free
    if and only if $H$ is duplicate-free and either
    \begin{enumerate}
        \item $H$ has no isolated vertices, or
        \item $H$ has an isolated vertex and $G$ is duplicate-free.
    \end{enumerate}
\end{corollary}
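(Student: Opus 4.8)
The plan is to derive the classification directly from Lemma~\ref{lem:comp-dup} by analysing exactly when $G[H]$ contains a pair of distinct \emph{duplicate} vertices, since $G[H]$ is duplicate-free precisely when no such pair exists. Recalling that two vertices are duplicates exactly when each is a weak duplicate of the other, I would apply Lemma~\ref{lem:comp-dup} in both directions to a putative duplicate pair $(g,h) \neq (g',h')$, splitting into the two cases permitted by the lemma's disjunction according to whether $g = g'$ or $g \neq g'$.

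First I would handle the case $g = g'$. Here Lemma~\ref{lem:comp-dup} forces $h$ to be a weak duplicate of $h'$, and symmetrically $h'$ a weak duplicate of $h$, so $h$ and $h'$ are distinct duplicates of $H$. Conversely, any pair of distinct duplicates $h, h'$ of $H$ yields the distinct duplicate pair $(g,h), (g,h')$ of $G[H]$ for any $g \in V(G)$. Thus $G[H]$ has a duplicate pair with equal first coordinate if and only if $H$ is not duplicate-free.

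Next I would treat the case $g \neq g'$. Now Lemma~\ref{lem:comp-dup} forces $g$ to be a weak duplicate of $g'$ with $h$ isolated in $H$, and symmetrically $g'$ a weak duplicate of $g$ with $h'$ isolated; hence $g, g'$ are distinct duplicates of $G$ and $H$ has an isolated vertex. Conversely, if $G$ has distinct duplicates $g, g'$ and $H$ has an isolated vertex $h_0$, then $(g, h_0)$ and $(g', h_0)$ are distinct duplicates of $G[H]$, so it suffices to take $h = h' = h_0$. Thus a duplicate pair with distinct first coordinates exists if and only if $G$ is not duplicate-free and $H$ has an isolated vertex.

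Combining the two cases, $G[H]$ is duplicate-free if and only if neither situation occurs, that is, $H$ is duplicate-free and either $G$ is duplicate-free or $H$ has no isolated vertex. This is exactly the stated condition: when $H$ has no isolated vertices we land in case~(i), and when $H$ has an isolated vertex the remaining requirement reduces to $G$ being duplicate-free, which is case~(ii). The only real care needed is in the bookkeeping for the $g \neq g'$ case --- namely, observing that mutual weak duplication collapses to duplication in each coordinate, and that a single isolated vertex of $H$ already suffices to build the offending pair; the rest is a routine rephrasing of the resulting logical condition.
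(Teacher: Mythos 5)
Your proposal is correct and follows essentially the same route as the paper: a case split on whether the first coordinates agree, applying Lemma~\ref{lem:comp-dup} in both directions to characterise duplicate pairs in $G[H]$, and then combining the two cases into the stated condition. The paper's proof is just a terser version of yours, stating the two biconditionals and leaving the explicit constructions and the final logical rearrangement implicit.
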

\begin{proof}
    Let $(g, h)$ and $(g', h')$ be distinct vertices of $G[H]$.

    Suppose $g = g'$.  By Lemma~\ref{lem:comp-dup}, $(g,h)$ and
    $(g', h')$ are duplicates in $G[H]$ if and only if $h$ and $h'$ are duplicates in $H$.

    Now suppose $g \neq g'$.  By Lemma~\ref{lem:comp-dup}, $(g,h)$
    and $(g', h')$ are duplicates in $G[H]$ if and only if $h$ and $h'$ are isolated
    vertices in $H$ and $g$ and $g'$ are duplicates in $G$.
\end{proof}

Further, we can bound the nested chromatic number of a graph composition, and equality
holds when the secondary graph has no isolated vertices.

\begin{proposition}\label{pro:comp}
    If $G$ and $H$ are finite simple graphs, then
    \[
        \chi_N(G[H]) \leq \#V(G) \cdot \chi_N(H).
    \]

    Moreover, equality holds if $H$ has no isolated vertices.
\end{proposition}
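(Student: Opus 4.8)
The plan is to prove the upper bound by an explicit construction and then, under the stated hypothesis, to match it with a lower bound; Lemma~\ref{lem:comp-dup} is the workhorse for both directions.

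For the upper bound I would fix an optimal nested colouring $C_1 \ddd C_k$ of $H$ with $k = \chi_N(H)$ and spread it across the fibres of the composition. Explicitly, for each $g \in V(G)$ and each $1 \leq i \leq k$ form the block $\{g\} \times C_i$. These $\#V(G) \cdot k$ blocks clearly partition $V(G) \times V(H) = V(G[H])$. Each block is independent, since two vertices $(g,h),(g,h')$ with $h \neq h'$ in a common $C_i$ are adjacent in $G[H]$ only if $h$ is adjacent to $h'$ in $H$, which fails as $C_i$ is independent. Each block is also nested: applying the $g = g'$ case of Lemma~\ref{lem:comp-dup}, $(g,h)$ is a weak duplicate of $(g,h')$ precisely when $h$ is a weak duplicate of $h'$ in $H$, so the nesting of $C_i$ in $H$ transfers verbatim to $\{g\} \times C_i$. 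This yields a nested colouring of $G[H]$ with $\#V(G) \cdot \chi_N(H)$ classes, giving the bound.

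For the equality statement I would assume $H$ has no isolated vertices, which kills the second alternative in Lemma~\ref{lem:comp-dup}: now $(g,h)$ is a weak duplicate of a distinct vertex $(g',h')$ only if $g = g'$. Consequently any two vertices comparable under the weak duplicate preorder share their first coordinate, so every nested independent set---in particular every colour class of any nested colouring of $G[H]$---is contained in a single fibre $\{g\} \times V(H)$. Since this fibre induces a copy of $H$ (two of its vertices $(g,h),(g,h')$ being adjacent iff $h$ and $h'$ are adjacent in $H$) and, by Proposition~\ref{pro:induced-subgraph}, the classes lying in that fibre restrict to a nested colouring of it, each fibre must receive at least $\chi_N(H)$ classes. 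Summing over the $\#V(G)$ disjoint fibres gives $\chi_N(G[H]) \geq \#V(G) \cdot \chi_N(H)$, and combined with the upper bound this forces equality.

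The argument is largely bookkeeping once Lemma~\ref{lem:comp-dup} is in hand; the one place demanding care is the lower bound's claim that colour classes cannot straddle fibres. This rests on the fact that membership in a nested independent set forces pairwise comparability in the preorder, and it is exactly here that the no-isolated-vertices hypothesis is essential---without it, a vertex $g$ that is a weak duplicate of $g'$ in $G$ together with an isolated $h$ in $H$ would let a single class span two fibres, and the lower bound can genuinely fail.
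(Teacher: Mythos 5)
Your proposal is correct and follows essentially the same route as the paper: the identical fibre-wise construction $\{g\} \times C_i$ for the upper bound, and Lemma~\ref{lem:comp-dup} (with the no-isolated-vertices hypothesis killing the cross-fibre alternative) to confine every nested colour class to a single fibre for the reverse inequality. Your explicit appeal to Proposition~\ref{pro:induced-subgraph} and the fact that each fibre induces a copy of $H$ merely spells out what the paper leaves implicit in its final sentence.
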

\begin{proof}
    Let $\mC$ be a nested colouring $C_1 \ddd C_k$ of $H$.  For each
    $g \in V(G)$ and for $1 \leq i \leq k$, set $C_{i,g} = \{g\} \times C_i$.
    By Lemma~\ref{lem:comp-dup}, $C_{i,g}$ is a nested independent set of $G[H]$, and so the family
    $C_{i,g}$ forms a nested colouring of $G[H]$.  Hence $\chi_N(G[H]) \leq \#V(G) \cdot \chi_N(H)$.

    Assume $H$ has no isolated vertices.  If $\{(g_1,h_1), \ldots, (g_t,h_t)\}$ is a nested independent
    set of $G[H]$, then by Lemma~\ref{lem:comp-dup} $g_1 = \cdots = g_t$ and $\{h_1, \ldots, h_t\}$
    forms a nested independent set of $H$.  Thus any nested colouring of $G[H]$ is of the form
    described in the first paragraph, and so $\chi_N(G[H]) = \#V(G) \cdot \chi_N(H)$.
\end{proof}

We suspect the following question has a negative answer.

\begin{question}\label{que:comp}
    Does there exist a pair of finite simple graphs $G$ and $H$ such that
    $\chi_N(G[H]) < \#V(G) \cdot \chi_N(H)$?
\end{question}

\subsection{Monotonicity}\label{sub:monotone}~

Recall that a graph property is \emph{monotone decreasing} (\emph{monotone increasing}, respectively)
if it is preserved under deletion (respectively, addition) of edges.  For example,
removing an edge can only decrease the chromatic number of a graph, so being $k$-colourable
is a monotone decreasing graph property.  However, having a nested $k$-colouring is neither
monotone decreasing or increasing.  To see this, we use three of the graph products discussed above.

Let $G$ and $H$ be finite simple graphs, and suppose $\chi_N(H) < \#V(H)$.  By construction,
we know that $G \times H$ is $G \boxtimes H$ with edges removed and $G \boxtimes H$ is likewise
$G[H]$ with edges removed.  That is,
\[
    E(G \times H) \subset E(G \boxtimes H) \subset E(G[H]) \subset E(K_{\#V(G) \cdot \#V(H)}).
\]
However, by Propositions~\ref{pro:dirprod} and~\ref{pro:comp}, both
$\chi_N(G \times H)$ and $\chi_N(G[H])$ are at most $\#V(G) \cdot \chi_N(H) < \#V(G) \cdot \#V(H)$.  Hence
using Proposition~\ref{pro:strong} we have that
\[
    \chi_N(G \times H) < \chi_N(G \boxtimes H) > \chi_N(G[H]) < \chi_N(K_{\#V(G) \cdot \#V(H)}).
\]

\section{On the existence of graphs}\label{sec:exists}

Given integers $c$ and $n$ such that $1 \leq c \leq n$, it is known that there
exists a finite simple graph $G$ on $n$ vertices with $\chi(G) = c$.  We
show that if we are also given an integer $s$ such that $1 \leq c \leq s \leq n$, then
$G$ can be chosen so that $\chi_N(G) = s$ for all but a few specific cases.

For fixed $n \geq 2$, the case when $c \in \{1, n-1, n\}$ was handled in Lemma~\ref{lem:1-n1-n}.
The one other infinite case is that there does not exist a bipartite graph with nested
chromatic number $3$.

\begin{lemma}\label{lem:no-bip-3}
    If $G$ is a bipartite graph, then $\chi_N(G) \neq 3$.
\end{lemma}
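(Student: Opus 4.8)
The plan is to show that a bipartite graph cannot have nested chromatic number exactly $3$, ruling out this one infinite case. Let $G$ be bipartite. If $G$ has isolated vertices, they can be absorbed into any colour class by Remark~\ref{rem:isolated-vertices} without changing $\chi_N(G)$, so I would first reduce to the case where $G$ has no isolated vertices. If $G$ is disconnected with components $G_1, \ldots, G_t$, then by Corollary~\ref{cor:disjoint-union} we have $\chi_N(G) = \sum_i \chi_N(G_i)$; since each $\chi_N(G_i) \geq 2$ (a nontrivial bipartite component has chromatic number $2$, hence $\chi_N \geq 2$) and the values are additive, $\chi_N(G) = 3$ would force exactly one component with $\chi_N = 3$ and the rest trivial, or a sum that cannot equal $3$ from pieces each at least $2$. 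In fact two or more nontrivial components give $\chi_N \geq 4$, so it suffices to treat the connected case.

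So reduce to $G$ connected, bipartite, with no isolated vertices, and suppose for contradiction that $\chi_N(G) = 3$. The key idea is a dichotomy based on Theorem~\ref{thm:bip}: either $G$ is colour-nested, meaning $\chi_N(G) = \chi(G) = 2$, or $G$ is \emph{not} colour-nested. A connected nontrivial bipartite graph has $\chi(G) = 2$, so colour-nested would give $\chi_N(G) = 2 \neq 3$. Hence $G$ is not colour-nested, i.e.\ no nested colouring achieves the chromatic number $2$. The plan is then to argue that any nested colouring of a bipartite graph that fails to be a $2$-colouring must in fact use at least $4$ colours, never exactly $3$. I would examine a hypothetical nested $3$-colouring $C_1 \dcup C_2 \dcup C_3$ and use the bipartition $V(G) = X \dcup Y$ together with the nestedness condition from Proposition~\ref{pro:clique-exchange}(iii): within each colour class the vertices are linearly ordered by inclusion of open neighbourhoods, and all neighbourhoods of $X$-vertices lie in $Y$ and vice versa.

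The main obstacle will be the crossover structure: a single colour class may contain vertices from both sides $X$ and $Y$, and the nesting condition forces strong containments between $X$-neighbourhoods (subsets of $Y$) and $Y$-neighbourhoods (subsets of $X$). The crucial observation to exploit is that if a colour class contains both an $X$-vertex $x$ and a $Y$-vertex $y$ that are linearly comparable under weak duplication, then one of $N_G(x) \subset N_G(y)$ or $N_G(y) \subset N_G(x)$ holds, but these neighbourhoods live in \emph{disjoint} vertex sets $Y$ and $X$; the only way one can contain the other is if the smaller is empty, i.e.\ one of the two is an isolated vertex. Since we have removed isolated vertices, each colour class is therefore contained entirely within one side of the bipartition. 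This means every nested colouring restricts to a nested colouring of $G[X]$ (the induced edgeless graph on $X$) and of $G[Y]$, so the colour classes partition into those covering $X$ and those covering $Y$, with no sharing.

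Once each colour class lies on one side, I would combine all the nested classes covering $X$ into a single class and all those covering $Y$ into another, \emph{if} the vertices of $X$ (respectively $Y$) form a single nested chain; a connected non-colour-nested bipartite graph fails exactly this, so at least one side requires at least two nested classes. Thus the number of classes covering $X$ is some $p \geq 1$ and covering $Y$ is some $q \geq 1$ with $p + q = \chi_N(G)$, and non-colour-nestedness forces $\max\{p,q\} \geq 2$, hence $p+q \geq 3$; but I must rule out $p+q = 3$, i.e.\ the case $\{p,q\} = \{1,2\}$. Here is where I would push hardest: I expect to show that if one side, say $X$, is coverable by a single nested chain (so its open neighbourhoods form a chain in $Y$), then by the symmetry of nestedness in Theorem~\ref{thm:bip} the whole graph would be of the form $G_{a_1,\ldots,a_r;s}$ and thus colour-nested with $\chi_N = 2$, a contradiction. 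Therefore both sides need at least two classes, giving $\chi_N(G) \geq 4$ and excluding the value $3$ entirely.
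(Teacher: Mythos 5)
Your proposal is correct and follows essentially the same route as the paper's proof: reduce to a connected bipartite graph with no isolated vertices, observe that a nested colour class cannot meet both sides of the bipartition (the two neighbourhoods lie in disjoint sets, so containment would force an isolated vertex), conclude that a nested $3$-colouring leaves one side covered by a single nested class, and derive the contradiction that $G$ is then colour-nested with $\chi_N(G) = 2$. The only cosmetic difference is that you outsource the last step to Theorem~\ref{thm:bip}, whereas the paper re-derives it inline by showing each vertex of the other side has neighbourhood an initial segment $\{w_1, \ldots, w_k\}$ of the nested chain, so that side is automatically nested as well.
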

\begin{proof}
    Let $G$ be a bipartite graph, and suppose, without loss of generality (see
    Remark~\ref{rem:isolated-vertices}), that $G$ has no isolated vertices.  Suppose
    $\chi_N(G) \leq 3$. Hence by Corollary~\ref{cor:chi-s-3} we may assume $G$ is
    connected, and so $G$ has a unique proper $2$-colouring $B \dcup W$.

    We may assume without loss of generality that $W$ is a nested independent set of $G$.
    Let $w_1, \ldots, w_t$ be the elements of $W$ ordered such that $N_G(w_{i+1}) \subset N_G(w_i)$.
    Thus for each $b \in B$ there is a $k$ such that $b \in N_G(w_i)$ if and only if $1 \leq i \leq k$,
    i.e., $N_G(b) = \{w_1, \ldots, w_k\}$.  Hence $B$ is also nested, and $\chi_N(G) = 2$.
\end{proof}

We are ready to give the classification.

\begin{theorem}\label{thm:possible-chi-chi-s}
    Let $c$, $s$, and $n$ be integers such that $1 \leq c \leq s \leq n$.  There does not exist
    a finite simple graph $G$ on $n$ vertices with $\chi(G) = c$ and $\chi_N(G) = s$ if
    and only if one of the following conditions holds:
    \begin{enumerate}
        \item $c = 1$ and $s > 1$,
        \item $c = 2$ and $s = 3$,
        \item $c = 2$ and $(n, s)$ is one of $(4,4)$, $(5,5)$, $(6,5)$, and $(7,7)$, or
        \item $c = n-1$ and $s = n$.
    \end{enumerate}

    Moreover, if such a graph $G$ exists, then it may be chosen to be connected.
\end{theorem}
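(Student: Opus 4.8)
The plan is to prove the two directions of the biconditional separately, throughout exploiting the additivity of both invariants under the join, namely $\chi(G \vee H) = \chi(G) + \chi(H)$ and $\chi_N(G \vee H) = \chi_N(G) + \chi_N(H)$ (Proposition~\ref{pro:join}), to reduce a general triple $(n, c, s)$ to one of small chromatic number. Concretely, writing $G = K_{c-2} \vee H$ reduces the realisation of $(n, c, s)$, for $c \geq 2$, to the realisation of the bipartite pair $(n - c + 2,\, s - c + 2)$ by a connected bipartite graph $H$; since adjoining a true duplicate of an existing vertex alters neither $\chi$, $\chi_N$ (Proposition~\ref{pro:de-dup}), connectivity, nor bipartiteness, the genuinely new content lies in a small number of base constructions. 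Observe that this reduction already matches the exceptional list: the obstruction $t = 3$ for bipartite graphs corresponds to $s = c+1$, and the case $c = n-1$, $s = n$ maps to the pair $(3,3)$, which again forces $t = 3$.

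For necessity I would dispatch the infinite families from results already in hand: condition (i) is immediate since $\chi(G) = 1$ forces $E(G) = \emptyset$ and hence $\chi_N(G) = 1$; condition (iv) is exactly the case $\chi(G) = n-1$ of Lemma~\ref{lem:1-n1-n}, which gives $\chi_N(G) = n-1 \neq n$; and condition (ii) is Lemma~\ref{lem:no-bip-3}. For the sporadic list (iii), note that $(4,4)$, $(5,5)$, and $(7,7)$ have $s = n$, so by Remark~\ref{rem:sperner} a realising connected bipartite graph would need its open neighbourhoods to form a Sperner family; splitting along its unique bipartition into classes of sizes $p$ and $q = n - p$, the neighbourhoods on each side form an antichain in a Boolean lattice, forcing $p \leq \binom{q}{\lfloor q/2 \rfloor}$ and $q \leq \binom{p}{\lfloor p/2 \rfloor}$, and one checks that these inequalities, together with connectivity (which excludes the disconnected Sperner realisers such as $2K_2$), leave no room for $n \in \{4,5,7\}$. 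The remaining value $(6,5)$ does not have $s = n$; here a realiser would be a duplicate-free graph whose weak duplicate poset on six elements has width $5$, a very restricted structure that a short case analysis, or the finite search of Remark~\ref{rem:computability}, shows is not realised by any connected bipartite graph on six vertices.

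For sufficiency I would construct a connected realiser for every admissible $(n, c, s)$. The diagonal $s = c$ is handled by the complete multipartite graphs of Corollary~\ref{cor:complete-multi-partite}, padded to $n$ vertices by duplicates. For $s > c$ the join reduction sends the problem to producing connected bipartite graphs of prescribed $\chi_N = t \in \{2\} \cup \{4,5,6,\dots\}$ on a prescribed number of vertices: a star realises $t = 2$, paths give $\chi_N(P_{t+2}) = t$ for $t \geq 4$ (Corollary~\ref{cor:path}), and even cycles give the Sperner values $\chi_N(C_t) = t$ (Corollary~\ref{cor:cycle}), with the vertex count brought up to the target in each case by attaching duplicate vertices. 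The one value the bipartite engine cannot reach is $t = 3$, corresponding to $s = c+1$; for these I would instead reduce through a three-chromatic base, replacing $K_{c-2}$ by $K_{c-3} \vee G_0$, where $G_0$ is a connected graph with $\chi(G_0) = 3$ and $\chi_N(G_0) = 4$ obtained by padding the graph of Example~\ref{exa:first-example}.

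The main obstacle, I expect, is pinning down the finitely many boundary cases so that the construction meets the non-existence results exactly. Two families demand care: the Sperner (that is, $s = n$) constructions of odd order, where even cycles do not suffice and one must instead exhibit a $2$-connected, $C_4$- and diamond-free bipartite graph so that Theorem~\ref{thm:weakly-geodetic} forces $\chi_N = \#V$; and the small-$n$ corners where $c$ and $s$ are both close to $n$, so that neither the clique reduction nor the path and cycle families have room to operate. These low-order cases, together with the sporadic non-existence in (iii), are cleanly settled by the exhaustive verification over small graphs described in Remark~\ref{rem:computability}, and it is this verification that ultimately separates the achievable triples from the four exceptional families.
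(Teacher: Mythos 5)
Your architecture is genuinely different from the paper's: the paper inducts on $n$ from an exhaustively computer-verified base at $n = 8$ (so that every sporadic realizer, bipartite or not, is supplied by the search), and climbs using vertex duplication and dominating-vertex addition, with an explicit cycle-based construction for the stray case $c=2$, $s=n$. Your proposal instead tries to make a single join reduction $G = K_{c-2}\vee H$ with $H$ bipartite carry the whole construction, and this is exactly where it breaks. The claim that ``this reduction already matches the exceptional list'' is false: the bipartite obstructions do not lift to obstructions for $c \geq 3$, because a graph with $\chi(G)=c$ need not decompose as $K_{c-2}$ joined to a bipartite graph. Concretely, the forbidden bipartite pairs $(m,t) \in \{(m,3)\}\cup\{(4,4),(5,5),(6,5),(7,7)\}$ pull back under your reduction to the triples $(c+2,c,c+1)$, $(c+2,c,c+2)$, $(c+3,c,c+3)$, $(c+4,c,c+3)$, $(c+5,c,c+5)$ with $c\geq 3$, none of which is in the exception list and all of which are realizable: $(5,3,4)$ by a triangle with a pendant path of two edges, $(5,3,5)$ by the butterfly (two triangles sharing a vertex), $(6,3,6)$ by two triangles joined by an edge, $(7,3,6)$ by a triangle with a pendant $P_4$, and so on, each joined with $K_{c-3}$ to reach general $c$. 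Your machinery produces nothing for these infinite families: the only patch you supply ($G_0$, padded from Example~\ref{exa:first-example}) addresses $t=3$, and since $G_0$ has at least six vertices and padding only adds vertices, it realizes $s=c+1$ only when $n \geq c+3$, missing $(c+2,c,c+1)$ entirely. These are not ``small-$n$ corners'' absorbable by a finite search, since $n$ grows with $c$; what is needed (and what the paper's base-case check effectively provides) is a stock of non-bipartite three-chromatic base graphs on $5$--$8$ vertices to feed into the join, which your proposal never produces. A smaller instance of the same defect: your bipartite toolkit (stars, paths, even cycles, odd Sperner graphs, all padded by duplicates) contains no connected bipartite graph on $8$ vertices with $\chi_N = 7$ ($P_9$ has too many vertices, padded $C_6$ has $\chi_N = 6$), so even the pair $(8,7)$, hence the family $(c+6,c,c+5)$, is unreached; $C_6$ with a pendant path of two edges would do.

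There is also a gap on the non-existence side. The theorem asserts that \emph{no} graph realizes the triples in (iii), but your Sperner-family argument is carried out only for connected graphs, and you explicitly discard $2K_2$ ``by connectivity''---a hypothesis the statement does not grant you. For $(5,5)$, $(6,5)$, $(7,7)$ the omission is repairable: by Corollary~\ref{cor:disjoint-union} the nested chromatic number is additive over components, and a short check of the possible component sizes shows no disconnected bipartite graph attains those values either. For $(4,4)$ it is not repairable: that same corollary gives $\chi(2K_2)=2$ and $\chi_N(2K_2)=4$, so $2K_2$ \emph{does} realize $(n,c,s)=(4,2,4)$. You ran directly into a conflict between the statement and your proof obligations and resolved it by silently strengthening the hypothesis to connectivity; a sound treatment must either handle disconnected graphs throughout or observe that the exclusion of $(4,4)$ in (iii) is correct only for connected graphs (a tension that the paper's own appeal to checking all $143$ bipartite graphs on $4$--$7$ vertices does not escape).
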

\begin{proof}
    If $c = s = n = 1$, then $G = K_1$.  Suppose $n \geq 2$.  If $c = 1$, $c = n-1$, or $c = n$, then by Lemma~\ref{lem:1-n1-n}
    there exists a finite simple graph $G$ on $n$ vertices with $\chi(G) = c$ and $\chi_N(G) = s$ if and only if $s = c$.
    Hence we may also suppose $2 \leq c \leq n-2$.

    By Lemma~\ref{lem:no-bip-3} if $c = 2$, then $s \neq 3$.  Moreover, checking the $143$ bipartite
    graphs with between $4$ and $7$ vertices shows that if $(n, s)$ is one of $(4,4)$, $(5,5)$, $(6,5)$, and $(7,7)$,
    then there is no finite simple graph $G$ on $n$ vertices with $\chi(G) = c$ and $\chi_N(G) = s$.
    Thus the conditions (i)--(iv) each imply the absence of the desired graph.

    Moreover, checking the $1251$ simple graphs with between $2$ and $7$ vertices, we see that,
    except for the conditions (i)--(iv), the desired (connected) simple graphs do indeed exist.

    To show the presence of the desired graphs in the remaining case, we proceed by induction on the number of vertices $n$.

    \emph{Base case:}  Suppose $n = 8$.  Checking the $12346$ ($11117$ of which are connected) simple
    graphs on $8$ vertices, we see that there exists a (connected) simple graph with $\chi(G) = c$ and $\chi_N(G) = s$
    for $2 \leq c \leq s \leq 6$, with the exception of $c = 2$ and $s = 3$.

    \emph{Inductive step:}  Suppose $n \geq 9$.  By induction, there exists a connected simple graph $G$ on $n-1$ vertices
    with $\chi(G) = c$ and $\chi_N(G) = s$ for $2 \leq c \leq s \leq n-1$, except for $(c,s) = (2,3)$ and $(c,s) = (n-2,n-1)$.
    If we duplicate any vertex of $G$, then the resulting connected graph $G'$ has $n$ vertices, $\chi(G') = \chi(G)$, and
    $\chi_N(G') = \chi(G)$ since the duplicate vertex can always be put in the same colour class as the duplicated vertex.
    If we add a dominating vertex to $G$, then the resulting connected graph $G''$ has $n$ vertices and $\chi(G'') = \chi(G) + 1$.
    Moreover, $\chi_N(G'') = \chi(G) + 1$ by Proposition~\ref{pro:join}.  Together these two operations generate the
    desired (connected) graph for all relevant $c$ and $s$, except $c = 2$ and $s = n$.

    If $n$ is even, then $\chi(C_n) = 2$ and $\chi_N(C_n) = n$, by Corollary~\ref{cor:cycle}.  If $n$ is odd, then consider
    the graph $H$ found by adding the vertex $0$ and the edges $\{0,1\}$ and $\{0,5\}$ to $C_{n-1}$.  Clearly, $H$ is a
    connected simple graph on $n$ vertices.  Moreover, $\chi(H) = 2$, since the partition of the vertices into even and odd
    vertices is a proper $2$-colouring of $H$.  Further still, the neighbourhoods of $H$ are: $N_H(0) = \{1,5\}$,
    $N_H(1) = \{0,n-1,2\}$, $N_H(5) = \{0,4,6\}$, $N_H(n-1) = \{1,n-2\}$, and $N_H(i) = \{i-1,i+1\}$ for $1 < i < n-1$ and $i \neq 5$.
    Thus no two vertices of $H$ are weak duplicates, and so $\chi_N(H) = n$.
\end{proof}

See Remark~\ref{rem:computability} for comments about using computer algebra systems to determine the nested chromatic number
of a finite simple graph.

In Remark~\ref{rem:comp-planar}, we noted that the nested chromatic number for a planar graph need not
be bound above by four, as is the chromatic number.  Indeed, we show that every possible nested chromatic
number can occur for a connected planar graph.

\begin{proposition}\label{pro:planar}
    Let $n \geq 2$.  For $2 \leq k \leq n$, there exists a connected planar simple graph $G$ on
    $n$ vertices with $\chi_N(G) = k$.
\end{proposition}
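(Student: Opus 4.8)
The plan is to exhibit a single explicit family, one graph for each admissible pair $(n,k)$, assembled from a small ``rigid core'' together with pendant leaves. First I would fix, for each $k \geq 2$, a connected planar graph $B_k$ on exactly $k$ vertices with $\chi_N(B_k) = k$: take $B_k = K_k$ when $2 \leq k \leq 4$ (these complete graphs are planar, and $\chi_N(K_k) = k$ by Lemma~\ref{lem:1-n1-n} since $\chi(K_k) = k = \#V(K_k)$), and take $B_k = C_k$ when $k \geq 5$ (the $k$-cycle is planar, and $\chi_N(C_k) = k$ by Corollary~\ref{cor:cycle}). In every case $B_k$ is connected, planar, and duplicate-free with nested chromatic number equal to its order.

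Next I would fix a vertex $v$ of $B_k$ together with a neighbour $w$ of $v$, and let $G$ be obtained from $B_k$ by attaching $n - k$ pendant leaves to $v$. Then $G$ has $n$ vertices, is connected (every leaf is joined to the connected core), and is planar (the pendant vertices may be drawn inside a face incident to $v$). The lower bound is then immediate: $B_k$ is an induced subgraph of $G$, so $\chi_N(G) \geq \chi_N(B_k) = k$ by Proposition~\ref{pro:induced-subgraph}.

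For the matching upper bound I would produce a nested colouring of $G$ with exactly $k$ classes. Each attached leaf $\ell$ has $N_G(\ell) = \{v\}$, so the leaves are mutual duplicates; and since $v \in N_G(w)$, every leaf is a weak duplicate of $w$. Hence the set consisting of all $n - k$ leaves together with $w$ is a nested independent set (the leaves, which are pairwise nonadjacent and nonadjacent to $w$, placed below $w$ in the weak duplicate order). Taking this set as one colour class and each of the remaining $k - 1$ vertices of $B_k$ as its own singleton class gives a nested colouring with $k$ classes, whence $\chi_N(G) \leq k$. Combining the two bounds yields $\chi_N(G) = k$ for every $2 \leq k \leq n$.

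I expect no genuine obstacle, as the argument is uniform; the only point requiring care is the choice of core, since $K_k$ ceases to be planar once $k \geq 5$ and must be replaced by a graph that is simultaneously planar and has nested chromatic number equal to its order, the role the cycle $C_k$ fills by Corollary~\ref{cor:cycle}. I would also verify the two degenerate cases explicitly: when $n = k$ no leaves are attached and $G = B_k$, and when $k = 2$ the graph $G$ is simply the star $K_{1,n-1}$, for which $\chi_N(G) = 2$.
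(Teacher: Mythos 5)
Your proof is correct and is essentially the paper's own argument: the same core ($K_k$ for $2 \leq k \leq 4$, $C_k$ for $k \geq 5$ via Lemma~\ref{lem:1-n1-n} and Corollary~\ref{cor:cycle}), the same attachment of $n-k$ pendant leaves to a single core vertex, the same nested colouring whose one non-singleton class consists of the leaves together with a neighbour $w$ of the attachment vertex, and the same appeal to Proposition~\ref{pro:induced-subgraph} for the lower bound. One cosmetic remark: since the weak duplicate preorder reverses containment, the leaves (being weak duplicates of $w$) sit \emph{above} $w$ in that order rather than below it, but this does not affect the validity of your nesting.
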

\begin{proof}
    Let $G$ be the graph $K_k$ if $2 \leq k \leq 4$, otherwise let $G$ be the graph $C_k$ if $k \geq 5$.
    Then clearly $G$ is a connected planar graph with $\chi_N(G) = k$ by Lemma~\ref{lem:1-n1-n} or
    Corollary~\ref{cor:cycle}, respectively.

    Without loss of generality, let $V(G) = \{1, \ldots, k\}$, and suppose $k-1$ and $k$ are adjacent.
    Modify $G$ by adding $n-k$ new vertices $\{k+1, \ldots, n\}$ and $n-k$ new edges $\{k-1, i\}$, where
    $k+1 \leq i \leq n$, to create the graph $G'$.  Clearly, $G'$ is a connected planar graph, as the new
    vertices are all leaves on the planar graph $G$.  Further still, $\{1\} \ddd \{k-1\} \dcup \{k, \ldots, n\}$
    is a nested colouring of $G'$.  Thus $\chi_N(G') = \chi_N(G) = k$, by Proposition~\ref{pro:induced-subgraph}.
\end{proof}

\begin{acknowledgement}
    The author thanks Benjamin Braun for many helpful comments, including pointing 
    out the references that lead to the inclusion of Section~\ref{sub:topology}.
\end{acknowledgement}


\end{document}